\crefname{equation}{}{}
\crefname{figure}{{\sc Figure}}{{\sc Figure}}
\crefname{subsection}{Subsection}{Subsections}
\newtheorem{theorem}{Theorem}[section]
\newtheorem{proposition}[theorem]{Proposition}
\newtheorem{remark}[theorem]{Remark}
\newtheorem{corollary}[theorem]{Corollary}
\newtheorem{lemma}[theorem]{Lemma}
\newtheorem*{definition*}{Definition}
\def\F{\mathcal{F}}
\def\M{\mathcal{M}}
\def\Fbb{\mathbb{F}}
\def\Rbb{\mathbb{R}}
\def\Cbb{\mathbb{C}}
\def\Nbb{\mathbb{N}}
\def\R{\mathbb{R}}
\def \R{{\mathbb R}}
\def \F{{\mathbb F}}
\def \1{\textbf{1}}
\def\Ebf{\textbf{E}}
\def\SL{SL}
\def\l({\left(}
\def\r){\right)}
\def\lv{\left\vert}
\def\rv{\right\vert}
\def\lo{\left\{}
\def\ro{\right\}}
\def\lV{\left\Vert}
\def\rV{\right\Vert}
\def\lf{\left\lfloor}
\def\rf{\right\rfloor}
\def\rar{\rightarrow}
\begin{document}
\title{Packing sets under finite groups via algebraic incidence structures}
\author{Norbert Hegyv\'{a}ri\thanks{E\"otv\"os University and associated member of Alfréd Rényi Institute, Hungary. Email: hegyvari@renyi.hu}\and Le Quang Hung\thanks{Hanoi University of Science and Technology, Vietnam. Email: hung.lequang@hust.edu.vn}\and Alex Iosevich\thanks{Department of Mathematics, University of Rochester, USA. Email: iosevich@math.rochester.edu}\and Thang Pham\thanks{Institute for Mathematics and Interdisciplinary Sciences, Xidian University. \newline
\hspace*{0.5cm} Email: {\tt thangpham.math@gmail.com}}}
\date{}
\maketitle

\begin{abstract}
Let $G$ be a finite group acting on a vector space $V = \mathbb{F}_p^n$ over a prime field. Given finite sets $S \subset G$ and $E \subset V$, we study the restricted orbit union $S(E) = \bigcup_{g\in S} g(E)$ and establish quantitative lower bounds for $|S(E)|$ in terms of $|S|$, $|E|$, and natural structural conditions. This finite field packing problem has connections to distance geometry, configuration counting, and expanding graphs. For $G = SL_2(\mathbb{F}_p)$ acting on $\mathbb{F}_p^2$, we prove that $$|S(E)| \gg \min\left\lbrace p^2, \frac{|S||E|}{p^2}\right\rbrace,$$ which is sharp. Under geometric non-concentration conditions on $E$ and subgroup-avoidance hypotheses on $S$, we obtain a power-saving improvement of the form $$|S(E)|\gg \min \left\lbrace p^2, ~\max\left\lbrace\frac{|S||E|}{pk}, ~\frac{|S|^{\frac{1}{2}}|E|}{p^{\frac{1-\epsilon}{2}}k^{\frac{1}{2}}}\right\rbrace \right\rbrace,$$ where $k$ bounds the radial multiplicity of $E$. For small sets $|E| \leq p$, we establish optimal bounds using weighted incidence theory. Analogous results are proved for the first Heisenberg group $\mathbb{H}_1(\mathbb{F}_p)$ acting on $\mathbb{F}_p^3$. Our approach reformulates the problem as an incidence question in a bipartite action graph. The proofs combine Fourier analytic techniques, energy estimates, point-line incidence bounds, and area-energy inequalities for skew dot products. The methods extend classical sum-product type problems and incidence theory to noncommutative group actions.
\end{abstract}

\textbf{Keywords}: Packing sets, incidence geometry, expanding maps, finite fields, group actions

\textbf{MSC-2020}: 11T30, 51B05

\tableofcontents
\section{Introduction}

Let $E$ be a Borel set in $\mathbb{R}^n$ and let $S$ be a set of maps from $\mathbb{R}^n$ to $\mathbb{R}^n$. We define 
\begin{align*}
    S(E):=\bigcup_{f\in S}f(E)=\{f(y): f\in S, y\in E\}\subset \R^n.
\end{align*}
The packing problem asks if it is possible for a set of zero $n$-dimensional Lebesgue measure to contain the image $f(E)$ for all $f\in S$. The study of this problem has a well-known history in the literature, for example, there are sets in the plane of zero Lebesgue measure containing  a line segment of unit length in every direction (see \cite{Besicovitch20} and \cite{Besicovitch28}), a circle of radius $r$ for all $r>0$ (see \cite{BesicovitchRado68} and \cite{Kinney68}), or a circle centered at $x$ for all $x$ on a given straight line (see \cite{Talagrand80}). In another direction, the question of finding conditions on $S$ and $E$ such that $S(E)$ has positive Lebesgue measure has also received a lot of attention. Bourgain \cite{Bourgain85} and independently Marstrand \cite{Marstrand87} proved that given a set of circles in the plane, if the centers form a set of positive Lebesgue measure, then the union of circles also has positive Lebesgue measure. Wolff \cite{Wolff97} strengthened this result by showing that if the set of centers has Hausdorff dimension $s$, $0<s\leq 1$, then the dimension of the set of union of circles is at least $1+s$. For the most recent progress, we refer the reader to \cite{Iosevichetal} and references therein. 

Let $\mathbb{F}_p$ be a prime field of order $p$, where $p$ is an odd prime. Let $G$ be a finite group acting on $V=\mathbb{F}_p^n$.
Given finite sets $S\subset G$ and $E\subset V$, define the restricted orbit union set
\[
S(E):=\{gy:\ g\in S,\ y\in E\}=\bigcup_{g\in S} g(E).
\]
In this setting, the packing question translates into quantitative lower bounds for the size of $S(E)$. This paper provides such bounds for $|S(E)|$ in terms of $|S|$, $|E|$, and natural non-concentration conditions.

Beyond its intrinsic expanding flavor, it is worth mentioning that restricted orbit union expansion implies several configuration-counting consequences, including pinned triangle congruence and distance questions. We briefly indicate this connection as follows. When $S$ is a set of rigid-motions, i.e. a set of maps preserving distances, the author of \cite{Pham2024} proved a lower bound on the number of congruence classes of triangles with one fixed side-length in the plane. By setting $S=\{g\}\times E'$, for some rotation $g\in O(n)$ and $E'\subset \mathbb{F}_p^n$, and $S(E)=\{gy-x\colon y\in E, x\in E'\}$, the lower bound $|S(E)|\gg p^n$ implies that the distance set between $gE$ and $E'$, denoted by $\Delta(gE, E')$, covers a positive proportion of all distances in $\mathbb{F}_p$. The authors of \cite{PS} proved that under the conditions $|E|, |E'|\ge p$, for almost every rotation $g$ in the plane $\mathbb{F}_p^2$, $|gE-E'|\gg p^2$, and obtained $|\Delta(gE, E')|\gg p$. Note that the Erd\H{o}s-Falconer distance conjecture in two dimensions corresponds to the case $g=id$. 


\paragraph{Notations:} Throughout the paper, we will write $X \ll_\alpha Y$ if $X \le CY$, where $C>0$ is a constant depending on $\alpha$. If it is clear from the context what $C$ should depend on, we may write only $X \ll Y$. If $X \ll Y $ and $Y\ll X$, we write $X \sim Y$. Furthermore, $X \lesssim Y$ if $X \ll \log_2p\cdot Y$.  

\subsection{The special linear group \texorpdfstring{$G=SL_2(\mathbb{F}_p)$}{SL2(Fp)}}
We first start with some observations.

{\bf Observation 1:} Since each $\theta\in SL_2(\mathbb{F}_p)$ acts bijectively on $\mathbb{F}_p^2$, we have $|S(E)|\ge |E|$. One might hope that $|S(E)|\ge |S|$, which implies $|S(E)|\ge |S|^{\frac{1}{2}}|E|^{\frac{1}{2}}$. However, the estimate $|S(E)|\ge |S|$ might not be true. For example, take $E=\{(0, 1)\}$ and $S$ being a set of matrices $\theta$ in $SL_2(\mathbb{F}_p)$ such that $\theta (0, 1)=(1, 0)$, then, by Lemma \ref{lmmay1}, $S$ can be as large as $\sim p$, and in this case, one has $|S(E)|=1$.

There are sets $S$ and $E$ such that $|S(E)|=|S|^{\frac{1}{2}}|E|^{\frac{1}{2}}$. Indeed, let $A\subset B $ be two subgroups of $\Fbb_p^*$. For each $[x] \in B/A$, fix $x' \in [x]$. Let $S_{[x]}$ be a subset of $\lo \theta \in \SL_2 \l( \Fbb_p \r) \colon \theta (0,1) =(0,x') \ro$ such that $\lv S_{[x]} \rv = \lv B\rv$. Let $E= \lo  (0,a) \colon a\in A \ro$. Then, we have
    \[ S_{[x]} (E) = \{ 0\} \times [x] .\]
    Therefore, if we choose $S = \bigcup_{[x]\in B/A} S_{[x]}  $, we have $\lv S \rv = \lv B\rv\lv B/A\rv$ and
    \[ S (E) = \bigcup_{[x]\in B/A} \{ 0\} \times [x] =\{ 0\} \times B. \]
    Then $\lv S (E)\rv = \lv B\rv = (\lv B\rv \lv B/A\rv)^{\frac{1}{2}} \lv A\rv^{\frac{1}{2}} = \lv S \rv^{\frac{1}{2}} \lv E \rv^{\frac{1}{2}} $.

{\bf Observation 2:} There are sets $S$ and $E$ such that $\lv S(E) \rv \sim \frac{\lv S\rv \lv E \rv}{p^2}>|E|$. Indeed, fix $0<\epsilon<1$, let $E$ be the set of points on the line $\{y=0\}$. Let $S \subset \SL_2 \l( \Fbb_p \r)$ be a set of all matrices $\theta$ such that $\theta (1,0) \in E'$, where $E'$ is a set of $p^{1+\epsilon}$ points on $p^{\epsilon}$ lines passing through the origin. Then, by Lemma \ref{lmmay1}, we have $\lv S\rv \sim p^{2+\epsilon}$. Therefore,
\[ \lv S(E)\rv =p^{1+\epsilon} \sim \frac{p^{2+\epsilon}\cdot p}{p^2} \sim \frac{\lv S\rv \lv E\rv}{p^2}. \]

In the first result, we prove that the lower bound $|S||E|/p^2$ actually holds for all sets $S$ and $E$. 
\begin{theorem}\label{thm2}
    Let $E\subset \mathbb{F}_p^2$ and $S\subset SL_2(\mathbb{F}_p)$. We have 
    \[|S(E)|\gg \min \left\lbrace p^2, ~\frac{|S||E|}{p^2} \right\rbrace.\]
\end{theorem}
It follows from this theorem that if $|S||E|\gg p^4$ then $|S(E)|\gg p^2$. This condition is optimal in the sense that for all $\epsilon>0$ there exist sets $S$ and $E$ with $|S||E|\gg p^{4-\epsilon}$ such that $|S(E)|=o(p^2)$. Indeed, for $\epsilon>0$, by choosing $p$ large enough, we can find a cyclic subgroup $A$ of $\mathbb{F}_p^*$ with $|A|\sim p^{1-\epsilon}$. Let $S$ be the set of matrices $\theta$ in $SL_2(\mathbb{F}_p)$ such that $\theta(0, 1)\in \{(-x, 0)\colon x\in A \}$. Each such matrix is of the form
\[\begin{bmatrix}
    *&-x\\
    x^{-1} &0
\end{bmatrix},\]
and $|S|\sim p^{2-\epsilon}$. We now let $E$ to be the set of points of the form $(y, *)$ with $y\in A$ and $*\in \mathbb{F}_p$. Then $|E|\sim p^{2-\epsilon}$. Moreover, $S(E)$ is covered by the lines of the from $y=\lambda$ with $\lambda\in A$. So $|S(E)|\ll p^{2-\epsilon}$.

Under subgroup non-concentration for $S$ and geometric non-concentration for $E$, the next theorem presents quantitatively stronger expansion bounds than those in Theorem~\ref{thm2}.

\begin{theorem}\label{thm555}
For any $\gamma\in (0, 1)$, there exists $\epsilon=\epsilon(\gamma)>0$ such that the following holds. 
     Let $E\subset \mathbb{F}_p^2$ and $S\subset SL_2(\mathbb{F}_p)$. Assume $p$ is a sufficiently large prime, $S$ is a symmetric subset of $\SL_2 (\Fbb_p )$ such that $p^{\gamma} < |S| < p^{3-2\gamma } $ and $ |S \cap gH| < p^{-\frac{\gamma}{2}}|S|$ for any subgroup $H \subsetneq \SL_2 (\Fbb_p) $ and $g\in \SL_2(\Fbb_p)$, and any line through the origin contains at most $k$ points from $E$, then 
     \[|S(E)|\gg \min \left\lbrace p^2, ~\max\left\lbrace\frac{|S||E|}{pk}, ~\frac{|S|^{\frac{1}{2}}|E|}{p^{\frac{1-\epsilon}{2}}k^{\frac{1}{2}}}\right\rbrace \right\rbrace.\]
\end{theorem}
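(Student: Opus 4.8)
The plan is to reduce the problem to an energy estimate and then bound that energy in two complementary ways, one geometric (responsible for the first term) and one via growth in $\SL_2(\Fbb_p)$ (responsible for the second). Write $r(x)=\#\{(\theta,y)\in S\times E:\theta y=x\}$, so that $\sum_x r(x)=|S||E|$ and $r$ is supported on $S(E)$. Cauchy--Schwarz then gives $|S(E)|\ge (|S||E|)^2/Q$, where $Q=\sum_x r(x)^2=\#\{(\theta,\theta',y,y')\in S^2\times E^2:\theta y=\theta' y'\}$. This is exactly the quantity appearing in the off-diagonal of the Fourier expansion behind Theorem \ref{incidence}, so bounding $Q$ sharpens Proposition \ref{thm2}. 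Using that $S$ is symmetric I would rewrite $Q=\sum_{g\in \SL_2(\Fbb_p)}r_{SS}(g)\,N(g)$, where $r_{SS}(g)=|S\cap gS|$ records the multiplicative structure of $S$ and $N(g)=\#\{y\in E:gy\in E\}$ records how $g$ interacts with $E$. Everything reduces to upper bounds for this weighted sum.

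For the first term, the key geometric input is that every $\theta\in\SL_2(\Fbb_p)$ maps lines through the origin to lines through the origin, hence permutes the directions $\Pbb^1(\Fbb_p)$ by the projective action. Writing $E_d=E\cap\ell_d$ for the points of $E$ on the line of direction $d$, the hypothesis gives $|E_d|\le k$, and since $g$ sends $\ell_d$ to $\ell_{g\cdot d}$ linearly, $N(g)\le\sum_{d}\min(|E_d|,|E_{g\cdot d}|)\le (p+1)k$, as well as the trivial $N(g)\le |E|$. Feeding $N(g)\ll pk$ into the weighted sum $Q=\sum_g r_{SS}(g)N(g)$ --- and using the non-concentration of $S$ to prevent $S$ from aligning its mass with the few cosets that stabilise the directions populated by $E$ --- is designed to yield $Q\lesssim pk\,|S||E|$. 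Inverting via $|S(E)|\ge (|S||E|)^2/Q$ then produces the first term $|S||E|/(pk)$; equivalently this is the refined incidence estimate obtained from \eqref{universal} by replacing the factor $p$ with $\sqrt{pk}$.

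For the second term I would instead split $Q$ by Cauchy--Schwarz as $Q\le E_\times(S)^{1/2}\big(\sum_g N(g)^2\big)^{1/2}$, where $E_\times(S)=\sum_g r_{SS}(g)^2$ is the multiplicative energy of $S$. Here the hypotheses $p^{\gamma}<|S|<p^{3-2\gamma}$ and $|S\cap gH|<p^{-\gamma/2}|S|$ for all proper $H\subsetneq \SL_2(\Fbb_p)$ are precisely the non-concentration conditions needed to apply Helfgott's product theorem, or the Bourgain--Gamburd machine, in $\SL_2(\Fbb_p)$; these upgrade to a quantitative energy bound $E_\times(S)\ll |S|^{3-\epsilon}$ with $\epsilon=\epsilon(\gamma)>0$. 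The remaining factor $\sum_g N(g)^2=\#\{(g,y_1,y_2,x_1,x_2):gy_i=x_i\}$ is an area/symplectic energy of $E$: since $g\in\SL_2$ is determined by, and preserves, the form $\omega$ on a basis, the generic (linearly independent) part is governed by $\#\{(y_1,y_2,y_3,y_4)\in E^4:\omega(y_1,y_2)=\omega(y_3,y_4)\}$, while the collinear part is controlled by the line condition through $|E_d|\le k$. Combining the two estimates and recording the $p^{\epsilon}$ saving from growth gives the second term $|S|^{1/2}|E|/(p^{(1-\epsilon)/2}k^{1/2})$; taking the better of the two regimes yields the stated maximum, and the $\min$ with $p^2$ comes from the main term $|P||S|/p^2$ in the incidence count exactly as in Proposition \ref{thm2}.

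The heart of the argument --- and the step I expect to be the main obstacle --- is the passage from the qualitative non-concentration hypothesis to the quantitative energy bound $E_\times(S)\ll|S|^{3-\epsilon}$ with an explicit $\epsilon(\gamma)$; this is where the product theorem must be imported and adapted, and where the range $p^{\gamma}<|S|<p^{3-2\gamma}$ is essential. Intertwined with this is the need to isolate the degenerate matrices $g$ --- those fixing a direction that $E$ populates, i.e. elements lying in Borel or torus subgroups --- for which $N(g)$ is as large as $\sim pk$; the subgroup non-concentration bound $|S\cap gH|<p^{-\gamma/2}|S|$ is used precisely to limit the $r_{SS}$-mass that $S\times S$ deposits on these cosets. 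The remaining work --- the symplectic-energy estimate for $E$, the reconciliation of the two regimes into the single maximum, and the boundary cases ($0\in E$ and scalar $g$) --- should be routine by comparison.
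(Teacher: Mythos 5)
Your reduction $|S(E)|\ge (|S||E|)^2/Q$ with $Q=\#\{(\theta,\theta',y,y')\in S^2\times E^2:\theta y=\theta'y'\}$, followed by $Q\le \Ebf(S,S)^{1/2}\bigl(\sum_g N(g)^2\bigr)^{1/2}$, the symplectic-energy bound for $E$ (with the collinear part controlled by $k$), and the Bourgain--Gamburd flattening to get $\Ebf(S,S)\ll |S|^3p^{-\epsilon}$, is essentially the paper's route to the \emph{second} term: this is exactly Theorem \ref{thm: 4.12} applied with $P=S(E)\times E$ and $I(P,S)=|S||E|$, with Corollary \ref{co:36} supplying the energy bound. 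That part of your proposal is sound, and your worry that importing the product theorem is ``the main obstacle'' is misplaced --- the paper uses the Bourgain--Gamburd flattening lemma as a black box, and your non-concentration hypotheses feed into it verbatim.

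The genuine gap is in your derivation of the first term $|S||E|/(pk)$. You claim that $N(g)\le(p+1)k$ together with non-concentration yields $Q\lesssim pk\,|S||E|$, but no mechanism is given and the trivial estimates do not produce it: $\sum_g r_{SS}(g)=|S|^2$ gives only $Q\le \min\{|E|,pk\}\,|S|^2$, and $\sum_g N(g)=p|E|^2$ gives only $Q\le p|S||E|^2\le p^2k|S||E|$, each off by a factor that can be as large as $|S|/(pk)$ or $p$. The structural obstruction is that $Q$ carries two factors of $|S|$, and the only tool you have to beat the trivial $|S|^2$ is the multiplicative energy, which at best reduces it to $|S|^{3/2}p^{-\epsilon/2}$ --- precisely the second term, never the first. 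Your parenthetical that this is ``equivalently'' the refined estimate \eqref{universal} with $p$ replaced by $\sqrt{pk}$ is not correct: that refinement (the second display of Theorem \ref{incidence}) has error $p^{1/2}k^{1/2}\sqrt{|S||P|}$, i.e.\ it is linear in $\sqrt{|S|}$, because there the Cauchy--Schwarz is performed in the group variable $\theta$ over $S$ and then extended to all of $\SL_2(\Fbb_p)$, where the resulting sum is evaluated exactly via Lemmas \ref{lmmay1}--\ref{lmmay3}. Your Cauchy--Schwarz in the image variable $x\in S(E)$ cannot reproduce this. This is also visible from the statement itself: the first term holds with no hypotheses on $S$ whatsoever (Theorem \ref{thm1}(b), proved by partitioning $E$ into $k$ pieces with one point per line through the origin, blowing each piece up by dilations, and applying the first part of Theorem \ref{incidence}), so any argument for it that leans on non-concentration of $S$ is aiming at the wrong target. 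To repair your write-up, prove the first term separately by that unconditional Fourier incidence argument and take the maximum of the two bounds at the end.
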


\paragraph{Sharpness:} We observe that conditions on $S$ in the above theorem are natural for improvements. Indeed, let $\ell_1$  and $\ell_2$ be two lines passing through the origin and $S$ be the set of all $\theta \in \SL_2(\Fbb_p)$ such that $\theta (\ell_1) =\ell_2$. Then, let $E= \ell_1$, we have $S(E)=\ell_2$, and by Lemma \ref{lmmay1}, $|S|\sim p^2$, so
    \[ |S(E)| = p \sim \frac{|S|^{\frac{1}{2}}|E|}{p}. \]
    Moreover, we have $S=gH$, where $H$ is the group of matrices $\theta \in \SL_2(\Fbb_p)$ such that $\theta (\ell_1) =\ell_1$ and $g\in \SL_2(\Fbb_p)$ is a matrix such that $g(\ell_1) =\ell_2$. 
 
We now turn our attention to the case of small sets. If $S$ and $E$ are arbitrary sets of small size, then based on Observation 1 and Observation 2, one can guess of the existence of sets such that the size of $S(E)$ is the same as the trivial lower bound $|E|$. For example, let $A$ be a subgroup of $\mathbb{F}_p^*$, and let $E=\{\lambda (1, 0)\colon \lambda\in A\}$, and $S$ be a set of matrices 
$\theta$ such that $\theta$ maps at least one point in $E$ to $(0, 1)$. Depending on the form of $p$, one can choose $E$ of arbitrary small size, and the size of $S$ can also be chosen arbitrary large in the range $(0, p|E|)$ such that $|S(E)|=|E|$. 

In the setting of small sets, we prove the following optimal result. 
\begin{theorem}\label{thm345} 
  Let $S \subseteq \SL_2 \l( \Fbb_p \r)$, and $E \subseteq \Fbb_p^2\setminus \lo (0, 0)\ro$ be such that $|E| \le p$, any line passing through the origin contains at most $k_1$ points from $E$, and $E$ determines at most $k_2$ distinct directions through the origin. Then,
  \[ \lv S(E)\rv \gtrsim \min \lo  \frac{\lv E\rv \lv S\rv^{\frac{1}{2}}}{k_1^{\frac{1}{2}}k_2^{\frac{1}{2}}}, \frac{\lv E\rv^{\frac{1}{2}} \lv S\rv^{\frac{1}{2}}}{k_1^{1/4}}, \frac{\lv E\rv \lv S\rv^{\frac{1}{2}}}{k_1}, \frac{\lv E\rv^2}{k_1} \ro .\]
Here, by ``directions" we mean the number of lines passing through the origin that are required to cover the whole set $E$.
\end{theorem}
In order to have $|S(E)|> |E|$, one would need the conditions that 
\[|E|\gg k_1, ~~|S|\ge \max \left\lbrace k_1^2, k_1^{1/2}|E|, k_1k_2  \right\rbrace.\]

The lower bound of Theorem \ref{thm345} is attainable from the following example.

For a constant $0<c<1$, let $\lo \ell_i \ro_{i=1}^{\frac{p}{c}}$ be a family of $\frac{p}{c}$ lines passing through the origin. For each $1\le i \le \frac{p}{c}$, choose $a_i \in \ell_i \setminus \lo (0,0) \ro$, and set $E = \lo a_i \colon 1 \le i \le \frac{p}{c} \ro$, $S = \SL_2(\Fbb_p)$. Then we have $k_2=|E|=\frac{p}{c} ,k_1=1$, and $|S(E)|=p^2-1$. 
    Thus, 
     \[ \lv S(E)\rv \sim  \min \lo  \frac{\lv E\rv \lv S\rv^{\frac{1}{2}}}{k_1^{\frac{1}{2}}k_2^{\frac{1}{2}}}, \frac{\lv E\rv^{\frac{1}{2}} \lv S\rv^{\frac{1}{2}}}{k_1^{1/4}}, \frac{\lv E\rv \lv S\rv^{\frac{1}{2}}}{k_1}, \frac{\lv E\rv^2}{k_1} \ro .\]

As in Theorem \ref{thm555}, we now provide an $\epsilon$-improvement under non-concentration conditions.

\begin{theorem}\label{thm3'}
For any $\gamma\in (0, 1)$, there exists $\epsilon=\epsilon(\gamma)>0$ such that the following holds. 
  Let $p$ be a sufficiently large prime, and $E \subseteq \Fbb_p^2\setminus \lo (0, 0)\ro$ such that $|E| \le p$, any line passing through the origin contains at most $k_1$ points from $E$ and $E$ determines at most $k_2$ distinct directions through the origin. Let $S \subseteq \SL_2 \l( \Fbb_p \r)$ be a symmetric subset such that $p^{\gamma} < \lv S\rv < p^{3-2\gamma} $ and $\lv S \cap gH\rv < p^{-\frac{\gamma}{2}} \lv S\rv $ for any subgroup $H \subsetneq \SL_2 \l( \Fbb_p \r)$ and $g \in \SL_2 \l( \Fbb_p \r)$. Then,
  \[ \lv S(E)\rv \gtrsim \min \lo  \frac{\lv E\rv \lv S\rv^{\frac{1}{2}}p^{\epsilon /2}}{k_1^{\frac{1}{2}}k_2^{\frac{1}{2}}}, \frac{\lv E\rv^{\frac{1}{2}} \lv S\rv^{\frac{1}{2}}p^{\epsilon /2}}{k_1^{1/4}}, \frac{\lv E\rv \lv S\rv^{\frac{1}{2}}p^{\epsilon /2}}{k_1}, \frac{\lv E\rv^2}{k_1} \ro .\]
\end{theorem}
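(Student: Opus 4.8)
The plan is to run the proof of Proposition \ref{thm345} essentially unchanged and to insert the Bourgain--Gamburd energy bound at the one step where the multiplicative structure of $S$ is exploited. I first reduce to an energy estimate. For $x \in \Fbb_p^2$ set $r(x) = \lv \lo (\theta, y) \in S \times E \colon \theta y = x \ro \rv$; then $\sum_x r(x) = |S||E|$ and $r$ is supported on $S(E)$, so Cauchy--Schwarz yields
\[ |S|^2 |E|^2 = \Big( \sum_{x \in S(E)} r(x) \Big)^2 \le |S(E)| \sum_x r(x)^2 =: |S(E)|\,\Lambda . \]
Thus it suffices to establish the upper bound
\[ \Lambda \ll |S|^2 k_1 + p^{-\epsilon/2}|S|^{3/2}\Big( |E| k_1^{1/2} k_2^{1/2} + |E|^{3/2} k_1^{1/4} + |E| k_1 \Big), \]
because dividing $|S|^2|E|^2$ by this sum (and using $1/(a+b+c+d) \gg \min\{a^{-1},b^{-1},c^{-1},d^{-1}\}$) returns precisely the four-term minimum claimed in the theorem.

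To analyse $\Lambda$ I would expand it group-theoretically. Writing $g = \theta_2^{-1}\theta_1$ and using that $S$ is symmetric,
\[ \Lambda = \sum_{g \in \SL_2(\Fbb_p)} r_S(g)\, M(g), \qquad r_S(g) = \lv S \cap Sg \rv, \quad M(g) = \lv E \cap g^{-1}E \rv , \]
where $\sum_g r_S(g) = |S|^2$ and, crucially, $\sum_g r_S(g)^2 = \mathtt{Energy}_2$. The factor $M(g)$ is where the geometry of $E$ enters: if $\bar g \in \mathrm{PSL}_2(\Fbb_p)$ denotes the induced action on $\Pbb^1$ and $D \subset \Pbb^1$ is the set of $k_2$ directions of $E$, then a point of $E$ on a line $d$ can be sent into $E$ only when $\bar g d \in D$, and then at most $k_1$ points are, so $M(g) \le k_1\, m(\bar g)$ with $m(\bar g) = \lv \lo d \in D \colon \bar g d \in D \ro \rv$. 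Hence, setting $R(\bar g) = \sum_{g \mapsto \bar g} r_S(g)$, I get $\Lambda \le k_1 \sum_{\bar g} R(\bar g)\, m(\bar g)$, and I split according to $m(\bar g)\le 1$ versus $m(\bar g)\ge 2$. The first range is $S$-structure-free and contributes at most $k_1 \sum_{\bar g} R(\bar g) = k_1 |S|^2$, producing the unimproved fourth term $|E|^2/k_1$.

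For the rich range $m(\bar g) \ge 2$ I would Cauchy--Schwarz in $\bar g$ to separate the group variable from the geometry,
\[ k_1 \sum_{m(\bar g)\ge 2} R(\bar g)\, m(\bar g) \le k_1 \Big( \sum_{\bar g} R(\bar g)^2 \Big)^{1/2} \Big( \sum_{m(\bar g)\ge 2} m(\bar g)^2 \Big)^{1/2} \ll k_1\,\mathtt{Energy}_2^{1/2} \Big( \sum_{m(\bar g)\ge 2} m(\bar g)^2 \Big)^{1/2}, \]
using $\sum_{\bar g} R(\bar g)^2 \ll \sum_g r_S(g)^2 = \mathtt{Energy}_2$. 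This is exactly where the hypotheses on $S$ are spent: by the Bourgain--Gamburd theorem \cite{B.A.08}, under the assumptions that $S$ is symmetric, $p^\gamma < |S| < p^{3-2\gamma}$, and $\lv S \cap gH \rv < p^{-\gamma/2}|S|$ for every proper subgroup $H \subsetneq \SL_2(\Fbb_p)$, one has $\mathtt{Energy}_2 \ll |S|^3 p^{-\epsilon}$, hence $\mathtt{Energy}_2^{1/2} \ll |S|^{3/2} p^{-\epsilon/2}$. Since $\mathtt{Energy}_2$ enters $|S(E)|$ through $\Lambda$ with exponent $\tfrac12$, this is precisely the source of the $p^{\epsilon/2}$ gain in the first three terms, the fourth term being independent of $S$. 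The surviving direction-collision energy $\sum_{m(\bar g)\ge 2} m(\bar g)^2$ is estimated by the point-line and point-plane incidence bounds already used in Proposition \ref{thm345}, which deliver the three geometric profiles $|E| k_1^{1/2}k_2^{1/2}$, $|E|^{3/2}k_1^{1/4}$ and $|E| k_1$.

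The main obstacle is controlling $\sum_{m(\bar g)\ge 2} m(\bar g)^2$ correctly. One must not sum $m(\bar g)^2$ over all of $\mathrm{PSL}_2(\Fbb_p)$: since $\lv \mathrm{PSL}_2(\Fbb_p)\rv \sim p^3$ and the stabiliser of a point of $\Pbb^1$ has size $\sim p^2$, a naive count carries spurious factors of $p$ that would wipe out the gain. Instead the collisions must be counted on the relevant support, where pinning down $\bar g$ by two direction-correspondences (a one-parameter family) is governed by point-line incidences, and by three correspondences (a single $\bar g$) by point-plane (Rudnev) incidences. Verifying that these produce exactly the three profiles above, and that the resulting $p^{\epsilon/2}$ improvement is not swallowed when taking the minimum against the robust term $|E|^2/k_1$ --- which is what forces the windows $|E|\le p$ and $p^\gamma<|S|<p^{3-2\gamma}$ --- is the delicate part; the remaining bookkeeping is identical to the proof of Proposition \ref{thm345}.
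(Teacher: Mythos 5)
Your reduction is sound up to and including the Bourgain--Gamburd step: with $\Lambda=\sum_x r(x)^2$, the inequality $|S|^2|E|^2\le |S(E)|\,\Lambda$ together with the target bound $\Lambda\ll k_1|S|^2+p^{-\epsilon/2}|S|^{3/2}\l( k_1^{1/2}k_2^{1/2}|E|+k_1^{1/4}|E|^{3/2}+k_1|E|\r)$ does recover the stated four-term minimum, and this target is exactly the square of the incidence bound of Theorem \ref{incidence-energy'}(1) that the paper feeds into the argument of Proposition \ref{thm2} with $A=S(E)$, $B=E$. The identity $\Lambda=\sum_g r_S(g)M(g)$, the bound $M(g)\le k_1 m(\bar g)$, and $\sum_{\bar g}R(\bar g)^2\ll \mathtt{Energy}_2\ll |S|^3p^{-\epsilon}$ via Corollary \ref{co:36} are all fine.

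The gap is in the rich range. After your Cauchy--Schwarz you must prove $\sum_{m(\bar g)\ge 2}m(\bar g)^2\ll k_2|E|^2/k_1+|E|^3/k_1^{3/2}+|E|^2$, and this is false in general: $\mathrm{PSL}_2(\Fbb_p)$ acts sharply $3$-transitively on $\Pbb^1$, so for every ordered pair of distinct directions $(d_1,d_2)\in D^2$ and every ordered pair of distinct targets $(d_1',d_2')\in D^2$ there are $\sim p$ elements $\bar g$ with $\bar g d_i=d_i'$, all of which have $m(\bar g)\ge 2$; hence $\sum_{m(\bar g)\ge 2}m(\bar g)^2\gtrsim p\,k_2^2(k_2-1)^2$. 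Already for $k_1=1$, $k_2=|E|$ this is $\gtrsim p|E|^4$ against a required $\ll |E|^3$, and restricting the sum to the support of $R$ (which can be all of $\SL_2(\Fbb_p)$ for large $S$) does not repair it. The loss is structural: once you replace the point-level quantity $M(g)=|E\cap g^{-1}E|$ by the direction-level quantity $m(\bar g)$ and then separate $R$ from $m$ by Cauchy--Schwarz, the rigidity that drives the argument is gone --- two \emph{point} correspondences $\theta y_1=x_1$, $\theta y_2=x_2$ with $y_1\not\parallel y_2$ determine $\theta$ uniquely (Lemma \ref{lmmay2}), whereas two \emph{direction} correspondences only pin $\bar g$ to a coset of a torus of size $\sim p$. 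The paper's proof of Theorem \ref{incidence-energy'}(1) (following Theorem \ref{thm:3.7}) therefore performs the Cauchy--Schwarz at the level of pairs $(x,y)\in E\times E$ and elements of the multiset $S^{-1}S$: non-proportional pairs give a collision count equal to the skew dot-product energy $\lv\lo(x,y,u,v)\in E^4\colon x\cdot y^\perp=u\cdot v^\perp\ro\rv$, bounded by $k_1^{1/2}|E|^3+k_1k_2|E|^2+k_1^2|E|^2$ (Lemma \ref{lem: 6.12}, which is where the point-line incidence input actually lives), while proportional pairs are handled by a dyadic decomposition in the multiplicity $n(x,y)\le k_1$, producing the $k_1|S|^2$ term. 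Your ``relevant support'' remark points at the right phenomenon but is not a proof; as set up, the step cannot be completed.
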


Our approach to quantitative lower bounds for $|S(E)|$ is to recast the orbit-union problem as an incidence problem attached to the group action.
Instead of working directly with
\[
S(E)=\{gy:\ g\in S,\ y\in E\},
\]
we encode the relation $gy=x$ as incidences in a bipartite \emph{action-incidence graph} whose left vertices are pairs $(x,y)$ and whose right vertices are group elements $g\in S$.
This reformulation reduces expansion estimates for $|S(E)|$ to proving \emph{incidence discrepancy} bounds, i.e. upper bounds for
\[
\Bigl|I(A\times B,S)-\tfrac{|A||B||S|}{p^{2}}\Bigr|
\]
in terms of natural structural parameters of $A,B$, and $S$.

For $G=\SL_2(\F_p)$ acting on $\F_p^2$, we first establish a sharp universal estimate of the form
\[
\Bigl| I(A\times B,S)-\tfrac{|A||B||S|}{p^{2}} \Bigr|
\ \ll\ p\sqrt{|A||B||S|}+|S|
\]
(Theorem~\ref{incidence}).
The proof is Fourier-analytic: after expanding the incidence constraint and applying Cauchy--Schwarz, the key step is a geometric reduction using the fact that $\SL_2(\F_p)$ preserves the skew dot product (equivalently, signed areas), cf.\ Lemma~\ref{lmmay2}.
This converts the main contribution to a bound for the ``area-energy'' quantity
\[
\#\{(x_1,x_2,y_1,y_2)\in A^2\times B^2:\ x_1\cdot x_2^\perp=y_1\cdot y_2^\perp\},
\]
which is controlled by known $L^2$ estimates for the skew dot-product/determinant function.
Applying the resulting incidence bound with $A=S(E)$ and $B=E$, and comparing with the trivial lower bound
$I(S(E)\times E,S)\ge |S||E|$, implies the one-step expansion bound in Theorem~\ref{thm2}.

To obtain stronger bounds when additional structure is available, we refine the incidence analysis via an \emph{energy decomposition}.
Under geometric non-concentration on $E$ (at most $k$ points on any line through the origin) and subgroup non-concentration for $S$, we prove an $\epsilon$-improved incidence estimate (Theorem~\ref{thm: 4.12})
\[
I(S(E)\times E,S)\ \ll\ \frac{|S(E)|^{1/2}|E||S|}{p}
\ +\ k^{\frac{1}{4}}p^{\frac{1-\epsilon}{4}}|S(E)|^{\frac{1}{2}}|E|^{\frac{1}{2}}|S|^{\frac{3}{4}}.
\]
The mechanism is a Cauchy--Schwarz reduction to a collision count, which naturally introduces two energies:
(i) a geometric area-energy of $E$, controlled in terms of $k$, and
(ii) the multiplicative energy $E(S,S)=\#\{(a,b,c,d)\in S^4:\ ab=cd\}$.
While $E(S,S)$ can be as large as $\sim |S|^{3}$ in general, Bourgain--Gamburd $L^2$-flattening yields the power-saving bound
$E(S,S)\ll |S|^{3}p^{-\epsilon}$ under subgroup avoidance \cite{B.A.08}.
Incorporating this saving into the refined incidence estimate produces the $\epsilon$-improved expansion statements in Theorems~\ref{thm555} and~\ref{thm3'}.

Finally, in the small-set regime $|E|\le p$, the Fourier methods become ineffective and we instead use incidence geometry.
We prove a weighted/multi-set extension of the Stevens--de Zeeuw point-line incidence theorem (Theorem~\ref{eqn:IPLMS}), obtained via a dyadic decomposition of multiplicities.
This weighted incidence input is then used to bound skew dot-product energies for small configurations, leading to sharp expansion estimates in terms of the \emph{radial multiplicity} $k_1$ (maximum number of points of $E$ on a line through the origin) and the \emph{number of directions} $k_2$ determined by $E$, as in Theorems~\ref{thm345} and~\ref{thm3'}.

One might ask about the higher dimensional case. Let's assume $n=3$ for simplicity. It is not hard to check that if we have two triples $(x, y, z)$ and $(x', y', z')$ such that each forms an independent system, then there exists unique $\theta\in SL_3(\mathbb{F}_p)$ such that $\theta x=x'$, $\theta y=y'$, and $\theta z=z'$ if and only if $\det(x, y, z)=\det(x', y', z')$. Here $\det(x, y, z)$ is the determinant of the matrix with columns $x$, $y$, and $z$. If one wishes to apply the method in the two dimensions, then an estimate on the number of tuples $(x, y, z, x', y', z')\in E^6$, $E\subset \mathbb{F}_p^3$, such that $\det(x, y, z)=\det(x', y', z')$ is needed in the first step. This is the $L^2$-norm version of earlier results studied in \cite{covert, lavinh}. Notice that this framework only solves the case of large sets, and for the case of small sets in higher dimensions, it appears to be a hard problem due to the limited understanding of incidence bounds. We plan to address this in a subsequent paper.

\subsection{The first Heisenberg group 
\texorpdfstring{$G=\mathbb{H}_1(\mathbb{F}_p)$}{H1(Fp)}}

We now move to the case of the Heisenberg group. Let $\mathbb{F}_p$ be a prime field, we denote by $\mathbb{H}_1(\mathbb{F}_p)$ the first Heisenberg group over $\mathbb{F}_p$, i.e. the group of matrices of the form 
\[ [x, y, t]:=\begin{pmatrix}1&x&t\\
0&1&y\\
0&0&1\end{pmatrix}, ~x, y, t\in \mathbb{F}_p.\]
In the group $\mathbb{H}_1(\Fbb_p)$, we have 
\[[x, y, t]\cdot [x', y', t']=\left(x+x', y+y', t+t'+\frac{xy'-yx'}{2}\right).\]

This section uses the notation $X(E)$ in order to distinguish with the case of $SL_2(\mathbb{F}_p)$.

We first look at the following observations. 

{\bf Observation 3:} 
Let $X=\mathbb{H}_1(\Fbb_p)$ and let $E$ be the set of all points with the third coordinate belonging to a set of size $\alpha p$ in $\mathbb{F}_p$. A direct calculation gives $|X(E)|\le \alpha p^3$. This example tells us that in order to cover the whole space or a positive proportion of all elements in $\mathbb{F}_p^3$, the condition $|E|\gg p^3$ is needed. 

{\bf Observation 4:} There are sets $X$ and $E$ of arbitrary large such that $|X(E)|\ll |X|$. Let $A\subset\mathbb{F}_p$ be an arbitrary set, let $X$ be the set of matrices $[a,b,c]\in \mathbb{H}_1(\Fbb_p)$ with $a, c\in \mathbb{F}_p$ and $b\in A$, let $E$ be the set of points $(x, y, z)$ with $x\in \mathbb{F}_p,~ y\in A,$ and $z\in A$. Then we have $|X|=p^2|A|$ and $|E|=p|A|^2$. A direct computation shows that $|X(E)|\ll p^2|A|= |X|$. Thus, for any $0<\epsilon<2$, there exist sets $E\subset \mathbb{F}_p^3$ and $X\subset \mathbb{H}_1(\mathbb{F}_p)$ with $|E|, |X|\ge p^{3-2\epsilon}$ such that $|X(E)|\ll |X|$.

Let $\pi_{23}\colon \mathbb{F}_p^3\to \mathbb{F}_p^2$ defined by $\pi_{23}(x, y, z)=(y, z)$. The following example shows that if the pre-image set $\pi_{23}^{-1}(y, z)\cap E$ is of large size for all $(y, z)\in \pi_{23}(E)$, then the trivial bound $|E|$ might be best possible.

{\bf Observation 5:} There are sets $X$ and $E$ such that $|X(E)|\ll |E|$. Let $A\subset\mathbb{F}_p$ be an arithmetic progression. Let $E$ be a set in $\mathbb{F}_p^3$ such that each point in $E$ has the last two coordinates belonging to $A$. Assume that for each $(y, z)\in \pi_{23}(E)$, we have $|\pi_{23}^{-1}(y, z)\cap E|\sim p$. Then, for all sets $X$ containing of matrices of the form $[*, 1, *]\in \mathbb{H}_1(\Fbb_p)$, we have $|X(E)|\ll |E|$. 

Under a non-concentration condition, the main theorem in this section reads as follows. 

\begin{theorem}\label{main-H}
Let $\epsilon\ge 0$, $X$ be a subset of $\mathbb{H}_1(\Fbb_p)$, and $E$ be a set in $\mathbb{F}_p^3\setminus (\mathbb{F}_p^2\times \{0\})$. Assume that for each $(y, z)\in \mathbb{F}_p^2$, we have $|\pi_{23}^{-1}(y, z)\cap E|\le p^{1-\epsilon}$, then we have
\[|X(E)|\gg \min \left\lbrace p^3, ~\frac{|X||E|}{p^{3-\frac{\epsilon}{2}}} \right\rbrace.\]
\end{theorem}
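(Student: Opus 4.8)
The plan is to bound $|X(E)|$ from below by controlling the collision energy
\[
\mathcal{E} := \#\{(g,g',v,v') \in X^2 \times E^2 : gv = g'v'\},
\]
which is exactly the $L^2$ incarnation of the incidence structure ``$(u,v)$ is incident to $g$ iff $gv=u$'' that underlies the $SL_2$ results. Writing $r(u) = \#\{(g,v) \in X \times E : gv = u\}$ for $u \in X(E)$, we have $\sum_{u} r(u) = |X||E|$ and $\sum_u r(u)^2 = \mathcal{E}$, so Cauchy--Schwarz gives
\[
|X(E)| \ge \frac{\big(\sum_u r(u)\big)^2}{\sum_u r(u)^2} = \frac{|X|^2|E|^2}{\mathcal{E}}.
\]
Hence it suffices to prove $\mathcal{E} \ll |X||E|\,p^{3-\epsilon}$, which already implies the claim, since $\tfrac{|X||E|}{p^{3-\epsilon}} \ge \tfrac{|X||E|}{p^{3-\epsilon/2}} \ge \min\{p^3,\ \tfrac{|X||E|}{p^{3-\epsilon/2}}\}$.

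Next I would decompose the energy group-theoretically. Since $gv=g'v'$ is equivalent to $hv=v'$ with $h:=(g')^{-1}g$, grouping by $h \in \mathbb{H}_1(\mathbb{F}_p)$ yields
\[
\mathcal{E} = \sum_{h \in \mathbb{H}_1(\mathbb{F}_p)} N(h)\,M(h), \qquad N(h) := |X \cap Xh^{-1}|, \quad M(h) := \#\{v \in E : hv \in E\}.
\]
Because $Xh^{-1}$ is a right translate of $X$, we have the crude but decisive bound $N(h) \le |X|$, whence $\mathcal{E} \le |X| \sum_h M(h)$.

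The heart of the matter is the identity $\sum_h M(h) = p \sum_c |E_c|^2$, where $E_c := E \cap (\mathbb{F}_p^2 \times \{c\})$ is the horizontal slice at height $c$. Indeed $\sum_h M(h) = \sum_{v \in E} \#\{h : hv \in E\}$, and for $v=(v_1,v_2,v_3)$ the matrix action reads $hv = (v_1 + x_0 v_2 + t_0 v_3,\ v_2 + y_0 v_3,\ v_3)$ for $h=[x_0,y_0,t_0]$. This is precisely where the hypothesis $E \cap (\mathbb{F}_p^2 \times \{0\}) = \emptyset$ enters: since $v_3 \ne 0$, the orbit map $h \mapsto hv$ is a $p$-to-one surjection onto the plane $\mathbb{F}_p^2 \times \{v_3\}$ ($y_0$ is forced by the second coordinate, and the first-coordinate equation has a $p$-element fibre in $(x_0,t_0)$). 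Thus $\#\{h : hv \in E\} = p\,|E_{v_3}|$, and summing over $v \in E$ gives $p\sum_c |E_c|^2$. The fibre hypothesis $|\pi_{23}^{-1}(y,z) \cap E| \le p^{1-\epsilon}$ then bounds each slice by $|E_c| = \sum_{y}|\pi_{23}^{-1}(y,c)\cap E| \le p \cdot p^{1-\epsilon} = p^{2-\epsilon}$, so $\sum_c |E_c|^2 \le (\max_c |E_c|)\,|E| \le p^{2-\epsilon}|E|$. Combining, $\mathcal{E} \le |X| \cdot p \cdot p^{2-\epsilon}|E| = |X||E|p^{3-\epsilon}$, and the Cauchy--Schwarz inequality closes the argument.

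The step demanding the most care is the orbit count $\#\{h : hv \in E\} = p\,|E_{v_3}|$: one must verify the \emph{exact} $p$-to-one behaviour of $h \mapsto hv$ (which degenerates on the excluded plane $v_3=0$, explaining that hypothesis), since an error of a factor $p$ would spoil the exponent. I note this route in fact gives the slightly stronger exponent $p^{3-\epsilon}$, and that it is sharp for $X=\mathbb{H}_1(\mathbb{F}_p)$. The alternative incidence/Fourier approach mirroring the $SL_2$ proof would instead expand $\mathcal{E} = p^{-3}\sum_\xi |T(\xi)|^2$ with $T(\xi) = \sum_{v \in E} e_p(\xi \cdot v)\,\widehat{X}(\xi_1 v_2, \xi_2 v_3, \xi_1 v_3)$, isolate the main term $\tfrac{|X|^2}{p^2}\sum_c|E_c|^2$ from the frequencies $\xi_1=\xi_2=0$, and estimate the rest; there the coupling of the transform $\widehat{X}$ with the point $v$ in its argument is the genuine technical obstacle, which the energy argument above circumvents.
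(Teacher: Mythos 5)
Your proof is correct, and it takes a genuinely different route from the paper. The paper deduces the theorem from a Fourier-analytic incidence estimate (Theorem \ref{H-inci}): it applies Cauchy--Schwarz in $\theta$ to extend the sum from $X$ to all of $\mathbb{H}_1(\Fbb_p)$, which produces the main term $|P||X|/p^3$ and reduces matters to counting quadruples via Propositions \ref{propo2.1}, \ref{propo2.2} and Lemma \ref{lm23} (a bilinear estimate of the shape $\frac{|A|^2|B|^2}{p}+p^{3-2\epsilon}|A||B|$). You instead bound the collision energy directly, factor it as $\sum_h N(h)M(h)$ with $N(h)=|X\cap Xh^{-1}|\le |X|$, and exploit only the exact $p$-to-one fibre count of the orbit map $h\mapsto hv$ for $v_3\ne 0$ --- which is the same elementary computation as the paper's Lemma \ref{lm23}/the Heisenberg analogue of Lemma \ref{lmmay1}, but you avoid the bilinear main-term extraction entirely. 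All the key steps check out: the Cauchy--Schwarz reduction $|X(E)|\ge |X|^2|E|^2/\mathcal{E}$, the identity $\sum_h M(h)=p\sum_c|E_c|^2$, and the slice bound $|E_c|\le p^{2-\epsilon}$ from the fibre hypothesis. Your conclusion $|X(E)|\gg |X||E|/p^{3-\epsilon}$ is formally stronger than the stated $\min\{p^3, |X||E|/p^{3-\epsilon/2}\}$, but it is consistent: the hypothesis forces $|E|\le p^{3-\epsilon}$ so your bound never exceeds $|X|\le p^3$, and in fact the paper's own deduction from Theorem \ref{H-inci} also yields the exponent $p^{3-\epsilon}$ in the second branch, so the theorem as stated under-claims slightly. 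The trade-off is that the paper's incidence theorem is a reusable two-sided estimate for arbitrary products $A\times B$ (with a genuine main term, hence capable of positive-proportion conclusions in other applications), whereas your energy argument is shorter, entirely elementary, and tailored to this corollary.
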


Note that when $\epsilon=0$, it implies directly that 
\[|X(E)|\gg \min \left\lbrace p^3, ~\frac{|X||E|}{p^{3}} \right\rbrace,\]
which is sharp by Observation 3. 

As in the $\SL_2(\F_p)$ setting, we prove Theorem~\ref{main-H} by encoding the action relation
$\theta y=x$ as incidences in an associated bipartite action-incidence graph. However, the
underlying invariant structure is different.  Writing
\[
[a,b,c]:=\begin{pmatrix}1&a&c\\ 0&1&b\\ 0&0&1\end{pmatrix}\in \mathbb{H}_1(\F_p),
\qquad
[a,b,c](x,y,z)=(x+ay+cz,\; y+bz,\; z),
\]
we see that the third coordinate is preserved; accordingly we restrict to points with $z\neq 0$
(and likewise in Theorem~\ref{H-inci}).

The key step in the incidence analysis is to understand when a \emph{single} group element
$[a,b,c]$ can simultaneously map two points $(x,y,z),(u,v,w)$ to two targets
$(x',y',z'),(u',v',w')$.  From the explicit action one immediately gets $z'=z$, $w'=w$ and
\[
b=\frac{y'-y}{z}=\frac{v'-v}{w},
\]
so compatibility forces the bilinear constraint
\begin{equation}\label{bracket}
z\,(v'-v)=w\,(y'-y)
\qquad
\text{(equivalently } y\,w+z\,v'=y'\,w+v\,z\text{)},
\end{equation}
together with $z=z'$ and $w=w'$.  After a Fourier expansion of the incidence relation and an
application of Cauchy--Schwarz, the discrepancy reduces to bounding a \emph{weighted} count of
quadruples satisfying this bracket equation, where the weights are precisely the multiplicities of
fibers of the projection $\pi_{23}(x,y,z)=(y,z)$.  A weighted orthogonality estimate
(Lemma~\ref{weight-H}) combined with the fiber non-concentration hypothesis
$|\pi_{23}^{-1}(y,z)\cap B|\le p^{1-\epsilon}$ gives
\[
\Bigl|I(A\times B,X)-\tfrac{|A||B||X|}{p^{3}}\Bigr|
\ \ll\ p^{\frac{3-\epsilon}{2}}\,|A|^{\frac{1}{2}}|B|^{\frac{1}{2}}|X|^{\frac{1}{2}},
\]
i.e. Theorem~\ref{H-inci}, with degenerate configurations handled by a
separate analysis.
Applying the incidence bound with $A=X(E)$ and $B=E$ then gives Theorem~\ref{main-H}.
Finally, Observation~5 shows that fiber control is essentially necessary: if many points of $E$
lie in single $\pi_{23}$-fibers (of size $\sim p$), even large $X$ may fail to expand beyond the
trivial scale.  We therefore do not pursue small-set analogues in $\mathbb{H}_1(\F_p)$ here, as the
corresponding three-dimensional constraint of (\ref{bracket}) lead to substantially more intricate incidence
geometry.


\section{Incidence structures spanned by \texorpdfstring{$SL_2(\mathbb{F}_p)$}{SL2(Fp)}}
Let $x,y \in \Fbb_p^2$ and $\theta \in \SL_2 \l( \Fbb_p \r) $, we say $(x,y)$ is incident to $\theta$ if $\theta y=x$. Let $P=A\times B \subseteq \Fbb_p^2 \times \Fbb_p^2$ and $S\subseteq \SL_2 \l( \Fbb_p \r) $, we denote the number of incidences between $P$ and $S$ by $I(P,S)$.

In this section, we prove the following incidence bound. 

\begin{theorem}\label{incidence} Let $P=A\times B \subseteq \mathbb{F}_p^2\times \mathbb{F}_p^2$ and $S \subseteq \SL_2 \l( \Fbb_p \r) $. Then, we have
    \[\lv I(P,S)- \frac{\lv P\rv \lv S\rv}{p^2} \rv \ll p\sqrt{\lv S\rv \lv P\rv}+|S|.\]
Moreover, let $k_A$ and  $k_B$ be the maximal number of points from $A$ and $B$ on a line passing through the origin, respectively. Assume that $\min\{k_A, k_B\}= k$,
then
 \[\lv I(P,S)- \frac{\lv P\rv \lv S\rv}{p^2} \rv \ll p^{\frac{1}{2}}k^{\frac{1}{2}}\sqrt{\lv S\rv \lv P\rv}+|S|.\]
\end{theorem}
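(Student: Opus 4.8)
The plan is to count incidences via discrete Fourier analysis on the abelian group $\mathbb{F}_p^2$, exploiting the fact that the incidence condition $\theta y = x$ is an equality of two vectors in $\mathbb{F}_p^2$. First I would write the incidence count as
\[
I(P,S) = \sum_{x\in A}\sum_{y\in B}\sum_{\theta\in S} \mathbf{1}[\theta y = x].
\]
Using the standard orthogonality relation $\mathbf{1}[u=v] = p^{-2}\sum_{\xi\in\mathbb{F}_p^2} e_p(\xi\cdot(u-v))$ (where $e_p(t)=e^{2\pi i t/p}$), I would expand the indicator of $\theta y = x$ over a frequency variable $\xi\in\mathbb{F}_p^2$. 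The $\xi=0$ term contributes exactly the main term $|P||S|/p^2 = |A||B||S|/p^2$, so the task reduces to bounding the contribution of the nonzero frequencies,
\[
\frac{1}{p^2}\sum_{\xi\neq 0}\Bigl(\sum_{x\in A} e_p(-\xi\cdot x)\Bigr)\Bigl(\sum_{y\in B}\sum_{\theta\in S} e_p(\xi\cdot \theta y)\Bigr).
\]

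For the first bound, I would apply Cauchy--Schwarz in $\xi$ to split off the $A$-sum from the joint $(B,S)$-sum, then expand each squared modulus and use orthogonality to reduce everything to counting solutions of vector equations. The $A$-factor gives $\sum_{\xi\neq 0}|\widehat{\mathbf{1}_A}(\xi)|^2 \ll p^2|A|$ by Parseval. The harder factor involves $\sum_{\xi\neq 0}\bigl|\sum_{y\in B,\theta\in S} e_p(\xi\cdot\theta y)\bigr|^2$, which upon expansion counts quadruples $(y,y',\theta,\theta')\in B^2\times S^2$ with $\theta y = \theta' y'$; crucially, for fixed $\theta,\theta'$ the number of such $(y,y')\in B^2$ is controlled because $\theta^{-1}\theta' \in SL_2$ is a bijection, so each pair $(\theta,\theta')$ contributes at most $|B|$ solutions, giving roughly $|B||S|^2$ plus lower-order diagonal terms. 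Combining these via Cauchy--Schwarz and dividing by $p^2$ should produce the term $p\sqrt{|S||P|}$, with the $+|S|$ absorbing the diagonal $\theta=\theta'$ contribution. I expect the bookkeeping of which terms land in the main term versus the error, and correctly isolating the $+|S|$, to be the fiddliest part, but it is routine.

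For the refined bound under the line condition, the key new input is that when $A$ (or $B$) has at most $k$ points on any line through the origin, the exponential sum $\widehat{\mathbf{1}_A}(\xi)$ is better distributed, and more precisely the quantity $\sum_{\xi\neq 0}|\widehat{\mathbf{1}_A}(\xi)|^4$ — equivalently the additive/multiplicative energy counting collinear configurations — is controlled by $k$. The strategy is to rerun the Cauchy--Schwarz step but keep a fourth-moment (energy) quantity rather than a second moment on the side with the line structure, so that the savings $p^{1/2}k^{1/2}$ replaces the worst-case factor $p$. Concretely, I would bound the $\xi$-sum by Hölder, pairing the $A$-side energy $\sum_\xi|\widehat{\mathbf{1}_A}(\xi)|^2 \cdot (\text{collinearity count} \ll pk|A|)$ against the $(B,S)$-side count, where the collinearity count reflects that frequencies $\xi$ aligned with heavily-populated directions of $A$ are the only obstruction. \emph{The main obstacle} I anticipate is precisely this step: correctly extracting the factor $pk|A|$ (rather than $p^2|A|$) from the line hypothesis and marrying it to the group-theoretic bijectivity bound on the $S$-side without losing the sharp exponent, since the two mechanisms (Fourier decay from collinearity, and bijectivity of $SL_2$) must be combined in a single Cauchy--Schwarz/Hölder split. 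Once both moment bounds are in hand, assembling the final estimate $p^{1/2}k^{1/2}\sqrt{|S||P|}+|S|$ is a direct computation.
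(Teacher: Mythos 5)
Your Fourier expansion and the identification of the main term $|P||S|/p^2$ match the paper's, but your Cauchy--Schwarz is applied in the wrong variable, and this creates a genuine gap. You pair the frequency sum as $(A)$ versus $(B,S)$ and bound the second factor via the quadruple count $N=\#\{(y,y',\theta,\theta')\in B^2\times S^2:\theta y=\theta' y'\}\le |B||S|^2$, using only the bijectivity of $\theta^{-1}\theta'$. Feeding $\sum_{\xi\neq 0}|\widehat{\mathbf 1}_A(\xi)|^2\le p^2|A|$ and $\sum_{\xi\neq 0}\bigl|\sum_{y,\theta}e_p(\xi\cdot\theta y)\bigr|^2\le p^2|B||S|^2$ into Cauchy--Schwarz and dividing by $p^2$ yields $|A|^{1/2}|B|^{1/2}|S|=|P|^{1/2}|S|$, not the $p\sqrt{|S||P|}$ you assert; the two coincide only when $|S|=p^2$. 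Since $|P|^{1/2}|S|\ge\min\{|A|,|B|\}\,|S|\ge I(P,S)$ is essentially the trivial incidence bound, and it exceeds $p\sqrt{|S||P|}+|S|$ whenever $|S|\gg p^2$ (recall $|S|$ can be as large as $\sim p^3$), your argument does not establish the stated estimate. Moreover $N\le|B||S|^2$ is sharp (take $B$ a single point and $S$ inside a coset of its stabilizer), so no amount of bookkeeping rescues this split without an energy bound on $S$ itself --- which is exactly what the paper reserves for Theorem \ref{thm: 4.12} under extra hypotheses and is not available here.

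The missing idea is to apply Cauchy--Schwarz in the group variable instead: write the error as $p^2\sum_{m\neq 0}\sum_{\theta\in S}\widehat P(-m,\theta^t m)$, pull out $|S|^{1/2}$, and complete the remaining sum over all $\theta\in SL_2(\mathbb{F}_p)$. The completed sum counts pairs of simultaneous incidences $\theta y_1=x_1$, $\theta y_2=x_2$ over the whole group; Lemmas \ref{lmmay1} and \ref{lmmay2} convert this into the area energy $\#\{(x_1,x_2,y_1,y_2)\in A^2\times B^2: x_1\cdot x_2^{\perp}=y_1\cdot y_2^{\perp}\}$ plus a degenerate proportional-pairs term, and Lemma \ref{lmmay3} bounds that energy by $|A|^2|B|^2/p+Cp^2|A||B|$, whose main term cancels against the $m_1=m_2=0$ contribution, leaving precisely $p\sqrt{|S||P|}$. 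The refinement with $k$ then follows from the sharpened energy bound $|A|^2|B|^2/p+Cpk|A||B|$ when every line through the origin carries at most $k$ points of $A$ or of $B$ --- not, as you propose, from a fourth moment of $\widehat{\mathbf 1}_A$; your plan for the second part inherits the same structural defect as the first.
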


We have some comments on this theorem. 
\begin{enumerate}
    \item This theorem is sharp, i.e. there are sets $P$ and $S$ such that the upper bound is attained. 

     Let $P= A \times B$ where $A,B$ are the sets of points on two lines through the origin. Let $S$ be the set of all matrices $\theta$ in $\SL_2 \l( \Fbb_p \r)$ such that $\theta (B)=A$. Then, we have $\lv S\rv =p(p-1), \lv P\rv =p^2$, and
    \[ I(P,S) = p(p-1) + p(p-1)^2.  \]
    Hence,
    \[ \lv I(P,S) - \frac{\lv P\rv \lv S\rv}{p^2} \rv = p(p-1)^2 \sim p\sqrt{\lv P\rv \lv S\rv} + \lv S\rv .\]
    
    \item The same result holds for general sets $P\subset \mathbb{F}_p^4$ instead of sets of Cartesian product structures.
\end{enumerate}
If we assume some structural conditions on $S$, the upper bound of the above theorem can be improved further. 

\begin{theorem}\label{thm: 4.12}
For $\gamma\in (0, 1)$, there exists $\epsilon=\epsilon(\gamma)>0$ such that the following holds.
    Let $p$ be a sufficiently large prime. Let $P=A\times B \subseteq (\Fbb_p^2 \times \Fbb_p^2 ) \setminus \lo (0, 0, 0, 0)\ro$, and let $S$ be a symmetric subset of $\SL_2 (\Fbb_p )$ such that $p^\gamma < |S| < p^{3-2\gamma } $ and $ |S \cap gH| < p^{\frac{-\gamma}{2}}|S|$ for any subgroup $H \subsetneq \SL_2 (\Fbb_p) $ and $g\in \SL_2(\Fbb_p)$. Then, we have 
    \[ I(P,S) \ll \frac{|A|^{\frac{1}{2}}|B||S|}{p} + p^{\frac{2-\epsilon}{4}}|A|^{\frac{1}{2}}|B|^{\frac{1}{2}}|S|^{\frac{3}{4}}.\] Moreover,  if any line passing through the origin contains at most $k$ points from $B$, we have
    \[ I(P,S) \ll \frac{|A|^{\frac{1}{2}}|B||S|}{p} + k^{\frac{1}{4}}p^{\frac{1-\epsilon}{4}}|A|^{\frac{1}{2}}|B|^{\frac{1}{2}}|S|^{\frac{3}{4}}.\]
\end{theorem}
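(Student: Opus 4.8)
The plan is to run a Fourier-analytic argument that isolates the two energies $\mathtt{Energy}_1$ and $\mathtt{Energy}_2$ through two applications of Cauchy--Schwarz, the decisive feature being that a main term cancels exactly in the second moment. After discarding points with a zero coordinate (which contribute no incidences once $(0,0,0,0)\notin A\times B$), I may assume $0\notin A$ and $0\notin B$. Writing $e_p(t)=e^{2\pi i t/p}$ and expanding the indicator of $\theta y=x$,
\[
I(P,S)=\frac{1}{p^2}\sum_{m\in\Fbb_p^2}\Big(\sum_{x\in A}e_p(-m\cdot x)\Big)\Big(\sum_{\theta\in S}\sum_{y\in B}e_p(m\cdot\theta y)\Big),
\]
the term $m=0$ producing the main term $\tfrac{|A||B||S|}{p^2}$, which is dominated by $\tfrac{|A|^{1/2}|B||S|}{p}$ since $|A|\le p^2$. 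Denoting the inner $S$-$B$ sum by $T(m)$ and applying Cauchy--Schwarz in $m$ against $\widehat{A}$ (whose $\ell^2$ mass is at most $p^2|A|$ by Parseval), the problem reduces to bounding $U:=\sum_{m\neq0}|T(m)|^2$, since the error is $\le \tfrac{|A|^{1/2}}{p}U^{1/2}$.

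The core of the argument is the treatment of $U$. Expanding and summing the characters over $m$ gives the exact identity $U=p^2N-|B|^2|S|^2$, where $N=|\{(\theta,\theta',y,y')\in S^2\times B^2:\theta y=\theta'y'\}|$. Substituting $g=(\theta')^{-1}\theta$ I rewrite $N=\sum_{g}f(g)\,h(g)$ with $f(g)=|\{(\theta,\theta')\in S^2:(\theta')^{-1}\theta=g\}|$ and $h(g)=|B\cap g^{-1}B|$; crucially $\sum_g f(g)=|S|^2$, so subtracting the mean yields the exact relation
\[
U=p^2\sum_{g\in\SL_2(\Fbb_p)}f(g)\Big(h(g)-\tfrac{|B|^2}{p^2}\Big).
\]
Cauchy--Schwarz now splits this as $U\le p^2\big(\sum_g f(g)^2\big)^{1/2}\big(\sum_g (h(g)-\tfrac{|B|^2}{p^2})^2\big)^{1/2}$. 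Because $S$ is symmetric, $\sum_g f(g)^2$ is exactly $\mathtt{Energy}_2=|\{(a,b,c,d)\in S^4:ab=cd\}|$, to which the Bourgain--Gamburd bound $\mathtt{Energy}_2\ll|S|^3p^{-\epsilon}$ applies under the stated non-concentration hypotheses. For the second factor, since $\SL_2(\Fbb_p)$ acts transitively on nonzero vectors with stabilizers of order $p$, one gets $\sum_g h(g)=p|B|^2$, while $\sum_g h(g)^2=\sum_{y,y'\in B}|\{g:gy,gy'\in B\}|$ is governed by the symplectic invariance $\omega(gy,gy')=\omega(y,y')$: the linearly independent pairs contribute precisely $\sum_{t\neq0}r(t)^2$ with $r(t)=|\{(z,z')\in B^2:\omega(z,z')=t\}|$, i.e.\ essentially $\mathtt{Energy}_1$, and the linearly dependent pairs contribute $O(pk|B|^2)$.

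Everything hinges on the cancellation in the variance $\sum_g(h(g)-\tfrac{|B|^2}{p^2})^2=\sum_g h(g)^2-\tfrac{|B|^4}{p}-\tfrac{|B|^4}{p^3}$: the leading term $\tfrac{|B|^4}{p}$ of $\sum_g h(g)^2$ (which is forced by the trivial lower bound $\mathtt{Energy}_1\ge|B|^4/p$) is removed exactly by the centering, leaving only the genuinely smaller surplus. Feeding in $\mathtt{Energy}_1\ll\tfrac{|B|^4}{p}+pk|B|^2$ then gives $\sum_g(h(g)-\tfrac{|B|^2}{p^2})^2\ll pk|B|^2$, whence $U\ll p^2(|S|^3p^{-\epsilon})^{1/2}(pk|B|^2)^{1/2}=k^{1/2}|B||S|^{3/2}p^{(5-\epsilon)/2}$. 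Substituting into $\tfrac{|A|^{1/2}}{p}U^{1/2}$ produces the error term $k^{1/4}p^{(1-\epsilon)/4}|A|^{1/2}|B|^{1/2}|S|^{3/4}$, which is the refined estimate; the first estimate follows from the trivial bound $k\le p$. I expect the main obstacle to be making this cancellation rigorous: one must check that the main term of $\sum_g h(g)^2$ is \emph{precisely} $|B|^4/p$ rather than merely bounded by it, and carefully dispatch the degenerate contributions — the pairs $(y,y')$ spanning a common line through the origin, and the discrepancy between $\sum_g h(g)^2$ and $\mathtt{Energy}_1$ at the value $t=0$ — all of which must be absorbed into the admissible error $pk|B|^2$.
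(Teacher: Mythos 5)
Your proposal is correct, and its skeleton coincides with the paper's: both reduce the problem to the quantity $N=|\{(\theta,\theta',b,b')\in S^2\times B^2:\theta b=\theta'b'\}|$ via Cauchy--Schwarz in the $A$-variable (you do this through Parseval in $m$, the paper directly on the counts $s_{(a,b)}$), and both feed in exactly the two inputs the paper uses, namely the Bourgain--Gamburd bound $\Ebf(S,S)\ll |S|^3p^{-\epsilon}$ of Corollary \ref{co:36} and the skew dot-product energy bound $|B|^4/p+O(pk|B|^2)$ of Lemma \ref{lmmay3}. The one place you genuinely diverge is in bounding $N$: the paper views $N$ as an incidence count between $B\times B$ and the multiset $S^{-1}S$ and asserts the bound ``by following the proof of Theorem \ref{incidence} identically,'' with the cancellation of the main term effected by removing the $m=0$ Fourier mode; you instead write $N=\sum_g f(g)h(g)$ with $h(g)=|B\cap g^{-1}B|$ and obtain the cancellation in physical space by centering $h$ at $|B|^2/p^2$ and computing the variance exactly from the stabilizer counts of Lemma \ref{lmmay1} and the symplectic invariance of Lemma \ref{lmmay2}. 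This buys a fully explicit, self-contained verification of a step the paper leaves terse. One small remark: your closing worry that the main term of $\sum_g h(g)^2$ must be \emph{precisely} $|B|^4/p$ is unfounded --- since the variance $\sum_g(h(g)-|B|^2/p^2)^2$ is nonnegative by fiat and you only need an upper bound on it, the one-sided inequality $\sum_{t\neq 0}r(t)^2\le |B|^4/p+Cpk|B|^2$ from Lemma \ref{lmmay3}, together with the $O(pk|B|^2)$ contribution of the linearly dependent pairs, already closes the argument.
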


The following example shows that conditions on $S$ are necessary to obtain non-trivial improvements.

Let $\ell_1$ and $\ell_2$ be two lines passing through the origin. Let $A, B$ be the set of points on $\ell_1, \ell_2$, respectively. Let $S$ be the set of all $\theta\in SL_2(\mathbb{F}_p)$ such that $\theta(\ell_2)=\ell_1$, and $H$ be the group of matrices $\theta\in SL_2(\mathbb{F}_p)$ such that $\theta(\ell_2)=\ell_2$. Then 
\[ I(P,S) \sim p|A||B| \sim p^3  \sim \frac{|A|^{\frac{1}{2}}|B||S|}{p} + p^{\frac{2}{4}}|A|^{\frac{1}{2}}|B|^{\frac{1}{2}}|S|^{\frac{3}{4}}   . \]
Moreover, it is not hard to check that $S=gH$ for some $g\in \SL_2(\Fbb_p)$.



For small sets, we first look at an example, which says that $I(P, S)$ could be $|P||S|$. Let $S$ be a subset of matrices $\theta$ in $SL_2(\mathbb{F}_p)$ such that $\theta(0, 1)=(1, 0)$. So the size of $S$ can be arbitrary smaller than $p$. Let $P=\{ \lambda(e_1, e_2)\colon \lambda\in \mathbb{F}_p^*\}$, where $e_1=(1, 0), e_2=(0, 1)$. Then the size of $P$ can be arbitrary smaller than $p$ by choosing $\lambda$. With these sets $P$ and $S$, we have $I(P, S)=|P||S|$.

When we know better about structures of $B$ and $S$, then the following theorem is attained. 

\begin{theorem}\label{incidence-energy'}
For $\gamma\in (0, 1)$, there exists $\epsilon=\epsilon(\gamma)>0$ such that the following holds.

    Let $p$ be a sufficiently large prime. Let $P =A \times B \subseteq \l( \Fbb_p^2 \times \Fbb_p^2 \r) \setminus \lo (0, 0, 0, 0)\ro $, and let $S$ be a symmetric subset of $\SL_2 \l( \Fbb_p \r)$ such that $p^\gamma < \lv S\rv < p^{3-2\gamma} $ and $\lv S\cap gH \rv < p^{\frac{-\gamma}{2}}\lv S\rv$ for any subgroup $H \subsetneq \SL_2 \l( \Fbb_p \r)$ and $g \in \SL_2 \l( \Fbb_p \r)$. 
    \begin{enumerate}
        \item Assume $|B|\le p$, any line passing through the origin contains at most $k_1$ points from $B$, and $B$ determines at most $k_2$ distinct directions through the origin, then
        \[ I(P,S) \lesssim k_1^{\frac{1}{2}}|A|^{\frac{1}{2}}|S|+  \frac{k_1^{\frac{1}{2}}\lv B\rv^{\frac{1}{2}}|A|^{\frac{1}{2}}|S|^{\frac{3}{4}}}{p^{\frac{\epsilon}{4}}} + \frac{k_1^{\frac{1}{8}}\lv B\rv^{\frac{3}{4}}|A|^{\frac{1}{2}} |S|^{\frac{3}{4}}}{p^{\frac{\epsilon}{4}}}+\frac{k_1^{\frac{1}{4}}k_2^{\frac{1}{4}}\lv B\rv^{\frac{1}{2}} |A|^{\frac{1}{2}}|S|^{\frac{3}{4}}}{p^{\frac{\epsilon}{4}}}.\]
        \item Assume $|B|\le p^{8/15}$, and any line passing through the origin contains at most $k_1$ points from $B$, then
        \[ I(P,S) \lesssim  k_1^{\frac{1}{2}}|A|^{\frac{1}{2}}|S|+  \frac{k_1^{\frac{1}{2}}\lv B\rv^{\frac{1}{2}}|A|^{\frac{1}{2}}|S|^{\frac{3}{4}}}{p^{\frac{\epsilon}{4}}}+\frac{k_1^{\frac{1}{15}}|B|^{\frac{187}{225}} |A|^{\frac{1}{2}} |S|^{\frac{3}{4}}}{p^{\frac{\epsilon}{4}}}.\]
    \end{enumerate}
\end{theorem}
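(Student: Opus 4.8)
The plan is to pass from the incidence count to the two energies $\mathtt{Energy_1}$ (of $B$) and $\mathtt{Energy_2}$ (of $S$) by a single Cauchy--Schwarz, and then to control $\mathtt{Energy_1}$ for small sets by incidence geometry rather than by the Fourier bound $|B|^4/p+pk_1|B|^2$ quoted in the introduction, which for $|B|\le p$ is already worse than the trivial bound and so must be replaced. Write $r(x):=|\{(y,\theta)\in B\times S:\theta y=x\}|$, so that $I(P,S)=\sum_{x\in A}r(x)$ and Cauchy--Schwarz over $A$ gives $I(P,S)^2\le |A|\sum_{x\in\Fbb_p^2}r(x)^2$. Expanding the square, a given $(y,\theta,y',\theta')$ contributes exactly when $\theta y=\theta'y'$; setting $g=(\theta')^{-1}\theta$ this becomes
\[ \sum_x r(x)^2=\sum_{g\in\SL_2(\Fbb_p)} r_{SS}(g)\,N(g),\qquad r_{SS}(g):=|\{(\theta,\theta')\in S^2:(\theta')^{-1}\theta=g\}|,\quad N(g):=|\{y\in B: gy\in B\}|. \]
Because $S$ is symmetric, $\sum_g r_{SS}(g)^2=|\{(a,b,c,d)\in S^4: ab=cd\}|=\mathtt{Energy_2}$, so the entire task reduces to bounding $T:=\sum_g r_{SS}(g)N(g)$.

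Next I would split $T$ by the size of $N(g)$. The contribution of all $g$ with $N(g)\le k_1$ is at most $k_1\sum_g r_{SS}(g)=k_1|S|^2$, which after the final square root yields the first term $k_1^{1/2}|A|^{1/2}|S|$. The threshold $k_1$ is chosen precisely so that $N(g)>k_1$ forces the $>k_1$ preimages $y\in B$ with $gy\in B$ to span at least two directions through the origin (each such line meets $B$ in at most $k_1$ points); hence among them there are $\gtrsim\tau^2$ linearly independent ordered pairs, and any independent pair $y_1\mapsto gy_1,\ y_2\mapsto gy_2$ determines $g$ uniquely while producing a quadruple $(y_1,y_2,gy_1,gy_2)\in B^4$ with $\det(gy_1,gy_2)=\det(y_1,y_2)$. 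Dyadically localising $N(g)\sim\tau$ and writing $G_\tau=\{g:N(g)\sim\tau\}$, this "two independent pairs determine $g$" principle gives $|G_\tau|\,\tau^2\lesssim \mathtt{Energy_1}(B)$, while Cauchy--Schwarz together with the Bourgain--Gamburd bound $\mathtt{Energy_2}\ll|S|^3p^{-\epsilon}$ \cite{B.A.08} gives $\sum_{g\in G_\tau}r_{SS}(g)\le \mathtt{Energy_2}^{1/2}|G_\tau|^{1/2}$. Summing $\tau\cdot\mathtt{Energy_2}^{1/2}|G_\tau|^{1/2}$ over the $O(\log p)$ scales collapses the $\tau$-dependence and yields $T\lesssim k_1|S|^2+\mathtt{Energy_2}^{1/2}\mathtt{Energy_1}(B)^{1/2}$, whence $I(P,S)\lesssim k_1^{1/2}|A|^{1/2}|S|+|A|^{1/2}\mathtt{Energy_2}^{1/4}\mathtt{Energy_1}(B)^{1/4}$.

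It then remains to bound $\mathtt{Energy_1}(B)=|\{(y_1,y_2,y_3,y_4)\in B^4:\det(y_1,y_2)=\det(y_3,y_4)\}|$ for small $B$, and this is where the two parts diverge. The degenerate locus $\det=0$ contributes $r(0)^2\le(k_1|B|)^2=k_1^2|B|^2$, which after the $\tfrac14$-power gives the common term $k_1^{1/2}|B|^{1/2}|A|^{1/2}|S|^{3/4}p^{-\epsilon/4}$. For the nonzero-determinant part in Part 1 I would arrange the equation $\det(y_1,y_2)=\det(y_3,y_4)$ as a point--plane incidence problem in $\Fbb_p^3$ and apply Rudnev's theorem (valid up to $|B|\le p$), refined by the collinearity parameter $k_1$ and the number of directions $k_2$, to obtain $\mathtt{Energy_1}(B)\ll k_1^2|B|^2+k_1^{1/2}|B|^3+k_1k_2|B|^2$; inserting this gives exactly the remaining three terms of Part 1. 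For Part 2, where $|B|\le p^{8/15}$, I would instead feed the same determinant count into the Stevens--de Zeeuw point--line incidence bound, whose exponent $11/15$ produces $\mathtt{Energy_1}(B)\ll k_1^2|B|^2+k_1^{4/15}|B|^{748/225}$ and hence the exponents $187/225$ and $1/15$ in the statement.

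The genuinely hard part is not the Cauchy--Schwarz bookkeeping but the incidence bound for $\mathtt{Energy_1}(B)$ with the correct \emph{simultaneous} dependence on $k_1$ and $k_2$: one must peel off the $\det=0$ locus, present the remaining determinant equation as points against planes (resp.\ lines) without discarding the arithmetic carried by the directions, and check that the admissible range $|B|\le p$ (resp.\ $|B|\le p^{8/15}$) is exactly what keeps Rudnev's (resp.\ Stevens--de Zeeuw's) theorem inside its valid regime. A secondary delicate point is the interface with Bourgain--Gamburd: the dyadic step must only ever invoke $\sum_g r_{SS}(g)^2=\mathtt{Energy_2}$, so that the non-abelian input enters solely as the black-box estimate $\mathtt{Energy_2}\ll|S|^3p^{-\epsilon}$, with the hypotheses $p^\gamma<|S|<p^{3-2\gamma}$ and $|S\cap gH|<p^{-\gamma/2}|S|$ used nowhere else.
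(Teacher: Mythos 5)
Your proposal is correct and follows essentially the same route as the paper: a Cauchy--Schwarz over $A$ reducing $I(P,S)$ to the action of the multiset $S^{-1}S$ on $B\times B$, then a second Cauchy--Schwarz trading that count for $\Ebf(S,S)^{1/2}$ (controlled via Corollary \ref{co:36}, i.e.\ Bourgain--Gamburd) times the square root of the skew dot-product energy of $B$, the latter bounded by Lemma \ref{lem: 6.12} in Part 1 and Lemma \ref{lem: 6.2} in Part 2, exactly as in the paper's proof (which runs the argument of Theorem \ref{thm:3.7} with those lemmas in place of Lemma \ref{lmmay3}). The only cosmetic difference is that you organize the dyadic decomposition by $N(g)=|B\cap g^{-1}B|$ rather than by the number of scalar multiples of a pair $(x,y)$ inside $B\times B$; both devices isolate the same degenerate term $k_1|S|^2$.
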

We note that the bound of $I(P, S)$ in this theorem is smaller than $|P||S|$ provided that $|A|\gg 1$.

\begin{remark}\label{rm24}
    If we remove the factor $p^{\epsilon/4}$ in the incidence bounds of Theorem \ref{incidence-energy'}, then the conditions on the set $S$ are not required.
\end{remark}



\subsection{Incidence mixing in the $SL_2(\mathbb{F}_p)$ action graph (Theorem \ref{incidence})}

Theorem \ref{incidence} will be proved by using tools from discrete Fourier analysis. We first recall some basic notations.

For $n\in \Nbb$, let $f:\Fbb_p^n \rar \Cbb$ be a complex valued function. The Fourier transform of $f$, denoted by $\widehat{f}$, is defined by
\[\widehat{f}(m):=p^{-n}\sum_{x\in\Fbb_p^n} \chi (-m\cdot x)f(x),\]
where $\chi$ is a nontrivial additive character of $\Fbb_p$. We
have the following basic properties of $\widehat{f}$.
\begin{itemize}
    \item The orthogonality property:
\[ \sum_{\alpha \in \Fbb_p^n}\chi (\beta \cdot \alpha) =\begin{cases}
    0 ,& \text{if}\quad \beta \ne (0,\ldots ,0) ,\\
    p^n ,& \text{if}\quad \beta = (0,\ldots ,0) .
\end{cases}\]
\item The Fourier inversion formula:
    \[ f(x)= \sum_{m\in \Fbb_p^n}\chi (m\cdot x) \widehat{f}(m). \]
\item The Plancherel formula:
    \[ \sum_{m\in \Fbb_p^n} \lv \widehat{f} (m)\rv^2 =p^{-n} \sum_{x\in \Fbb_p^n} \lv f(x) \rv^2. \]
\end{itemize}

For $A\subseteq \Fbb_p^d $, by abuse of notation, we also denote its characteristic function by $A(x)$, i.e. $A(x)=1$ if $x\in A$ and $A(x)=0$ if $x\not\in A$.  


To proceed further, we need three lemmas. For each $x =\l( x_1,x_2 \r) \in \Fbb_p^2$, we define $x^\perp = \l( -x_2,x_1 \r)$. Note that $x\cdot y^\perp$ measures the area of the triangle with three vertices $x$, $y$, and the origin.  The first lemma presents a fact that the group $SL_2(\mathbb{F}_p)$ preserves areas of triangles with one vertex pinned at the origin.
\begin{lemma}\label{lmmay2}
Let $x, y, u, v$ be points in $\mathbb{F}_p^2\setminus\{(0, 0)\}$. If there exists $\theta \in \SL_2(\mathbb{F}_p)$ such that $\theta x=u$ and $\theta y=v$, then $x\cdot y^\perp=u\cdot v^\perp$. In the inverse direction, if $x$ and $y$ are not on the same line passing through the origin and $x\cdot y^\perp=u\cdot v^\perp$, then there exists unique $\theta\in SL_2(\mathbb{F}_p)$ such that $\theta x=u$ and $\theta y=v$. 
\end{lemma}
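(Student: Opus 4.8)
The plan is to translate the quantity $x\cdot y^\perp$ into a determinant and then exploit both the multiplicativity of the determinant and the normalization $\det\theta=1$. Writing $[\,a\mid b\,]$ for the $2\times 2$ matrix whose columns are $a,b\in\Fbb_p^2$, a direct computation with $y^\perp=(-y_2,y_1)$ gives
\[ x\cdot y^\perp = x_2 y_1 - x_1 y_2 = -\det[\,x\mid y\,]. \]
This single identity is what the entire argument rests on, so the forward direction is essentially a one-line consequence of it.

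For the forward direction, I would suppose $\theta\in\SL_2(\Fbb_p)$ satisfies $\theta x=u$ and $\theta y=v$. Then $[\,u\mid v\,]=\theta\,[\,x\mid y\,]$, so by multiplicativity $\det[\,u\mid v\,]=\det\theta\cdot\det[\,x\mid y\,]=\det[\,x\mid y\,]$, where the last equality uses $\det\theta=1$. Applying the identity above to both sides yields exactly $u\cdot v^\perp = x\cdot y^\perp$.

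For the inverse direction, the hypothesis that $x$ and $y$ do not lie on a common line through the origin means precisely that they are linearly independent over $\Fbb_p$, so $M:=[\,x\mid y\,]$ is invertible and $D:=\det M=-\,x\cdot y^\perp\neq 0$. The assumption $x\cdot y^\perp=u\cdot v^\perp$ forces $\det[\,u\mid v\,]=D$ as well, so I would simply set $\theta:=[\,u\mid v\,]\,M^{-1}$. Reading off the columns of $\theta M=[\,u\mid v\,]$ shows $\theta x=u$ and $\theta y=v$, while $\det\theta=\det[\,u\mid v\,]/\det M=D/D=1$, so $\theta\in\SL_2(\Fbb_p)$. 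Uniqueness is immediate: any $\theta'$ with $\theta' x=u$ and $\theta' y=v$ satisfies $\theta' M=[\,u\mid v\,]=\theta M$, and cancelling the invertible $M$ gives $\theta'=\theta$.

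There is no serious obstacle here, since the content is elementary linear algebra over $\Fbb_p$; the only points requiring care are the sign/normalization convention relating $x\cdot y^\perp$ to $\det[\,x\mid y\,]$, and the observation that the linear-independence hypothesis on $(x,y)$ is exactly what makes $M$ invertible (and in turn forces $u,v$ to be independent, since $\det[\,u\mid v\,]=D\neq 0$). It is worth emphasizing that this independence hypothesis is genuinely necessary for uniqueness: if $x$ and $y$ were proportional, then $M$ would be singular and $\theta$ would no longer be determined, which is consistent with the large stabilizers exploited in Observation~1.
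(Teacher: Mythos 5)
Your proof is correct and follows essentially the same route as the paper's: the identity $x\cdot y^\perp=-\det[\,x\mid y\,]$ combined with multiplicativity of the determinant is a streamlined repackaging of the paper's explicit coordinate expansion, and your construction $\theta=[\,u\mid v\,][\,x\mid y\,]^{-1}$ is exactly the paper's ``matrix mapping the basis $\{x,y\}$ to the basis $\{u,v\}$.'' The only differences are cosmetic; you additionally spell out the uniqueness argument (cancelling the invertible matrix $[\,x\mid y\,]$), which the paper leaves implicit.
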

\begin{proof}
    Assume there exists $\theta \in \SL_2(\mathbb{F}_p)$ such that $\theta x=u$ and $\theta y=v$. Writing $\theta$ in the form 
    \[ \theta = \begin{bmatrix}
        a & b \\
        c & d
    \end{bmatrix} ,\]
    where $ad-bc=1$, $x=(x_1 ,x_2)$, and $y= (y_1 ,y_2)$. We have $u=(ax_1 +bx_2,cx_1 +dx_2) ,v = (ay_1 +by_2,cy_1 +dy_2) $. Then, 
    \begin{align*}
        u\cdot v^\perp &= -(ax_1+bx_2)(cy_1+dy_2) + (cx_1+dx_2)(ay_1+by_2) \\
        &= -(ad-bc)x_1y_2 +(ad-bc)x_2y_1 = x\cdot y^\perp .
    \end{align*}

In the inverse direction, since $x \ne ky$ for all $k \in \Fbb_p$, we have $x\cdot y^\perp \ne 0$. Indeed, writing $x$ as $x=\l( x_1,x_2 \r) \ne (0, 0)$. Since $x \ne (0, 0)$, we can assume that $x_1 \ne 0$. Therefore, if $y=\l( y_1,y_2 \r)$ satisfies $x\cdot y^\perp =0$, then $x$ and $y$ belong to a line passing through the origin, a contradiction. Similarly, we obtain $u\cdot v^\perp\ne 0$. Let $\theta$ be the matrix that maps the basis  $\lo x,y\ro$ to the basis $\lo u,v\ro$. We show that $\theta \in \SL_2(\mathbb{F}_p)$. Indeed, we write $\theta = \begin{bmatrix}
        a & b\\ 
        c & d
    \end{bmatrix}$ and $x= (x_1 ,x_2)$, and $y=(y_1 ,y_2)$. This implies $u=(ax_1 +bx_2,cx_1 +dx_2)$ and $v = (ay_1 +by_2,cy_1 +dy_2) $. Therefore, $x \cdot y^\perp = u\cdot v^\perp = (ad-bc)x\cdot y^\perp$, so $ad-bc=1$. In other words, $\theta\in SL_2(\mathbb{F}_p)$. 
\end{proof}

The next lemma tells us that the action of $SL_2(\mathbb{F}_p)$ on the plane $\mathbb{F}_p^2$ is transitive with multiplicity of $\sim p$. We give a general proof for the case $SL_n(\mathbb{F}_p)$. 
\begin{lemma}\label{lmmay1}
    For any $m,m'\in \Fbb_p^2 \setminus \lo (0,0)\ro $, define
    \[\M_{2,p}\l( m, m' \r) := \lo T\in \SL_2 \l( \Fbb_p \r) \colon Tm=m' \ro . \]
   Then  $|\M_{2,p}\l( m, m' \r)|= p.$
\end{lemma}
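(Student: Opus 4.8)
The plan is to view $\M_{2,p}(m,m')$ as a coset of a point stabilizer and count it through the orbit--stabilizer theorem, so that the whole statement reduces to the transitivity of the action together with the order of the group. This is also the form that generalizes cleanly to $\SL_n(\Fbb_p)$, matching the remark preceding the lemma.

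First I would record the coset reduction. If $T_0 \in \SL_2(\Fbb_p)$ satisfies $T_0 m = m'$, then for any $T$ we have $Tm = m'$ if and only if $T_0^{-1}T$ fixes $m$; hence
\[ \M_{2,p}(m,m') = T_0 \cdot \M_{2,p}(m,m), \]
a left coset of the stabilizer $\M_{2,p}(m,m)$ of $m$. In particular $|\M_{2,p}(m,m')| = |\M_{2,p}(m,m)|$ as soon as the former is nonempty, so it remains to prove nonemptiness (transitivity) and to compute the stabilizer size.

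Second, transitivity: given $m \neq (0,0)$ I extend it to a basis $\{m, w\}$ of $\Fbb_p^2$, and likewise extend $m'$ to a basis $\{m', w'\}$. The unique linear map sending $m \mapsto m'$ and $w \mapsto w'$ is invertible with some determinant $\delta \in \Fbb_p^*$; replacing $w'$ by $\delta^{-1}w'$ yields a map of determinant $1$ that still sends $m \mapsto m'$. This furnishes an element of $\SL_2(\Fbb_p)$ inside $\M_{2,p}(m,m')$, and the same basis-extension-and-rescaling argument works verbatim for $\SL_n(\Fbb_p)$. Then, since $|\SL_2(\Fbb_p)| = p(p^2-1)$ and the orbit of $m$ is all of $\Fbb_p^2 \setminus \{(0,0)\}$, of size $p^2-1$, the orbit--stabilizer theorem gives $|\M_{2,p}(m,m)| = p(p^2-1)/(p^2-1) = p$, as claimed. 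For general $n$ the same scheme yields $|\SL_n(\Fbb_p)|/(p^n-1)$.

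I expect no serious obstacle here: the only points requiring care are the determinant normalization in the transitivity step and the value $|\SL_2(\Fbb_p)| = p(p^2-1)$, both routine. As a self-contained alternative that avoids the group order entirely, one can count directly: writing $T=\begin{bmatrix} a&b\\ c&d\end{bmatrix}$, the condition $Tm=m'$ splits into two independent affine constraints, one on $(a,b)$ and one on $(c,d)$ (independent precisely because $m\neq(0,0)$), giving a $p^2$-element family parametrized by $(s,u)\in\Fbb_p^2$; a short computation shows that $\det T$ is then an affine function of the form $\delta_0 + s\,m_2' - u\,m_1'$, whose linear part is nonzero exactly because $m'\neq(0,0)$, so imposing $\det T = 1$ selects precisely $p$ of these matrices.
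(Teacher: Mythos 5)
Your proof is correct and follows essentially the same route as the paper: transitivity via extending $m$ to a basis and rescaling to fix the determinant, the coset identification of $\M_{2,p}(m,m')$ with the stabilizer, and the orbit--stabilizer count $p(p^2-1)/(p^2-1)=p$. The self-contained direct count at the end is a valid bonus not in the paper, but the core argument matches.
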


\begin{proof}
It follows from \cite[Theorem 13.3.3]{M.P.13} that $\SL_2(\mathbb{F}_p)$ is a group of size
    \[\lv \SL_2  \l( \Fbb_p \r) \rv = p\l( p^2-1 \r) .\]
Considering the group action of $\SL_2 \l( \Fbb_p \r) $ on $\Fbb_p^2$, $A : x \mapsto Ax , \forall A \in \SL_2 \l( \Fbb_p \r) ,x\in \Fbb_p^2$. For $m \ne (0,0)$, let $\text{Orb}(m)$ be the orbit of $m$, i.e. $\text{Orb}(m)=\{Tm\colon T\in \SL_2(\Fbb_p)\}$. Since $m\ne (0,0)$, there exists $m_1' \in \Fbb_p^2 \setminus \{ (0,0) \}$ such that 
$\lo m, m_1'\ro$ forms a linear independent system. So, let  
\[ T_m' = \begin{bmatrix}
    m&  m_1'
\end{bmatrix},\]
we have $\det T_m' = \lambda_m \ne 0$. Let $m_1= \lambda_m^{-1}m_1'$,  and
\[ T_m = \begin{bmatrix}
    m&  m_1
\end{bmatrix}.\]
Then,  $\det T_m = \lambda_m \cdot \lambda_m^{-1}=1$, so $T_m \in \SL_2(\Fbb_p)$. Now, for all $m' \in \Fbb_p^2\setminus \lo (0,0) \ro$, we observe that $T_{m'} \circ (T_m)^{-1}\in \SL_2(\Fbb_p)$, and $T_{m'} \circ (T_m)^{-1}(m)=m'$ so $\M_{2,p}(m,m') \ne \emptyset , \forall \, m,m' \ne (0,0)$. 

For $m, m'\in \mathbb{F}_p^2\setminus \{(0,0)\}$, let $T$ be an element in $\M_{2,p}(m, m')$. Then for all $T'\in \M_{2,p}(m, m')$, there exists $A\in \text{Stab}(m)$ such that $TA=T'$. This implies that $|\M_{2,p}\l( m,m' \r)|=\lv \text{Stab}(m) \rv $ for all $m,m'\in \Fbb_p^2 \setminus \{(0,0)\} $.

Moreover, for any $m\ne (0,0)$, there is no $T \in \SL_2(\Fbb_p)$ such that $T m =(0,0)$. Hence, we have $\text{Orb}(m) = \Fbb_p^2 \setminus \lo (0,0)\ro$. Thus, 
by the Orbit-Stabilizer theorem, 
\[ \lv \text{Stab}(m) \rv =\frac{\lv \SL_2 \l( \Fbb_p \r) \rv}{\lv \text{Orb} (m)\rv} = \frac{p(p^2-1)}{p^2-1}=p .\]This completes the proof. 
\end{proof}

The next lemma is an $L^2$ bound for the dot-product function.

\begin{lemma}[\cite{P.V.17}]\label{lmmay3}
    Let $A$ and $B$ be subsets of $\, \mathbb{F}_p^2$. The number of tuples $(x_1, x_2, y_1, y_2)\in A\times A\times B\times B$ such that $x_1\cdot x_2^{\perp}=y_1\cdot y_2^{\perp}$ is at most 
    \begin{equation}\label{eq111}\frac{|A|^2|B|^2}{p}+Cp^2|A||B|,\end{equation}
    for some positive constant $C$. 
\end{lemma}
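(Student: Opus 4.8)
The plan is to reduce the count to a single $L^2$-estimate for an exponential sum via the additive characters introduced above, extract the main term from the trivial frequency, and control the rest by Cauchy--Schwarz. Let $N$ denote the number of tuples in the statement, and for $s\in\mathbb{F}_p$ put
\[ R_A(s):=\sum_{x_1,x_2\in A}\chi\!\left(s\,x_1\cdot x_2^{\perp}\right),\qquad R_B(s):=\sum_{y_1,y_2\in B}\chi\!\left(s\,y_1\cdot y_2^{\perp}\right). \]
Expanding the indicator of the equation $x_1\cdot x_2^{\perp}=y_1\cdot y_2^{\perp}$ by orthogonality gives $N=\frac1p\sum_{s}R_A(s)\overline{R_B(s)}$. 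The frequency $s=0$ contributes the main term $\frac{|A|^2|B|^2}{p}$, while Cauchy--Schwarz in $s$ bounds the remaining sum by $\frac1p\big(\sum_{s\neq0}|R_A(s)|^2\big)^{1/2}\big(\sum_{s\neq0}|R_B(s)|^2\big)^{1/2}$. Hence the whole estimate follows once I prove the single-set bound
\[ \sum_{s\neq0}|R_A(s)|^2\ \ll\ p^3|A|^2 \]
(and the analogue for $B$): substituting it yields precisely $N\le\frac{|A|^2|B|^2}{p}+Cp^2|A||B|$.

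To prove the displayed bound I first move $R_A$ to the Fourier side. Because $x_1\cdot x_2^{\perp}=-x_1^{\perp}\cdot x_2$ is linear in $x_2$, summing over $x_2\in A$ gives $\sum_{x_2\in A}\chi(s\,x_1\cdot x_2^{\perp})=p^2\widehat{A}(s\,x_1^{\perp})$, so that
\[ R_A(s)=p^2\sum_{x_1\in A}\widehat{A}(s\,x_1^{\perp}),\qquad \sum_{s\neq0}|R_A(s)|^2=p^4\sum_{x_1,x_1'\in A}\ \sum_{s\neq0}\widehat{A}(s\,x_1^{\perp})\,\overline{\widehat{A}(s\,x_1'^{\perp})}. \]
The essential step is to apply Cauchy--Schwarz in the summation variable $s$ (not in $x_1$): writing $\ell_x:=\mathbb{F}_p\,x^{\perp}$ for the line through the origin spanned by $x^{\perp}$ and $P(\ell):=\sum_{\xi\in\ell\setminus\{0\}}|\widehat{A}(\xi)|^2$, the inner sum is at most $P(\ell_{x_1})^{1/2}P(\ell_{x_1'})^{1/2}$, because as $s$ runs over $\mathbb{F}_p^{*}$ the frequency $s\,x^{\perp}$ runs over $\ell_x\setminus\{0\}$. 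Summing over $x_1,x_1'$ factorises the right-hand side as $\big(\sum_{x_1\in A}P(\ell_{x_1})^{1/2}\big)^2$.

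It remains to estimate $\sum_{x_1\in A}P(\ell_{x_1})^{1/2}$. Grouping the points of $A$ by the line through the origin they determine, and using that each such line contains at most $p$ points, each nonzero frequency $\xi$ is counted with multiplicity at most $p$, whence $\sum_{x_1\in A}P(\ell_{x_1})\le p\sum_{\xi\neq0}|\widehat{A}(\xi)|^2\le p\cdot\frac{|A|}{p^2}$ by Plancherel. A further Cauchy--Schwarz gives $\sum_{x_1\in A}P(\ell_{x_1})^{1/2}\le|A|^{1/2}\big(|A|/p\big)^{1/2}=|A|/\sqrt p$, and therefore $\sum_{s\neq0}|R_A(s)|^2\le p^4\cdot|A|^2/p=p^3|A|^2$. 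The term $x_1=0$ is constant in $s$ and contributes a harmless $O(p|A|^2)$, absorbed into the constant.

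The main obstacle is exactly this last bound with the sharp power of $p$. The naive route---Cauchy--Schwarz over $x_1\in A$ followed by Plancherel---yields only $p^2|A|^3$, which is too weak as soon as $|A|>p$; the saving comes from summing in $s$ first and exploiting that the frequencies $\{s\,x_1^{\perp}\}$ lie on lines through the origin, so the Plancherel mass $\sum_{\xi\neq0}|\widehat{A}(\xi)|^2=|A|/p^2$ is spread over those lines with multiplicity at most $p$. I expect the only delicate bookkeeping to be the degenerate contributions (the frequency $s=0$, the origin lying in $A$, and the collinear pairs giving $x_1\cdot x_2^{\perp}=0$), all of which are lower order.
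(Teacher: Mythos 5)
Your argument is correct. Note that the paper does not prove this lemma at all: it is imported as a black box from \cite{P.V.17}, where it is obtained by a spectral/expander-mixing type argument. Your proof is a self-contained Fourier-analytic derivation, and the key maneuver --- applying Cauchy--Schwarz in the frequency variable $s$ first, so that the Plancherel mass $\sum_{\xi\neq 0}|\widehat{A}(\xi)|^{2}=|A|/p^{2}$ is distributed over the lines $\ell_{x}=\mathbb{F}_p x^{\perp}$ with multiplicity at most $p$ --- is exactly what recovers the sharp error term $p^{2}|A||B|$ rather than the weaker $p^{3/2}|A|^{3/2}|B|$-type bound that the naive order of Cauchy--Schwarz would give. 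The degenerate cases ($s=0$, the origin in $A$ or $B$, collinear pairs) are handled correctly: the $s=0$ frequency produces the main term $|A|^{2}|B|^{2}/p$, and the $x_{1}=0$ contribution to $R_A(s)$ is constant in $s$ and is absorbed into the constant $C$. What your route buys is a proof entirely within the discrete Fourier toolkit the paper already sets up (orthogonality and Plancherel), with no appeal to graph eigenvalues; what the cited route buys is a statement that generalizes readily to other bilinear forms and to finite valuation rings, which is the setting of \cite{P.V.17}.
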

When $P\subset \mathbb{F}_p^4$ is a general set, the above upper bound can be replaced by $\frac{|P|^2}{p}+Cp^2|P|$. Let $k_A$ and  $k_B$ be the maximal number of points from $A$ and $B$ on a line passing through the origin, respectively. Assume that $\min\{k_A, k_B\}=k$, then, with the same argument, the bound (\ref{eq111}) can be replaced by
    \[\frac{|A|^2|B|^2}{p}+Cpk|A||B|.\]


With these three lemmas in hand, we are ready to prove Theorem \ref{incidence}.

\begin{proof}[Proof of Theorem \ref{incidence}]
Without loss of generality, we assume that $(0, 0, 0, 0)\not\in P$, since this element only contributes $|S|$ incidences to the incidence bound. 

    By using the Fourier transformation and the Fourier inversion formula, we have 
    \begin{align*}
        I(P,S) & = \sum_{\substack{p=(x,y )\in P \\ \theta \in S}} 1_{\theta y=x} =\frac{1}{p^2} \sum_{m\in \Fbb_p^2} \sum_{\substack{(x,y)\in P, \\ \theta \in S}} \chi \l( m\cdot \l( x-\theta y \r) \r) \\
        & = \frac{\lv P\rv \lv S\rv}{p^2} + \frac{1}{p^2} \sum_{ m\in \Fbb_p^2 \setminus \lo (0,0)\ro } \sum_{\substack{ (x,y)\in P ,\\ \theta \in S }} \chi \l( m \cdot \l( x -\theta y \r) \r) \\
        & = \frac{\lv P\rv \lv S\rv}{p^2} + p^2\sum_{m\ne (0, 0)} \sum_{\theta \in S} \widehat{P} \l( -m,\theta^t m\r).
    \end{align*}
By using the Cauchy--Schwarz inequality, one has 
\begin{align*}
    &\sum_{\theta\in S}\sum_{m\ne (0, 0)}\widehat{P}(-m, \theta^Tm)\le |S|^{\frac{1}{2}}\left(\sum_{\theta\in \SL_2 \l( \Fbb_p \r)}\sum_{m_1, m_2\ne (0, 0)}\widehat{P}(-m_1, \theta^tm_1)\overline{\widehat{P}(-m_2, \theta^tm_2)}\right)^{\frac{1}{2}}.
\end{align*}
We now observe 
\begin{align*}
    &\sum_{\theta\in \SL_2 \l( \Fbb_p \r)}\sum_{m_1, m_2\ne (0, 0)}\widehat{P}(-m_1, \theta^tm_1)\overline{\widehat{P}(-m_2, \theta^tm_2)}\\&=\frac{1}{p^8}\sum_{\theta}\sum_{m_1, m_2\ne (0, 0)}\sum_{(x_1, y_1), (x_2, y_2)}P(x_1, y_1)P(x_2, y_2)\chi(-m_1x_1+\theta^tm_1y_1)\chi(m_2x_2-\theta^tm_2y_2)\\
    &=\sum_{m_1, m_2}-\sum_{m_1=(0, 0), m_2\ne (0, 0)}-\sum_{m_1\ne (0, 0), m_2=(0, 0)}-\sum_{m_1=m_2=(0, 0)}\\
    &=:I-II-III-IV.
\end{align*}
We now estimate each term separately. 
\begin{align*}
    &I=\frac{1}{p^8}\sum_{\theta}\sum_{m_1, m_2\in \mathbb{F}_p^2}\sum_{(x_1, y_1), (x_2, y_2)}P(x_1, y_1)P(x_2, y_2)\chi(m_1(\theta y_1-x_1))\chi(m_2(\theta y_2-x_2))\\
    &=\frac{1}{p^4}\sum_{\theta}\sum_{(x_1, y_1), (x_2, y_2)}P(x_1, y_1)P(x_2, y_2)1_{\theta y_1=x_1}1_{\theta y_2=x_2}.\\
\end{align*}
By using  Lemma \ref{lmmay2}, one has
\begin{align*}
    &I=\frac{1}{p^4}\sum_{\theta}\sum_{\lambda\in \mathbb{F}_p}\sum_{\substack{x_1, y_1, x_2, y_2\\~y_1=\lambda y_2, x_1=\lambda x_2}}A(x_1)A(x_2)B(y_1)B(y_2)1_{\theta y_1=x_1}1_{\theta y_2=x_2}\\
    &+\frac{1}{p^4}\sum_{x_1, y_1, x_2, y_2}A(x_1)A(x_2)B(y_1)B(y_2)1_{y_1\cdot y_2^\perp=x_1\cdot x_2^\perp}.
\end{align*}
Notice that Lemma \ref{lmmay1} and Lemma \ref{lmmay2} tell us that the first sum can be bound by at most $p^2|A||B|/p^4$.

By Lemma \ref{lmmay3}, the second sum can be at most 
\[\frac{|A|^2|B|^2}{p^5}+C\frac{|A||B|}{p^2}.\]

In other words, we have 
\[I\le \frac{|A|^2|B|^2}{p^5}+(C+1)\frac{|A||B|}{p^2}.\]

Moreover,
\[IV=\frac{(p^3-p)|A|^2|B|^2}{p^8}=\frac{|A|^2|B|^2}{p^5}-\frac{|A|^2|B|^2}{p^7}.\]

Thus, $I-IV\ll \frac{|A||B|}{p^2}$.
Regarding $II$, 
\begin{align*}
    II & =\frac{1}{p^8}\sum_\theta \sum_{( x_1,y_1)} P(x_1,y_1) \l( \sum_{(x_2,y_2)} P(x_2,y_2) \l( \sum_{m_2\ne 0} \chi \l( m_2 \l( x_2 -\theta y_2 \r) \r) \r) \r) \\
    & = \frac{1}{p^8} \sum_\theta \sum_{(x_1,y_1)} P(x_1 ,y_1) \l( \sum_{\substack{(x_2,y_2), \\ x_2 =\theta y_2}} P(x_2,y_2) \l( p^2-1 \r)  - \sum_{ \substack{(x_2,y_2), \\ x_2 \ne \theta y_2 }} P(x_2,y_2)  \r) \\
    & = \frac{1}{p^8} \lv P\rv \l( \l( p^2-1\r)\sum_{\substack{(x_2,y_2) \in P,\\ \theta \in \SL_2 \l( \Fbb_p \r) ,\\ x_2 =\theta y_2}} 1  - \sum_{ \substack{(x_2,y_2) \in P,\\ \theta \in \SL_2 \l( \Fbb_p \r) ,\\ x_2 \ne \theta y_2 }} 1\r) \\
    & = \frac{1}{p^8} \lv P\rv \l( \lv A\rv \lv B\rv p\l( p^2 -1\r) - \lv A\rv \lv B\rv \l( p^3 -2p \r) \r) = \frac{\lv P\rv^2}{p^7}.
\end{align*}

Similarly, we obtain  $III =\frac{\lv P\rv^2}{p^7} .$
Putting all estimates together, we conclude that 
\[\left\vert I(P, S)-\frac{|P||S|}{p^2}\right\vert\ll p\sqrt{|P||S|}+|S|.\]
This completes the proof.
\end{proof}

\subsection{Energy refinement and subgroup non-concentration (Theorem \ref{thm: 4.12})}\label{sectionenergy}


For a set $S\subset SL_2(\mathbb{F}_p)$, we define the energy $\Ebf(S, S)$ by
\[\Ebf(S, S):=\#\{(a, b, c, d)\in S^4\colon ab=cd\}.\]
The trivial bound of $\Ebf(S, S)$ is $|S|^3$. When $S$ is a large set, Babai, Nikolov, and Pyber proved in \cite{B.N.P.08} that
 \begin{equation}\label{eq-energy1} \Ebf \l( S, S \r) \ll p^2 |S|^2 + \frac{|S|^4}{p^3} .\end{equation}
This bound is sharp. To see its sharpness, we provide an example here.

  Let $S$ be the set of matrices of the form
    \[ \begin{bmatrix}
        \ast & -x \\
        x^{-1} & 0
    \end{bmatrix}, \]
    where $x,\ast \in \Fbb_p \setminus \{ 0 \}$. Then, $S$ is a subset of $\SL_2 \l( \Fbb_p \r)$ and $\lv S\rv =(p-1)^2$. We consider following equation
    \begin{align}\label{eq: 3}
        \begin{bmatrix}
        \ast_1 & -x \\
        x^{-1} & 0
    \end{bmatrix} \begin{bmatrix}
        \ast_2 & -y \\
        y^{-1} & 0
    \end{bmatrix} =  \begin{bmatrix}
        \ast_1' & -x' \\
        (x')^{-1} & 0
    \end{bmatrix} 
    \begin{bmatrix}
        \ast_2' & -y' \\
        (y')^{-1} & 0
    \end{bmatrix},
    \end{align} 
    where all matrices are in $S$. The equation is equivalent to 
    \[ \begin{cases}
        \ast_1 \ast_2 -xy^{-1} = \ast_1' \ast_2 ' -x'(y')^{-1}, \\
        -y\ast_1 = -y' \ast_1' ,\\
        \ast_2 x^{-1} =\ast_2'(x')^{-1} ,\\
        -yx^{-1} =-y' x^{-1}.
    \end{cases} \]
    This implies $\frac{y}{y'}=\frac{x}{x'}=\frac{\ast_2}{\ast_2'}=\frac{\ast_1'}{\ast_1}$. Therefore, for fixed $\ast_1 , \ast_1' ,x,x'$, there exist $(p-1)^2$ tuples $\l( y,y',\ast_2,\ast_2' \r) \in \l( \Fbb_p \setminus \{ 0\} \r)^4$ such that the equation (\ref{eq: 3}) holds. In other words, for each pair of matrices $(A,C) \in S^2$, there exist $(p-1)^2$ pairs of matrices $(B,D) \in S^2$ such that $AB=CD$. Hence,
    \[ \Ebf (S,S) \gg \lv S\rv^2 (p-1)^2 =(p-1)^6 \sim p^2 (p-1)^4 + \frac{(p-1)^8}{p^3} = p^2\lv S\rv^2 + \frac{\lv S\rv^4}{p^3}. \]
When the set $S$ is of small size, one would hope to have an upper bound of $\Ebf(S, S)$ that does not depend on $p$. In this paper, we make use of the following result due to Bourgain and Gamburd in \cite{B.A.08} to derive such a bound over prime fields.
\begin{theorem}[Proposition $2$, \cite{B.A.08}]\label{thm: 7.1}
Let $p$ be a sufficiently large prime, and let $\eta$ be a symmetric probability measure on $\SL_2(\Fbb_p)$ and $0< \gamma < \frac{3}{4}$, such that
\begin{itemize}
\item[(1)] $\lV \eta  \rV_{\infty} < p^{-\gamma} ;$
\item[(2)] $\eta(gH)<p^{\frac{-\gamma }{2}} $ for any proper subgroup $H\subset SL_2( \Fbb_p ), g\in SL_2( \Fbb_p );$
\item[(3)] $\Vert \eta\Vert_2> p^{\frac{-3}{2} +\gamma}$.
\end{itemize}
Then there exists $\epsilon = \epsilon \l( \gamma \r) >0$ such that
$$
\Vert\eta*\eta\Vert_2 < p^{-\epsilon} \Vert\eta\Vert_2.
$$
\end{theorem}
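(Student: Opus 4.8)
The plan is to recognize this as the $\ell^2$-flattening lemma of Bourgain and Gamburd and to prove it by contradiction, exploiting the dichotomy \emph{``either the self-convolution spreads in $\ell^2$, or $\eta$ concentrates near a coset of a proper subgroup''}, the second alternative being ruled out precisely by hypothesis (2). So I would assume the conclusion fails, i.e.\ $\lV \eta * \eta\rV_2 \geq p^{-\epsilon}\lV\eta\rV_2$ for a small parameter $\epsilon=\epsilon(\gamma)$ to be fixed at the end, and aim to contradict the size and non-concentration assumptions.

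The first step is to convert this analytic hypothesis into a combinatorial statement about multiplicative energy. Since $(\eta*\eta)(g)=\sum_{ab=g}\eta(a)\eta(b)$, expanding the square gives $\lV\eta*\eta\rV_2^2=\sum_{ab=cd}\eta(a)\eta(b)\eta(c)\eta(d)$, so the failure of flattening says exactly that this weighted multiplicative energy is large, of order at least $p^{-2\epsilon}\lV\eta\rV_2^2$. I would then record the two scales forced by the hypotheses: from (1), $\lV\eta\rV_2^2\le\lV\eta\rV_\infty<p^{-\gamma}$, while from (3), $\lV\eta\rV_2^{-2}<p^{3-2\gamma}$; hence the effective support $N:=\lV\eta\rV_2^{-2}$ lies in the window $p^{\gamma}<N<p^{3-2\gamma}$, which is precisely the regime in which growth results for $\SL_2(\Fbb_p)$ have content (neither a tiny set nor essentially the whole group).

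Next I would run a dyadic pigeonholing to replace $\eta$ by an essentially uniform measure on a level set $A=\lo g\colon \eta(g)\sim\rho\ro$ with $|A|\approx N$ (up to logarithmic factors) carrying a constant proportion of the mass and of the energy, and then apply the non-commutative Balog--Szemer\'edi--Gowers theorem: large multiplicative energy yields a subset $A'\subseteq A$ with small tripling, $|A'A'A'|\le p^{O(\epsilon)}|A'|$ and $|A'|\ge p^{-O(\epsilon)}|A|$, so $A'$ still has size in the window $(p^{\gamma-O(\epsilon)},\,p^{3-2\gamma})$. Now comes the decisive input, Helfgott's product theorem in $\SL_2(\Fbb_p)$: a subset of size below $|\SL_2(\Fbb_p)|^{1-\delta}$ whose tripling constant $|A'A'A'|/|A'|$ is bounded by a small power of $|A'|$ must be essentially trapped in a proper subgroup. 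The non-concentration hypothesis (2), transported from $\eta$ to $A'$ through the dyadic reduction, forbids this, so the product theorem forces genuine growth $|A'A'A'|\ge|A'|^{1+c}$ for some $c=c(\gamma)>0$. Comparing this with the small-tripling bound from Balog--Szemer\'edi--Gowers gives $|A'|^{c}\le p^{O(\epsilon)}$, and since $|A'|\ge p^{\gamma-O(\epsilon)}$ this reads $p^{c\gamma}\lesssim p^{O(\epsilon)}$, a contradiction once $\epsilon$ is chosen small enough in terms of $c$ and $\gamma$. This furnishes the desired $\epsilon=\epsilon(\gamma)$.

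The main obstacle is the product theorem itself: its proof rests on the sum--product phenomenon in $\Fbb_p$ together with a delicate analysis of how subsets of $\SL_2(\Fbb_p)$ expand under multiplication via traces and conjugacy classes, and it is this ingredient that makes the statement genuinely non-abelian and deep. A secondary but real difficulty is the bookkeeping in the Balog--Szemer\'edi--Gowers and dyadic steps: one must ensure that the extracted set $A'$ faithfully inherits the non-concentration property (2) and that the accumulated $p^{O(\epsilon)}$ losses never swamp the $p^{c\gamma}$ gain. In practice, as the paper notes, all of this is exactly the content of \cite{B.A.08}, so we invoke it as a black box rather than reproving it.
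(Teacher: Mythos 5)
Your proposal is an accurate outline of the actual Bourgain--Gamburd argument (energy reformulation, dyadic pigeonholing, non-commutative Balog--Szemer\'edi--Gowers, Helfgott's product theorem, with hypothesis (2) ruling out the subgroup-concentration alternative), and it correctly ends by deferring to \cite{B.A.08} for the substance. This matches the paper exactly: the statement is imported verbatim as Proposition~2 of \cite{B.A.08} and used as a black box, so no gap arises as long as the citation carries the load, which it does.
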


Moreover, following the proof of Theorem \ref{thm: 7.1}, we have $\epsilon < \gamma$. 

For $0< \gamma <\frac{3}{4}$, let $S$ be a symmetric subset of $\SL_2 \l( \Fbb_p \r)$ such that $p^{\gamma}<|S|<p^{3-2\gamma}$ and for any proper subgroup $H \subsetneq \SL_2 \l( \Fbb_p \r) $, $g \in \SL_2 \l( \Fbb_p \r)$ we have $\lv S\cap gH\rv < p^{\frac{-\gamma }{2}}\lv S\rv $. Let $\mu_S : \SL_2 \l( \Fbb_p \r) \rar \Rbb $ be the function defined by $\mu_S \l( g \r) =\frac{1}{\lv S\rv}$ if $g\in S$ and $\mu (g)=0$ if $g \notin S$. Then, $\mu_S$ satisfies all conditions of Theorem \ref{thm: 7.1}. Indeed, we have 
\begin{itemize}
    \item[(1)] $\lV \mu_S \rV_\infty = \max_{g} \mu_S(g) = \frac{1}{|S|} < p^{-\gamma }$,
    \item[(2)] $\mu_S (gH) = \frac{|S\cap gH|}{|S|} < p^{\frac{-\gamma }{2}}$, for any proper subgroup $H \subset \SL_2(\Fbb_p)$ and $g\in \SL_2(\Fbb_p)$,
    \item[(3)] $\lV \mu_S \rV_2 = \l( \sum_{g\in \SL_2(\Fbb_p)} \mu_S(g)^2 \r)^{\frac{1}{2}} = \frac{1}{|S|^{\frac{1}{2}}} > p^{\frac{-3}{2}+\gamma}.$
\end{itemize}
Therefore,
\[ \lV \mu_S \ast \mu_S \rV_2 < p^{-\epsilon} \lV \mu_S \rV_2 = p^{-\epsilon} \frac{1}{\lv S\rv^{\frac{1}{2}}}. \]

One the other hand, $\mu_S \ast \mu_S (g) = \frac{\lv  \lo (a,b)\in S \times S \colon ab=g \ro \rv}{\lv S\rv^2} $. Then,
\[ \lV \mu_S \ast \mu_S \rV_2^2 = \frac{\Ebf \l( S,S \r)}{\lv S\rv^4}.\]
In other words, we have proved the following corollary. 
\begin{corollary}\label{co:36}
For any $\gamma\in (0, 1)$, there exists $\epsilon=\epsilon(\gamma)>0$ such that the following holds.
 Let $p$ be a sufficiently large prime, let $S$ be a symmetric subset of $\SL_2 (\Fbb_p )$ such that $p^\gamma < |S| < p^{3-2\gamma } $ and $ |S \cap gH| < p^{\frac{-\gamma}{2}}|S|$ for any subgroup $H \subsetneq \SL_2 (\Fbb_p) $ and $g\in \SL_2(\Fbb_p)$. Then, we  have
\begin{equation}
\Ebf \l( S,S \r) < \frac{\lv S\rv^3}{p^{\epsilon}}.
\end{equation}    
\end{corollary}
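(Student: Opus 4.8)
The plan is to translate the combinatorial energy $\Ebf(S,S)$ into the analytic language of probability measures on $\SL_2(\Fbb_p)$ and then invoke the $L^2$-flattening result of Bourgain and Gamburd (Theorem \ref{thm: 7.1}) as a black box. Concretely, I would attach to $S$ its normalized indicator $\mu_S$, defined by $\mu_S(g)=\frac{1}{\lv S\rv}$ for $g\in S$ and $\mu_S(g)=0$ otherwise. Since $S$ is symmetric, $\mu_S$ is a symmetric probability measure, so it is a legitimate candidate for the hypotheses of Theorem \ref{thm: 7.1}.

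The first step is to verify that $\mu_S$ satisfies the three conditions of Theorem \ref{thm: 7.1}. For condition (1), $\lV \mu_S \rV_\infty = 1/\lv S\rv < p^{-\gamma}$ is immediate from $\lv S\rv > p^{\gamma}$. For condition (2), $\mu_S(gH)=\lv S\cap gH\rv/\lv S\rv < p^{-\gamma/2}$ is precisely the spreading hypothesis placed on $S$. For condition (3), $\lV \mu_S \rV_2 = \lv S\rv^{-1/2} > p^{-3/2+\gamma}$ is equivalent to $\lv S\rv < p^{3-2\gamma}$, which is again a hypothesis. The only mismatch is that Theorem \ref{thm: 7.1} demands $\gamma < 3/4$ while the corollary allows $\gamma\in(0,1)$; I would dispose of this by the observation that the hypotheses on $S$ are monotone in $\gamma$ (if $S$ satisfies them for $\gamma$, it satisfies them for every smaller $\gamma'$, since lowering $\gamma$ weakens the size window and weakens the spreading bound). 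Thus for $\gamma\ge 3/4$ one simply applies the theorem with a fixed $\gamma'\in(0,3/4)$, and may assume $\gamma<3/4$ throughout.

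With the hypotheses checked, Theorem \ref{thm: 7.1} produces $\epsilon=\epsilon(\gamma)>0$ with $\lV \mu_S \ast \mu_S \rV_2 < p^{-\epsilon}\lV \mu_S \rV_2 = p^{-\epsilon}\lv S\rv^{-1/2}$. The final step is the combinatorial dictionary between this norm and the energy. Since $\mu_S\ast\mu_S(g)=\lv\lo (a,b)\in S\times S\colon ab=g\ro\rv/\lv S\rv^2$, the Plancherel-type identity $\lV \mu_S\ast\mu_S\rV_2^2 = \Ebf(S,S)/\lv S\rv^4$ holds after summing squares over $g\in\SL_2(\Fbb_p)$. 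Substituting and squaring the flattening inequality gives $\Ebf(S,S) < p^{-2\epsilon}\lv S\rv^3$, and since $p^{-2\epsilon}<p^{-\epsilon}$ this yields the claimed bound $\Ebf(S,S)<\lv S\rv^3/p^{\epsilon}$ (after harmlessly relabeling the exponent).

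The only genuine mathematical input is Theorem \ref{thm: 7.1}, so there is no hard analytic obstacle in this corollary itself; the real work is bookkeeping. Accordingly, the main things to be careful about are matching the three normalized inequalities to the size window $p^\gamma<\lv S\rv<p^{3-2\gamma}$ and the spreading condition exactly, handling the $\gamma<3/4$ restriction by monotonicity, and tracking the factor of $2$ in the exponent that surfaces when the flattening bound is squared to expose $\Ebf(S,S)$.
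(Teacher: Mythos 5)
Your proposal is correct and follows essentially the same route as the paper: normalize $S$ to the measure $\mu_S$, check the three hypotheses of the Bourgain--Gamburd flattening theorem (handling $\gamma\ge 3/4$ by monotonicity in $\gamma$, exactly as the paper does), and convert $\lV\mu_S\ast\mu_S\rV_2^2=\Ebf(S,S)/\lv S\rv^4$ into the energy bound. Your remark that squaring actually yields $p^{-2\epsilon}\lv S\rv^3$ is a harmless strengthening that the paper absorbs into the choice of $\epsilon$.
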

\begin{proof}
    For any $\gamma\in (0, \frac{3}{4})$, it follows directly from the previous computation. For any $\gamma\in [3/4, 1)$, we observe that 
    \[(p^\gamma, p^{3-2\gamma})\subset (p^{\gamma'}, p^{3-2\gamma'}), ~~p^{-\frac{\gamma}{2}}|S|\le p^{-\frac{\gamma'}{2}}|S|,\]
    for any $\gamma'\in (0, 3/4)$. Then, by choosing $\epsilon=\epsilon(\gamma')$ for any $\gamma'\in (0, 3/4)$, the corollary follows.
\end{proof}

 Let $A \times B \subseteq \Fbb_p^2 \times \Fbb_p^2 $ and $S\subseteq \SL_2 \l( \Fbb_p \r)$. Then,
    \[ I(A\times B,S) \le N^{\frac{1}{2}}\lv A\rv^{\frac{1}{2}} ,\]
    where $N$ is the number of $\l( b,b',\theta ,\theta' \r) \in B \times B \times S \times S$ such that $\theta b  =\theta 'b'$.

Indeed,  for each $\l( a,b \r) \in A\times B$, denote $s_{(a,b)} $ as the number of $\theta \in S$ such that $\theta b=a$. Therefore,
    \begin{align*}
        I(P,S) & = \sum_{a\in A} \l( \sum_{b\in B} s_{(a,b)} \r)  \le  \lv A\rv^{\frac{1}{2}} \l( \sum_{a\in A} \l( \sum_{b\in B} s_{(a,b)} \r)^2 \r)^{\frac{1}{2}} \\
        & =  \lv A\rv^\frac{1}{2} \l( \sum_{a\in A} \lv \lo \l( b,b',\theta ,\theta' \r) \in B\times B \times S \times S \colon \theta b=\theta 'b' =a \ro \rv \r)^{\frac{1}{2}} \\
        & \le \lv A\rv^{\frac{1}{2}} \cdot N^{\frac{1}{2}}.
    \end{align*}
To bound $N$, we observe that the equation $\theta b=\theta'b'$ gives $(\theta')^{-1}\theta b=b'$. So, $N$ can be viewed as the number of incidences between $B\times B$ and the multi-set $S^{-1}S$.  If $(0, 0) \notin B$, by following the proof of Theorem \ref{incidence} identically, we obtain
\[N\ll  \frac{\lv B\rv^2 \lv S\rv^2}{p^2} +p\lv B\rv (\Ebf (S,S))^{\frac{1}{2}},\]
and if any line passing through the origin contains at most $k$ points from $B$, then
\[ N \ll  \frac{\lv B\rv^2 \lv S\rv^2}{p^2} +p^{\frac{1}{2}}k^{\frac{1}{2}}\lv B\rv (\Ebf (S,S))^{\frac{1}{2}}.\]

As mentioned in (\ref{eq-energy1}), 
 \[ \Ebf \l( S, S \r) \ll p^2 |S|^2 + \frac{|S|^4}{p^3} .\]
Substituting this bound into $I(A\times B, S)$ implies 
 \[ I(A\times B ,S)\ll \frac{|A|^{\frac{1}{2}}|B||S|}{p}+\frac{|A|^{\frac{1}{2}}|B|^{\frac{1}{2}}|S|}{p^{\frac{1}{4}}}+p|A|^{\frac{1}{2}}|S|^{\frac{1}{2}}|B|^{\frac{1}{2}}.\]

Compared to the bound of Theorem \ref{incidence}, this result is weaker. 

However, if we use Corollary \ref{co:36} instead, then
    \[ I(P,S) \ll \frac{|A|^{\frac{1}{2}}|B||S|}{p} + p^{\frac{2-\epsilon}{4}}|A|^{\frac{1}{2}}|B|^{\frac{1}{2}}|S|^{\frac{3}{4}}.\]
Moreover,  if any line passing through the origin contains at most $k$ points from $B$, then
    \[ I(P,S) \ll \frac{|A|^{\frac{1}{2}}|B||S|}{p} + k^{\frac{1}{4}}p^{\frac{1-\epsilon}{4}}|A|^{\frac{1}{2}}|B|^{\frac{1}{2}}|S|^{\frac{3}{4}}.\]
This completes the proof of Theorem \ref{thm: 4.12}.

\subsection{Two alternative approaches yield weaker bounds}
This section presents two different approaches without techniques from Fourier analysis. Although the resulting bounds are weaker compared to those of Theorem \ref{incidence} and Theorem \ref{thm: 4.12}, the methods will be useful for us when studying the case of small sets. 

\begin{theorem}\label{thm:3.7}
For any $\gamma\in (0, 1)$, there exists $\epsilon=\epsilon(\gamma)>0$ such that the following holds.
    Let $p$ be a sufficiently large prime. Let $P=A\times B \subseteq (\Fbb_p^2 \times \Fbb_p^2 ) \setminus \lo (0, 0, 0, 0)\ro$, and let $S$ be a symmetric subset of $\SL_2 (\Fbb_p )$ such that $p^\gamma < |S| < p^{3-2\gamma } $ and $ |S \cap gH| < p^{\frac{-\gamma }{2}}|S|$ for any subgroup $H \subsetneq \SL_2 (\Fbb_p) $ and $g\in \SL_2(\Fbb_p)$. Assume any line passing through the origin contains at most $k$ points from $B$. Then, 
    \begin{itemize}
        \item[(1)] if $|B|< k^{\frac{1}{2}}p$, we have 
        \[ I(P,S) \lesssim k^{\frac{1}{2}}|A|^{\frac{1}{2}} |S| + k^{\frac{1}{4}} p^{\frac{1-\epsilon}{4}} |A|^{\frac{1}{2}} |B|^{\frac{1}{2}} |S|^{\frac{3}{4}} ,\]
        \item[(2)] if $|B|\ge k^{\frac{1}{2}}p$, we have 
        \[ I(P,S) \lesssim 
        \frac{|A|^{\frac{1}{2}}|B||S|^{\frac{3}{4}}}{p^{\frac{1+ \epsilon }{4}}}.\]
    \end{itemize}
   
\end{theorem}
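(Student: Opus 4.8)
The plan is to combine the Cauchy--Schwarz reduction from Section~\ref{sectionenergy} with the multiplicative energy estimate of Corollary~\ref{co:36}, handling the degenerate configurations by hand rather than through the Fourier machinery of Theorem~\ref{incidence}. Recall that this reduction gives $I(P,S)\le \lv A\rv^{1/2}N^{1/2}$, where $N$ counts the quadruples $\lo (b,b',\theta,\theta')\in B\times B\times S\times S\colon \theta b=\theta' b'\ro$. Writing $g=\theta^{-1}\theta'$ and using that $S$ is symmetric (so $g$ ranges over $SS$ with multiplicity $r(g):=\lv\lo(\theta,\theta')\in S\times S\colon \theta^{-1}\theta'=g\ro\rv$), the constraint becomes $gb'=b$, and hence $N=\sum_g r(g)\,c(g)$ with $c(g):=\lv\lo b'\in B\colon gb'\in B\ro\rv$. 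The two identities driving everything are $\sum_g r(g)=\lv S\rv^2$ and $\sum_g r(g)^2=\Ebf(S,S)$, the latter by the change of variables $(a,b,c,d)=(\theta_1^{-1},\theta_1',\theta_2^{-1},\theta_2')$.

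Next I would split $N$ according to whether the pair $(b,b')$ is collinear with the origin. If $b$ and $b'$ lie on a common line through the origin, then $gb'=b$ forces $b'$ to be an eigenvector of $g$; for non-central $g$ these eigenvectors lie on at most two lines through the origin, each containing at most $k$ points of $B$, so the inner count is at most $2k$, and summing against $\sum_g r(g)=\lv S\rv^2$ bounds the collinear part by $\ll k\lv S\rv^2$ (the central $g=\pm I$ contribute only $\ll\lv S\rv\lv B\rv$, which will be dominated). This is the Fourier-free substitute for the main term and accounts for the summand $k^{1/2}\lv A\rv^{1/2}\lv S\rv$. For the generic part I would apply Cauchy--Schwarz in $g$ to get $\Ebf(S,S)^{1/2}\bigl(\sum_g c_{\mathrm{gen}}(g)^2\bigr)^{1/2}$. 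The sum $\sum_g c_{\mathrm{gen}}(g)^2$ counts triples $(b_1',b_2',g)$ with $gb_i'\in B$: by Lemma~\ref{lmmay2} an independent pair $(b_1',b_2')$ determines $g$ uniquely subject to the area constraint $b_1'\cdot (b_2')^\perp=(gb_1')\cdot(gb_2')^\perp$, while a dependent pair leaves $\sim p$ choices of $g$ governed by Lemma~\ref{lmmay1}. Thus this sum is controlled by the area-energy of $B$, which by Lemma~\ref{lmmay3} (and the remark following it) is $\ll \lv B\rv^4/p+pk\lv B\rv^2$. Feeding $\Ebf(S,S)<\lv S\rv^3 p^{-\epsilon}$ from Corollary~\ref{co:36} into this produces the second summand.

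Finally I would open the two regimes by balancing the terms $\lv B\rv^4/p$ and $pk\lv B\rv^2$ of the area-energy, which cross exactly at $\lv B\rv=k^{1/2}p$. When $\lv B\rv<k^{1/2}p$ the factor $pk\lv B\rv^2$ dominates; combining the collinear bound with the resulting generic bound and taking square roots through $I(P,S)\le\lv A\rv^{1/2}N^{1/2}$ gives case~(1). When $\lv B\rv\ge k^{1/2}p$ the factor $\lv B\rv^4/p$ dominates, and a short computation using $\lv S\rv<p^{3-2\gamma}$ together with $\epsilon<\gamma$ shows that both the collinear term $k^{1/2}\lv A\rv^{1/2}\lv S\rv$ and the auxiliary term $\lv A\rv^{1/2}\lv S\rv^{1/2}\lv B\rv^{1/2}$ are absorbed into $\lv A\rv^{1/2}\lv B\rv\lv S\rv^{3/4}p^{-(1+\epsilon)/4}$, yielding case~(2). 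The main obstacle throughout is the control of $N$: the genuine input is the power-saving estimate $\Ebf(S,S)<\lv S\rv^3 p^{-\epsilon}$ (Corollary~\ref{co:36}, via Bourgain--Gamburd), which is what injects the $p^{-\epsilon}$ gain and forces the structural hypotheses on $S$. The remaining delicate point is the clean separation of the degenerate (collinear/eigenvector and central) configurations from the generic ones, since it is precisely the cruder collinear bound $k\lv S\rv^2$ that stands in for the sharp Fourier main term and must be verified to be harmless in the large-$\lv B\rv$ regime.
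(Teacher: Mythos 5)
Your proposal is correct and proves the stated bounds. The skeleton coincides with the paper's: the Cauchy--Schwarz reduction $I(P,S)\le \lv A\rv^{1/2}N^{1/2}$, a second Cauchy--Schwarz pairing $\Ebf(S,S)$ (fed by Corollary~\ref{co:36}) against the skew dot-product energy of $B$ bounded by $\lv B\rv^4/p+pk\lv B\rv^2$, and the same closing case analysis at the threshold $\lv B\rv=k^{1/2}p$ using $\lv S\rv<p^{3-2\gamma}<p^{3-\epsilon}$. Where you genuinely differ is in how the degenerate configurations responsible for the term $k^{1/2}\lv A\rv^{1/2}\lv S\rv$ are isolated. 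The paper groups $B\times B$ into congruence classes under scalar multiplication, runs a dyadic pigeonhole over the class size $n(x,y)\sim\ell\le k$, and extracts an $\ell\lv S\rv^2$ contribution from matrices meeting a single class; you instead note that $gb'=b$ with $b,b'$ on a common line through the origin forces $b'$ to be an eigenvector of $g$, so a non-central $g$ admits at most $2k$ such $b'$, which summed against $\sum_g r(g)=\lv S\rv^2$ gives $2k\lv S\rv^2$ in one stroke (and the central elements $\pm I$ contribute only $O(\lv S\rv\lv B\rv)$, which you correctly check is absorbed in both regimes). Your eigenvector argument is cleaner and dispenses with the pigeonholing, though both routes rest on the same inputs (Lemmas~\ref{lmmay1}, \ref{lmmay2}, \ref{lmmay3} and Bourgain--Gamburd for the $p^{-\epsilon}$ saving). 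The one step you should write out in full is the bound $\sum_g c_{\mathrm{gen}}(g)^2\ll \lv B\rv^4/p+pk\lv B\rv^2$: the linearly dependent pairs $(b_1',b_2')$, for which $g$ is determined only up to the $p$-element fibre of Lemma~\ref{lmmay1}, contribute at most $p\cdot k\lv B\rv\cdot\lv B\rv=pk\lv B\rv^2$, which is indeed within the claimed energy bound, so the argument closes.
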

Compared to the
bound of Theorem \ref{thm: 4.12}, which is
\[ I(P,S) \ll \frac{|A|^{\frac{1}{2}}|B||S|}{p} + k^{\frac{1}{4}}p^{\frac{1-\epsilon}{4}}|A|^{\frac{1}{2}}|B|^{\frac{1}{2}}|S|^{\frac{3}{4}},\]
we can see that
\begin{itemize}
    \item if $|B|< k^{\frac{1}{2}}p $, then
    \begin{align*}
    \frac{|A|^\frac{1}{2}|B||S|}{p}< k^{\frac{1}{2}}|A|^{\frac{1}{2}}|S|,
\end{align*}
    \item if $|B|\ge k^{\frac{1}{2}}p $, then
    \begin{align*}
    \frac{|A|^\frac{1}{2}|B||S|}{p}, k^{\frac{1}{4}}p^{\frac{1-\epsilon}{4}}|A|^{\frac{1}{2}}|B|^\frac{1}{2}|S|^{\frac{3}{4}} \le \frac{|A|^{\frac{1}{2}}|B||S|^{\frac{3}{4}} }{p^{\frac{1+\epsilon}{4}}},
\end{align*}
since $|S|< p^{3-2\gamma} < p^{3-\epsilon}$. In other words, Theorem \ref{thm: 4.12} is better than Theorem \ref{thm:3.7}.
\end{itemize}

\begin{proof}[Proof of Theorem \ref{thm:3.7}]
Since the identity matrix $I$ contributes at most $$\min \lo |A|,|B|\ro \ll k^{\frac{1}{4}}p^{\frac{1-\epsilon }{4}}\lv A\rv^{\frac{1}{2}} \lv B\rv^{\frac{1}{2}} |S|^{\frac{3}{4}}$$ incidences to $I(P,S)$, we may assume without loss generality that $I \not\in S$.

    For a set $D\subset \mathbb{F}_p^2\times \mathbb{F}_p^2$ and $\theta\in SL_2(\mathbb{F}_p)$, by $i_{D}(\theta)$, we mean the number of incidences between $\theta$ and the set $D$. 

We have 
\begin{align*}
I(P, S)=\sum_{\theta\in S}i_P(\theta) \le |A|^{\frac{1}{2}}\left(I(B\times B, S')\right)^{\frac{1}{2}},
\end{align*}
where $S'$ be the multi-set of elements of the form $a^{-1}b$, where $a,b\in S$. Set $E=B\times B$. For $\theta\in S'$, let $m(\theta)$ be the multiplicity of $\theta$ in $S'$, and for $(x, y)\in E$, let $n(x, y)$ be the number of elements $(u, v)\in E$ such that $(u, v)=\lambda (x, y)$ for some $\lambda\in \mathbb{F}^*_p$. 

We now bound the number of incidences between $S'$ and $B\times B$. By $\overline{S'}$, we mean the set of distinct elements in $S$. We have 
\[I(B\times B, S')=\sum_{\theta\in \overline{S'}}m(\theta)\sum_{(x, y)\in E}\theta(x, y),\]
where $\theta(x, y)=1$ if $\theta y=x$, and $0$ otherwise. 

We now write
\begin{align*}
    &I(B\times B, S')=\sum_{\theta\in \overline{S'}}m(\theta)\sum_{(x, y)\in E}\theta(x, y)\\
    &=\sum_{\theta\in \overline{S'}}m(\theta)\sum_{(x, y)\in E, n(x, y)=1}\theta(x, y)+\sum_{\theta\in \overline{S'}}m(\theta)\sum_{(x, y)\in E, n(x, y)>1}\theta(x, y)=I+II.
\end{align*}

Let $E'$ be the set of $(x, y)\in E$ such that $n(x, y)=1$. 
To bound $I$, 

\begin{align*}
    I=\sum_{\theta\in \overline{S'}, ~i_{E'}(\theta)>1}m(\theta)\sum_{(x, y)\in E, n(x, y)=1}\theta(x, y)+\sum_{\theta\in \overline{S'}, ~i_{E'}(\theta)=1}m(\theta)\sum_{(x, y)\in E, n(x, y)=1}\theta(x, y)=I_1+I_2.
\end{align*}
It is clear that $I_2\le |S'|=|S|^2$. By the Cauchy--Schwarz inequality and Lemma \ref{lmmay3}, we have 
\begin{align*}
     I_1 & =\sum_{\theta \in\overline{S'},i_{E'}(\theta)>1}m (\theta )i_{E'}(\theta) \le  \l(\sum_{\theta \in\overline{S'},i_{E'}(\theta)>1}m (\theta )^2 \r)^\frac{1}{2} \l( \sum_{\theta \in\overline{S'},i_{E'}(\theta)>1} i_{E'}(\theta)^2 \r)^{\frac{1}{2}}   \\
     & \ll \l( \Ebf (S,S) \r)^{\frac{1}{2}} \l( \sum_{\theta \in\overline{S'},i_{E'}>1}\binom{i_{E'}(\theta)}{2} \r)^{\frac{1}{2}} \\
     & \le \l( \Ebf (S,S) \r)^{\frac{1}{2}} \l( \lv \lo  (x_1,y_1,x_2,y_2) \in E \colon (x_1,y_1)\ne (x_2,y_2), \exists \theta \in \overline{S'}, \theta \text{ is incident to } (x_1,y_1) \text{ and } (x_2,y_2) \ro \rv \r)^{\frac{1}{2}} \\
     & \le \l( \Ebf (S,S) \r)^{\frac{1}{2}} \l( \lo \l( x_1,y_1,x_2,y_2 \r) \in B^4 \colon \exists \theta \in \overline{S'} , \theta x_1 =y_1 ,\theta x_2 =y_2 \ro \r)^{\frac{1}{2}} \\
     & \le \l( \Ebf (S,S) \r)^{\frac{1}{2}} \l( \lo \l( x_1,y_1,x_2,y_2 \r) \in B^4 \colon x_1 \cdot x_2^\perp =y_1 \cdot y_2^\perp \ro \r)^{\frac{1}{2}} \\
     &\ll (\Ebf (S,S))^{\frac{1}{2}}\cdot \l( \frac{|B|^2}{p^{\frac{1}{2}}} + k^\frac{1}{2}p^\frac{1}{2}|B| \r) .
    \end{align*}
    
Thus, using Corollary \ref{co:36}, we obtain 
\[I\le |S|^2+ \l( \frac{|B|^2}{p^{\frac{1}{2}}} + k^{\frac{1}{2}}p^{\frac{1}{2}}|B|\r) |S|^{\frac{3}{2}}p^{\frac{-\epsilon}{2}}.\]
We now consider $II$. 
\begin{align*}
    II &= \sum_{\theta\in \overline{S'}}m(\theta)\sum_{(x, y)\in E, n(x, y)>1}\theta(x, y)=\sum_{i=1}^{\lf \log_2 (k) \rf} \sum_{\theta\in \overline{S'}}m(\theta)\sum_{(x, y)\in E, 2^i \le n(x, y) < 2^{i+1}}\theta(x, y).
\end{align*}
By using pigeonhole principle, there exists $\ell =2^{i_0} $ such that
\[ II \lesssim  \sum_{\theta\in \overline{S'}}m(\theta)\sum_{(x, y)\in E, \ell \le n(x, y)< 2\ell}\theta(x, y) . \]
We say two pairs $(x, y)$ and $(x', y')$ are in the same congruence class if there exists $\lambda \in \Fbb_p^\ast$ such that $(x,y) = \lambda (x',y')$. Define $E''$ to be the set of congruence classes $[(x, y)]$ in $E$ such that $\ell \le n(x, y)< 2\ell$. 

Then we have 
\[II \lesssim \ell\sum_{\theta\in \overline{S'}}m(\theta)i_{E''}(\theta)=II_1+II_2.\]
Here 
\[II_1=\ell\sum_{\theta\in \overline{S'}, ~i_{E''}(\theta)=1}m(\theta)i_{E''}(\theta),~~~II_2=\ell\sum_{\theta\in \overline{S'}, ~i_{E''}(\theta)\ge 2}m(\theta)i_{E''}(\theta).\]
As above, we have 
\[II_1\le \ell |S|^2,\]
and 
\begin{align*}
    II_2 & \le \l( (\Ebf (S,S)) \r)^{\frac{1}{2}} \ell \l( \sum_{\theta\in \overline{S'}, ~i_{E''}(\theta)\ge 2} i_{E''}(\theta)^2 \r) ^{\frac{1}{2}}  \ll \l( \Ebf (S,S) \r)^\frac{1}{2} \l( \sum_{\theta \in\overline{S'},i_{E''}(\theta )_>1} \ell^2 \binom{i_{E''}(\theta)}{2} \r)^{\frac{1}{2}} \\
    & \le \l( \Ebf (S,S) \r)^\frac{1}{2} \l( \ell^2 \lv \lo  (x_1,y_1,x_2,y_2) \in E''\times E''\colon \exists \theta \in \overline{S'} , \theta \text{ is incident to $(x_1,y_1)$ and $(x_2,y_2)$} \ro \rv \r)^{\frac{1}{2}} \\
    & \le \l( \Ebf (S,S) \r)^\frac{1}{2} \l( \lv \lo  (x_1,y_1,x_2,y_2) \in B\times B \times B \times B\colon \exists \theta \in \overline{S'} , \theta y_1=x_1 ,\theta y_2 =x_2 \ro \rv \r)^{\frac{1}{2}} \\
    & \le \l( \Ebf (S,S) \r)^\frac{1}{2} \l( \lv \lo \l( x_1,y_1,x_2,y_2\r) \in B\times B \times B \times B \colon x_1\cdot x_2^\perp =y_1 \cdot y_2^\perp \ro \rv \r)^{\frac{1}{2}} \\
    & \le (\Ebf (S,S))^{\frac{1}{2}} \cdot \l( \frac{|B|^2}{p^{\frac{1}{2}}} + k^{\frac{1}{2}}p^{\frac{1}{2}}|B| \r) .
\end{align*} 

Putting these bounds together implies 
\[II\lesssim \ell |S|^2+ (\Ebf (S,S))^{\frac{1}{2}} \l( \frac{|B|^2}{p^{\frac{1}{2}}} + k^{\frac{1}{2}}p^{\frac{1}{2}}|B| \r).\]
Notice that $\ell\le k$. So,
\[I(B\times B, S')\lesssim  k|S|^2+\l( \frac{|B|^2}{p^{\frac{1}{2}}} + k^{\frac{1}{2}}p^{\frac{1}{2}}|B| \r) |S|^{\frac{3}{2}}p^{\frac{-\epsilon}{2}},\]
and then
\[ I(P,S) \lesssim k^{\frac{1}{2}}|A|^{\frac{1}{2}} |S| + k^{\frac{1}{4}} p^{\frac{1-\epsilon}{4}} |A|^{\frac{1}{2}} |B|^{\frac{1}{2}} |S|^{\frac{3}{4}} + p^{\frac{-1-\epsilon}{4}}|A|^{\frac{1}{2}}|B||S|^{\frac{3}{4}} .\]

A direct computation implies that
\begin{enumerate}
    \item if $|B|< k^{\frac{1}{2}}p$, then
    \[ I(P,S) \lesssim k^{\frac{1}{2}}|A|^{\frac{1}{2}} |S| + k^{\frac{1}{4}} p^{\frac{1-\epsilon}{4}} |A|^{\frac{1}{2}} |B|^{\frac{1}{2}} |S|^{\frac{3}{4}} ;\]
    \item if $|B| \ge k^{\frac{1}{2}}p$, then
    \[ I(P,S) \lesssim  p^{\frac{-1-\epsilon}{4}}|A|^{\frac{1}{2}}|B||S|^{\frac{3}{4}}.\]
\end{enumerate}
This completes the proof.
\end{proof}

\begin{theorem}\label{thm: 3.8}
    Let $p$ be a prime and $P=A\times B\subseteq (\Fbb_p^2 \times \Fbb_p^2) \setminus \lo (0, 0, 0, 0)\ro $. Assume any line passing through the origin contains at most $k$ points from $B$. Then, we have
    \[ I(P,S) \lesssim   \frac{|A||B|\lv S\rv^{\frac{1}{2}}}{p^{\frac{1}{2}}} + k^{\frac{1}{2}}p^{\frac{1}{2}}|A|^{\frac{1}{2}}|B|^{\frac{1}{2}}\lv S\rv^{\frac{1}{2}} + k\lv S\rv .\]
    
\end{theorem}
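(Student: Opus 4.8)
The plan is to avoid Fourier analysis entirely and bound $I(P,S)$ by a single application of Cauchy--Schwarz in the $\theta$ variable, reducing the problem to the dot-product energy already controlled by Lemma \ref{lmmay3}. Writing $i_P(\theta)$ for the number of pairs $(x,y)\in A\times B$ with $\theta y=x$, Cauchy--Schwarz gives
\[ I(P,S)=\sum_{\theta\in S} i_P(\theta)\le |S|^{\frac{1}{2}}\Big(\sum_{\theta\in S} i_P(\theta)^2\Big)^{\frac{1}{2}}\le |S|^{\frac{1}{2}}\Big(\sum_{\theta\in \SL_2(\Fbb_p)} i_P(\theta)^2\Big)^{\frac{1}{2}}, \]
so it suffices to bound the symmetric count $N:=\sum_{\theta\in \SL_2(\Fbb_p)} i_P(\theta)^2$, which is the number of quintuples $(\theta,x_1,y_1,x_2,y_2)$ with $x_i\in A$, $y_i\in B$ and $\theta y_i=x_i$ for $i=1,2$. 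As in the proof of Theorem \ref{incidence}, the origin contributes negligibly (an incidence forces $x,y\ne(0,0)$ since $\theta$ is invertible), so I may assume $(0,0)\notin A\cup B$ throughout.

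The key step is a dichotomy on the pair $(y_1,y_2)$, mirroring the two regimes in Lemma \ref{lmmay2}. I would split $N=N_{\mathrm{ind}}+N_{\mathrm{dep}}$ according to whether $y_1,y_2$ are linearly independent or collinear through the origin. In the independent regime, Lemma \ref{lmmay2} shows that $\theta$ is uniquely determined and exists precisely when $x_1\cdot x_2^\perp=y_1\cdot y_2^\perp$; hence $N_{\mathrm{ind}}$ is at most the number of quadruples $(x_1,x_2,y_1,y_2)\in A^2\times B^2$ with $x_1\cdot x_2^\perp=y_1\cdot y_2^\perp$, which the $k$-refined form of Lemma \ref{lmmay3} (valid since $k_B\le k$) bounds by $\frac{|A|^2|B|^2}{p}+Cpk|A||B|$. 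In the collinear regime, writing $y_1=\lambda y_2$ forces $x_1=\lambda x_2$, and for each fixed $(y_2,x_2)$ the stabilizer computation of Lemma \ref{lmmay1} shows that the admissible $\theta$ form a coset of size $p$; summing $p\,|A\cap\lambda^{-1}A|\le p|A|$ over the at most $k|B|$ collinear pairs $(y_1,y_2)\in B^2$ (this is exactly where the hypothesis that every line through the origin meets $B$ in at most $k$ points is used) gives $N_{\mathrm{dep}}\ll pk|A||B|$.

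Combining the two regimes yields $N\ll \frac{|A|^2|B|^2}{p}+pk|A||B|$, and substituting into the Cauchy--Schwarz bound produces
\[ I(P,S)\ll \frac{|A||B||S|^{\frac{1}{2}}}{p^{\frac{1}{2}}}+k^{\frac{1}{2}}p^{\frac{1}{2}}|A|^{\frac{1}{2}}|B|^{\frac{1}{2}}|S|^{\frac{1}{2}}, \]
which already dominates the asserted estimate, so that in this route the stated third term $k|S|$ and the logarithmic loss in $\lesssim$ are not needed. I expect the main obstacle to be the degenerate regime rather than the generic one: it is precisely where $\theta$ ceases to be unique, so the stabilizer factor $p$ from Lemma \ref{lmmay1} enters and must be compensated by the collinearity hypothesis on $B$ to keep $N_{\mathrm{dep}}$ at the level of the secondary term $pk|A||B|$; getting this balance right (rather than picking up an extra power of $p$) is the delicate point. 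A more pedestrian alternative, closer to the point--line/point--plane philosophy flagged for this section and reusable for small sets, would instead dyadically decompose $B$ by its congruence classes (directions) and estimate the resulting incidences directly; this route reproduces the three-term bound, with the extra $k|S|$ arising from the low-multiplicity part and the $\lesssim$ from the dyadic pigeonholing.
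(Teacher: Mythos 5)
Your argument is correct, and it takes a genuinely different route from the paper. The paper removes the identity matrix, splits $P$ according to the proportionality multiplicity $n(x,y)$, handles the $n=1$ part by Cauchy--Schwarz over $\theta\in S$ together with the energy bound of Lemma \ref{lmmay3}, and handles the $n>1$ part by a dyadic pigeonholing over the multiplicity level $\ell\le k$; it is precisely this last step that produces the third term $k|S|$ and the logarithmic loss hidden in $\lesssim$. You instead apply Cauchy--Schwarz once, enlarge the $\theta$-sum from $S$ to all of $\SL_2(\Fbb_p)$, and then count the quintuples exactly by splitting on whether $y_1,y_2$ are linearly independent: the independent part is controlled by the uniqueness statement of Lemma \ref{lmmay2} plus the $k$-refined form of Lemma \ref{lmmay3} (valid since $k_B\le k$ implies $\min\{k_A,k_B\}\le k$), and the dependent part is counted exactly via the stabilizer size $p$ from Lemma \ref{lmmay1}, giving $N_{\mathrm{dep}}\le pk|A||B|$ with no diagonal loss. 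The outcome is a two-term bound with no $k|S|$ and no log, i.e.\ strictly stronger than the stated estimate, so your proof is complete as written. The trade-off is that enlarging the sum to all of $\SL_2(\Fbb_p)$ discards any information about $S$ beyond its cardinality; the paper's more cumbersome decomposition, which keeps the $\theta$-sum inside $S$ (or $S^{-1}S$), is the version that generalizes to Theorems \ref{thm:3.7} and \ref{incidence-energy'}, where the multiplicative energy $\Ebf(S,S)$ must enter. For Theorem \ref{thm: 3.8} itself, where no such energy is used, your shortcut is harmless and cleaner.
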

Compared to the bound of Theorem \ref{incidence}, which is
\[ I(P,S) \ll \frac{|A||B||S|}{p^2} + k^{\frac{1}{2}}p^{\frac{1}{2}}|A|^{\frac{1}{2}}|B|^{\frac{1}{2}}|S|^{\frac{1}{2}},\]
we can see that
\[ \frac{|A||B||S|}{p^2} \le \frac{|A||B||S|^{\frac{1}{2}}}{p^{\frac{1}{2}}},\]
under $|S|\le p^3-p$.
Therefore, Theorem \ref{incidence} is better than Theorem \ref{thm: 3.8}.
\begin{proof}[Proof of Theorem \ref{thm: 3.8}]
     Since the identity matrix $I$ contributes at most $\min\{|A|, |B|\}$ incidences to $I(P,S)$, we may assume without loss generality that $I \not\in S$.
    For $\theta\in S$ and set $D \subset \Fbb_p^2 \times \Fbb_p^2$, by $i_D(\theta)$ we mean the number of pairs $(x, y)\in D$ such that $\theta y=x$. For $(x,y)\in P$, let $n(x,y)$ be the number of elements $(u,v) \in P$ such that $(u,v)=\lambda (x,y)$ for some $\lambda \in \Fbb_p^\ast$. Then  
\begin{align*}
    I(P, S)\le \sum_{\theta\in S}i_P(\theta) & = \sum_{\theta \in S}\sum_{(x,y) \in P , n(x,y)=1} \theta (x,y)+ \sum_{\theta \in S}\sum_{(x,y) \in P , n(x,y)>1} \theta (x,y)\\ 
    & =: I_1 +I_2
\end{align*} 
where $\theta (x,y)=1$ if $\theta y=x$, and $0$ ortherwise. Let $P'$ be the set of $(x, y)\in P$ such that $n(x, y)=1$. By the Cauchy--Schwarz inequality and Lemma \ref{lmmay3}
\begin{align*}
    I_1&=\sum_{\theta \in S } i_{P'}
    (\theta) \le |S|^{\frac{1}{2}}\left(\sum_{\theta \in S}i_{P'}(\theta)^2\right)^{\frac{1}{2}} \ll \lv S\rv^{\frac{1}{2}} \l( \sum_{\theta \in S} \binom{i_{P'}(\theta)}{2} +\lv S\rv \r)^{\frac{1}{2}} \\
    & \le \lv S\rv^{\frac{1}{2}} \l( \lv \lo \l( x_1,y_1,x_2,y_2 \r) \in A\times B\times A \times B \colon \exists \theta \in S , \theta y_1 =x_1 ,\theta y_2 =x_2 \ro \rv +\lv S\rv \r)^{\frac{1}{2}}\\
    & \le \lv S\rv^{\frac{1}{2}} \l( \lv \lo \l( x_1,y_1,x_2,y_2 \r) \in A\times B\times A \times B \colon x_1\cdot x_2^\perp =y_1\cdot y_2^\perp \ro \rv +\lv S\rv \r)^{\frac{1}{2}}\\
    & \ll \lv S\rv^{\frac{1}{2}} \l(  \frac{|A||B|}{p^{\frac{1}{2}}} + (pk|A||B|)^{\frac{1}{2}} + \lv S\rv^{\frac{1}{2}} \r) .
\end{align*}
We now consider $I_2$,
\begin{align*}
    I_2 &= \sum_{\theta \in S}\sum_{(x,y) \in P , n(x,y)>1} \theta (x,y)=\sum_{i=1}^{\lf \log_2 (k) \rf} \sum_{\theta\in S}\sum_{(x, y)\in P, 2^i \le n(x, y) < 2^{i+1}}\theta(x, y) .
\end{align*}
By using pigeonhole principle, there exists $\ell =2^{i_0} $ such that
\[ I_2 \lesssim \sum_{\theta\in S}\sum_{(x, y)\in P, \ell \le n(x, y)< 2\ell}\theta(x, y) . \]
Define $P''$ to be the set of representatives $(x, y)$ in $P$ such that $\ell \le n(x, y)< 2\ell$. Then, by the Cauchy--Schwarz inequality, Lemma \ref{lmmay3} and note that $k \ge \ell$ we have 
\begin{align*}
    I_2 & \lesssim \ell\sum_{\theta\in S}i_{P''}(\theta) \le \ell |S|^{\frac{1}{2}}\left(\sum_{\theta \in S}i_{P''}(\theta)^2\right)^{\frac{1}{2}}  \ll \lv S\rv^{\frac{1}{2}} \l( \sum_{\theta \in S}\ell^2 \binom{i_{P''}(\theta)}{2} + \ell^2\lv S\rv \r)^{\frac{1}{2}} \\
    & \le \lv S\rv^{\frac{1}{2}}  \l( \ell^2 \lv \lo  (x,y) \in (P'')^2\colon \exists \theta \in S , \theta \text{ is incident to $x$ and $y$} \ro \rv +\ell^2 \lv S\rv \r)^{\frac{1}{2}} \\
    & \le \lv S\rv^{\frac{1}{2}} \l( \lv \lo \l( x_1,y_1,x_2,y_2 \r) \in A\times B\times A \times B \colon \exists \theta \in S , \theta y_1 =x_1 ,\theta y_2 =x_2 \ro \rv +\ell^2 \lv S\rv \r)^{\frac{1}{2}}\\
    & \le \lv S\rv^{\frac{1}{2}} \l( \lv \lo \l( x_1,y_1,x_2,y_2 \r) \in A\times B\times A \times B \colon x_1\cdot x_2^\perp =y_1\cdot y_2^\perp \ro \rv +\ell^2 \lv S\rv \r)^{\frac{1}{2}}\\
    & \ll \lv S\rv^{\frac{1}{2}} \l(  \frac{|A||B|}{p^{\frac{1}{2}}} + (pk|A||B|)^{\frac{1}{2}} + k\lv S\rv^{\frac{1}{2}} \r) ,
\end{align*} 
Putting these bounds together implies
\begin{align*}
    I(P,S) \lesssim \lv S\rv^{\frac{1}{2}} \l(  \frac{|A||B|}{p^{\frac{1}{2}}} + (pk|A||B|)^{\frac{1}{2}} + k\lv S\rv^{\frac{1}{2}} \r).
\end{align*}
as desired.
\end{proof}

\subsection{Incidence bounds for small sets (Theorem \ref{incidence-energy'})}

\subsubsection*{A skew dot-product energy estimate}
Given $B\subset \mathbb{F}_p^2$, this section is devoted to study the magnitude of the set 
\begin{equation}\label{ene-B}\{(x, y, u, v)\in B^4\colon x\cdot y^\perp=u\cdot v^\perp\}\end{equation}
when the size of $B$ is small. 

When the set $B$ is of large size, Lemma \ref{lmmay3} can be used to show that the number of such quadruples is almost the expected value $|B|^4/p$. However, when the size of $B$ is small, say, $|B|<p$, we have to deal with degenerate structures. 

More precisely, let $\ell$ be a line passing through the origin and $B$ be a subset of $\ell\setminus \{(0, 0)\}$, then the number of tuples $(x,y,u,v) \in B^4 $ such that $x\cdot y^\perp =u\cdot v^\perp $ is $\lv B\rv^4$. Note that if we remove the $'\perp'$ sign, then it will be at most $|B|^3$. 

This example shows that we need to have some conditions on the structures of $B$ so that a non-trivial estimate can be obtained.

If $B$ has a few distinct directions through the origin, then, by following  Rudnev's argument in \cite[Section 3]{R.14} identically, one obtains
\begin{lemma}\label{lem: 6.12}
Let $B$ be a subset in $\mathbb{F}_p^2$ with $|B|\le p$. Assume any line passing through the origin contains at most $k_1$ points from $B$ and $B$ determines at most $k_2$ distinct directions through the origin. The number of quadruples $(x, y, u, v)\in B^4$ such that $x\cdot y^\perp=u\cdot v^\perp$ is at most $k_1^{\frac{1}{2}}|B|^3+k_1k_2|B|^2+k_1^2|B|^2$.
\end{lemma}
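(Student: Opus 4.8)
The plan is to read the count as an additive energy of the skew form and to control it through Rudnev's point--plane incidence theorem, exactly in the spirit of \cite[Section 3]{R.14}. Writing $r(t) = \lv \lo (x,y)\in B^2 \colon x\cdot y^\perp = t \ro \rv$, the quantity to be bounded is precisely $\sum_{t\in \Fbb_p} r(t)^2$. I would first isolate the degenerate value $t=0$. Since $x\cdot y^\perp=0$ exactly when $x$ and $y$ lie on a common line through the origin, we have $r(0)=\sum_{\ell \ni 0} \lv B\cap \ell\rv^2 \le k_1 \sum_{\ell\ni 0}\lv B\cap \ell\rv = k_1\lv B\rv$, so this value contributes $r(0)^2 \le k_1^2\lv B\rv^2$, which is already the last term of the claimed bound.

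For the surviving part $\sum_{t\ne 0} r(t)^2 = \lv \lo (x,y,u,v)\in B^4 \colon x\cdot y^\perp = u\cdot v^\perp \ne 0\ro \rv$, I would encode the defining relation $x_1y_2-x_2y_1 = u_1v_2-u_2v_1$ as a point--plane incidence in $\Pbb^3(\Fbb_p)$: to each pair $(x,u)\in B^2$ attach the point $[x_1:x_2:u_1:u_2]$, and to each pair $(y,v)\in B^2$ the plane $\lo [z_1:z_2:z_3:z_4]\colon z_1y_2-z_2y_1-z_3v_2+z_4v_1=0\ro$. The relation is bilinear and invariant under the simultaneous scaling $(x,u)\mapsto (cx,cu)$, so this is a genuine projective incidence whose count dominates $\sum_{t\ne 0}r(t)^2$, the only bookkeeping being that each projective class represents at most $k_1$ pairs of $B^2$ (the scalars $c$ with $cx\in B$ number at most $k_1$). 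Applying Rudnev's theorem \cite{R.14} to these two families of size $\sim \lv B\rv^2$ in $\Pbb^3(\Fbb_p)$, and using $\lv B\rv\le p$ to absorb the main term $\tfrac{\lv \mathcal P\rv\lv \Pi\rv}{p}\le \lv B\rv^3$, the incidence bound takes the shape $\lv B\rv^3 + k\,\lv B\rv^2$, where $k$ is the largest number of points of the configuration on a single line of $\Pbb^3(\Fbb_p)$.

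The crux, and the step I expect to be the main obstacle, is bounding this collinearity parameter $k$ in terms of $k_1$ and $k_2$; this is exactly where the structural hypotheses on $B$ enter and where Rudnev's degenerate-case analysis must be reproduced. A line of $\Pbb^3$ corresponds to a $2$-dimensional subspace $V\subseteq \Fbb_p^2\oplus \Fbb_p^2$, and I would case on the ranks of its two coordinate projections. When both projections are injective, $V$ is the graph of a linear map $u=Tx$ and the points on the line number at most $\lv \lo x\in B\colon Tx\in B\ro \rv\le \lv B\rv$, which together with Rudnev's main term contributes $O(\lv B\rv^3)\le k_1^{1/2}\lv B\rv^3$. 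When a projection degenerates, one variable is forced onto a line through the origin (at most $k_1$ choices in $B$) while the other ranges over an affine line meeting $B$ in at most $\max\{k_1,k_2\}$ points, since a line through the origin carries $\le k_1$ points of $B$ and a line off the origin meets each of the $\le k_2$ directions at most once; hence $k\ll k_1k_2 + k_1^2$. Combining, $k\,\lv B\rv^2 \ll k_1^{1/2}\lv B\rv^3 + k_1k_2\lv B\rv^2 + k_1^2\lv B\rv^2$, and adding back the $t=0$ contribution yields the asserted bound. The remaining care is precisely the multiplicity accounting for the projective encoding and the verification that no degenerate subspace produces richer lines than the above cases allow; once these are in place, Lemma~\ref{lmmay3} covers the complementary large-$\lv B\rv$ regime and the argument of \cite{R.14} transfers essentially verbatim.
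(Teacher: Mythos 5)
The paper never writes out a proof of this lemma: it is stated with the single remark that it follows ``by following Rudnev's argument in \cite[Section 3]{R.14} identically''. Your reconstruction has the right skeleton for that argument — splitting off the value $t=0$ (your bound $r(0)\le k_1|B|$, hence $r(0)^2\le k_1^2|B|^2$, is correct), projectivizing the bilinear relation $x_1y_2-x_2y_1-u_1v_2+u_2v_1=0$ to a point--plane incidence problem in $\mathbb{P}^3(\mathbb{F}_p)$, and controlling the collinearity parameter by a rank case analysis; your estimate $k_1k_2+k_1^2$ for the degenerate two-dimensional subspaces is exactly the right computation, as is the observation that $|B|\le p$ absorbs the term $|\mathcal P||\Pi|/p$.

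There is, however, a genuine gap at the step you set aside as ``bookkeeping'', and it is visible in your own accounting: you assert that the main term of Rudnev's bound is $O(|B|^3)$ and treat the factor $k_1^{1/2}$ as slack. If that were correct, the lemma would hold with $|B|^3$ in place of $k_1^{1/2}|B|^3$, a strictly stronger statement than the one being proved. The issue is that both the projective points $[x:u]$ and the projective planes $[y:v]$ carry multiplicities up to $k_1$, and the quantity to be bounded is the weighted incidence count $\sum_{P\in\pi}m(P)w(\pi)$, not the incidence count of the underlying sets. Bounding every multiplicity crudely by $k_1$ and applying the unweighted theorem yields a main term of order $k_1^2|B|^3$, which is too weak; the correct treatment (a weighted point--plane theorem, or a dyadic decomposition in the multiplicities) replaces $|\mathcal P|^{1/2}|\Pi|$ by roughly $\left(\sum_P m(P)^2\right)^{1/2}\sum_\pi w(\pi)\le (k_1|B|^2)^{1/2}|B|^2=k_1^{1/2}|B|^3$. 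So the exponent $\tfrac12$ on $k_1$ in the leading term is not slack: it is precisely the price of the multiplicities, and earning it is the substantive part of the argument you have deferred. The same weighting must be carried through the collinearity term, where the relevant quantity is $\max_V|(B\times B)\cap V|$ over two-dimensional subspaces $V$ of $\mathbb{F}_p^4$; your case analysis does bound this correctly by $\max\{|B|,\,k_1k_2+k_1^2\}$, so once the weighted incidence step is supplied the rest of your outline goes through.
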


If we only assume an assumption on the number of points on a line passing through the origin, then we have the following lemma.

\begin{lemma}\label{lem: 6.2}
Let $B$ be a subset in $\mathbb{F}_p^2$ with $|B|\le p^{\frac{8}{15}}$. Assume any line passing through the origin contains at most $k$ points from $B$. The number of quadruples $(x, y, u, v)\in B^4$ such that $x\cdot y^\perp=u\cdot v^\perp$ is at most $k^{\frac{4}{15}}|B|^{\frac{748}{225}}+ k^2 \lv B\rv^2$.
\end{lemma}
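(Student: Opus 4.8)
The plan is to write the quantity to be estimated as $Q=\sum_{t\in\mathbb F_p}r(t)^2$, where $r(t):=\#\{(x,y)\in B^2\colon x\cdot y^\perp=t\}$, and to treat the degenerate value $t=0$ separately from the generic values $t\neq 0$. For the degenerate part, note that $x\cdot y^\perp=0$ forces $x$ and $y$ to lie on a common line through the origin (we may assume $0\notin B$, the origin contributing negligibly), so that $r(0)=\sum_{\ell\ni 0}|B\cap\ell|^2\le k\sum_{\ell\ni 0}|B\cap\ell|=k|B|$ by the hypothesis that each such line carries at most $k$ points of $B$. Hence the contribution of $t=0$ to $Q$ is $r(0)^2\le k^2|B|^2$, which already produces the second term in the claimed bound. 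It remains to control $Q_{\neq 0}:=\sum_{t\neq 0}r(t)^2$, and here the small size of $B$ is exactly what prevents us from simply invoking Lemma \ref{lmmay3}.

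For $t\neq 0$ I would realise $r(t)$ as a point–line incidence count: fixing $t$, each $x\in B$ determines the line $\ell_{x,t}=\{z\colon x\cdot z^\perp=t\}$ (none through the origin, and pairwise distinct for distinct $x$ when $t\neq 0$), so that $r(t)=I\!\left(B,\{\ell_{x,t}\colon x\in B\}\right)$ involves $|B|$ points and $|B|$ lines. To pass from a single $t$ to the sum $\sum_{t\neq 0}r(t)^2$ I would dyadically pigeonhole the values by popularity, writing $Q_{\neq 0}\lesssim\sum_{j}\tau_j^2\,N_j$ with $N_j=\#\{t\neq 0\colon r(t)\sim\tau_j\}$, and then bound $N_{\ge\tau}=\#\{t\colon r(t)\ge\tau\}$ by applying the point–line incidence estimate of Stevens and de Zeeuw to the union $\bigcup_{r(t)\ge\tau}\{\ell_{x,t}\}$ of the corresponding $\lesssim|B|\,N_{\ge\tau}$ lines against the $|B|$ points of $B$. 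The hypothesis $|B|\le p^{8/15}$ is what places every point/line configuration arising in this step inside the range where the Stevens--de Zeeuw main term $(\#\mathrm{points}\cdot\#\mathrm{lines})^{11/15}$ dominates the $p$-dependent error term, so that the bound is applicable in its strong form.

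Feeding the resulting bound on $N_{\ge\tau}$ back into $Q_{\neq 0}\lesssim\sum_j\tau_j^2 N_j$ and optimising over the dyadic threshold yields the first term $k^{4/15}|B|^{748/225}$; the factor $k^{4/15}$ and the precise exponent $748/225$ (whose denominator is $15^2$) emerge from this optimisation together with the bookkeeping of the rich lines, i.e.\ the lines carrying an unusually large number of points of $B$, which is where the parameter $k$ must be fed in. I expect the main obstacle to be precisely this control of rich and degenerate configurations: one must separate the incidences coming from lines through the origin (handled by the crude $k|B|$ bound above) from those coming from the generic lines $\ell_{x,t}$, and verify that the Stevens--de Zeeuw hypotheses (cardinalities versus $p$, and the absence of an overly rich line) hold uniformly across all dyadic scales. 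The remaining steps—checking distinctness of the $\ell_{x,t}$, summing the dyadic contributions, and confirming that the exponents balance to $748/225$—are routine once the incidence input is in place.
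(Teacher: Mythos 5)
Your split into the degenerate value $t=0$ (contributing $r(0)^2\le k^2|B|^2$) and $Q_{\neq0}=\sum_{t\neq0}r(t)^2$ matches the paper's decomposition into $II$ and $I$, but from there the routes diverge: the paper encodes all of $Q_{\neq0}$ as one incidence count between $B$ and the multiset of $|B|^3$ lines $\{x\cdot a^\perp=u\cdot v^\perp\}$ indexed by $(a,u,v)\in B^3$, bounds $\sum_\ell m(\ell)^2\ll k|B|^{22/15}\cdot|B|^3$, and applies the weighted Stevens--de Zeeuw bound (Theorem \ref{eqn:IPLMS}), whereas you run a level-set argument on $r(t)$. Your route is viable but has a concrete gap in exactly the step you flag as ``bookkeeping of the rich lines''. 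Across different values of $t$ one has $\ell_{x,t}=\ell_{\lambda x,\lambda t}$, so in $\bigcup_{r(t)\ge\tau}\{\ell_{x,t}\}$ each geometric line occurs with multiplicity up to $k$. If you discharge this crudely, bounding the multiset incidence count by $k$ times the incidence count with the at most $|B|N_{\ge\tau}$ distinct lines, you get $\tau N_{\ge\tau}\ll k(|B|^2N_{\ge\tau})^{11/15}+\cdots$, hence $N_{\ge\tau}\ll k^{15/4}|B|^{22/4}\tau^{-15/4}$, and the optimisation returns $k^{15/11}|B|^{36/11}$ --- which \emph{exceeds} $k^{4/15}|B|^{748/225}+k^2|B|^2$ already for $k=|B|^{1/10}$, so the lemma does not follow. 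To get the right $k$-dependence the multiplicities must enter through $\sum_\ell m(\ell)^2\le k|B|N_{\ge\tau}$ rather than through $\max_\ell m(\ell)$, i.e.\ you need precisely the weighted incidence bound of Theorem \ref{eqn:IPLMS}, which gives $\tau N_{\ge\tau}\lesssim k^{4/15}|B|^{22/15}N_{\ge\tau}^{11/15}+|B|+|B|N_{\ge\tau}$ and hence $N_{\ge\tau}\lesssim k|B|^{22/4}\tau^{-15/4}$ for $\tau\gg|B|$.

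Second, the exponents $4/15$ and $748/225$ do not ``emerge from the optimisation'': combining $N_{\ge\tau}\lesssim k|B|^{22/4}\tau^{-15/4}$ with $N_{\ge\tau}\le|B|^2/\tau$ and summing dyadically gives $Q_{\neq0}\lesssim k^{4/11}|B|^{36/11}$, with threshold $\tau_*=k^{4/11}|B|^{14/11}$. This is a genuinely different bound, and you must still verify that it implies the claimed one: $k^{4/11}|B|^{36/11}\le k^{4/15}|B|^{748/225}$ holds exactly when $k\le|B|^{8/15}$, while for $k>|B|^{8/15}$ the threshold $\tau_*$ exceeds the a priori cap $\max_{t\neq0}r(t)\ll|B|^{22/15}$ (Theorem \ref{frank} applied to the $|B|$ distinct lines $\ell_{x,t}$ for a single $t$), and one instead uses $Q_{\neq0}\le\max_t r(t)\cdot\sum_t r(t)\ll|B|^{52/15}\le k^{4/15}|B|^{748/225}$. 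With these two repairs your argument is correct, and in the regime $k\ll|B|^{8/15}$ it in fact gives a bound stronger than the paper's.
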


\subsubsection*{Proof of Lemma \ref{lem: 6.2}}
The key ingredient in the proof of Lemma \ref{lem: 6.2} is the multi-set version of a point-line incidence bound due to Stevens and De Zeeuw in \cite{SdZ}.
\begin{theorem}[Stevens--de~Zeeuw, \cite{SdZ}]\label{frank}
Let $P$ be a point set and $L$ a set of lines in $\mathbb{F}_p^2$. If $|P|\ll p^{\frac{8}{5}}$, then the number of incidences between $P$ and $L$, denoted by $I(P, L)$, satisfies 
\[I(P, L)\ll |P|^{\frac{11}{15}}|L|^{\frac{11}{15}}+|P|+|L|.\]
\end{theorem}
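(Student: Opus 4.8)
Since Theorem~\ref{frank} is precisely the Stevens--de~Zeeuw point--line incidence bound over $\mathbb{F}_p$, my plan is to reproduce their argument, whose engine is Rudnev's point--plane incidence estimate in three dimensions (the same circle of ideas as \cite{R.14}): for a set of points $\mathcal{P}$ and a set of planes $\Pi$ in $\mathbb{F}_p^3$ with $|\mathcal{P}|\le |\Pi|$ and $|\mathcal{P}|\ll p^2$, one has $I(\mathcal{P},\Pi)\ll |\mathcal{P}|^{1/2}|\Pi|+\kappa|\Pi|$, where $\kappa$ is the largest number of points of $\mathcal{P}$ on a single line. First I would dispose of vertical lines: each meets $P$ in a controlled number of points and there are few of them, so they only feed the lower-order $|P|+|L|$ terms. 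The remaining lines I write as $y=sx+t$, identifying $L$ with a subset of the $(s,t)$-plane, so that an incidence of $(a,b)\in P$ with $\ell$ becomes the relation $b=sa+t$.

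The core is to estimate the line energy $\sum_{\ell\in L} r(\ell)^2$, where $r(\ell)=|\ell\cap P|$, because Cauchy--Schwarz gives $I(P,L)^2\le |L|\sum_{\ell}r(\ell)^2$. Expanding the square, $\sum_\ell r(\ell)^2$ counts configurations of two points of $P$ on a common line of $L$, i.e. simultaneous relations $b_1=sa_1+t$ and $b_2=sa_2+t$; eliminating $t$ leaves the bilinear relation $b_1-b_2=s(a_1-a_2)$ together with membership $(s,t)\in L$. The technical heart of Stevens--de~Zeeuw is to clear the denominator in $s=(b_1-b_2)/(a_1-a_2)$ and recast this as an honest \emph{point--plane} incidence in $\mathbb{F}_p^3$: an auxiliary point set assembled from the coordinates $(a_i,b_i)$, and a family of planes indexed by the lines $(s,t)\in L$, so that ``lies on a line of $L$'' becomes a single linear condition. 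Here the product structure of $P$ (or, for a general point set, a pigeonholing/projection step) is what makes the substitution work. Applying Rudnev's bound, the main term $|\mathcal{P}|^{1/2}|\Pi|$ yields the nontrivial saving, while the degenerate term $\kappa|\Pi|$, which records points of the auxiliary set that are abnormally collinear (coming from rich lines of $L$ or from many points of $P$ aligned in a fixed direction), produces the $|P|+|L|$ pieces; this is also the place where a bound on the number of points of $B$ on a line through the origin later sharpens matters in Lemma~\ref{lem: 6.2}.

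With the energy controlled, I would run a dyadic decomposition of $L$ according to richness $r(\ell)\sim\Delta$, apply the Rudnev-based energy bound on each scale, and optimize over $\Delta$, balancing the main term against the trivial per-scale estimate; this optimization is exactly what produces the symmetric exponent $\tfrac{11}{15}$ on both $|P|$ and $|L|$ (in the balanced regime $|P|^{7/8}\lesssim|L|\lesssim|P|^{8/7}$, with the $|P|+|L|$ terms taking over outside it). The hypothesis $|P|\ll p^{8/5}$ is dictated by Rudnev's requirement $|\mathcal{P}|\ll p^2$: the auxiliary three-dimensional point set has size polynomially tied to $|P|$ --- of order $|P|^{5/4}$ in the relevant range --- and $|P|^{5/4}\ll p^2$ is precisely $|P|\ll p^{8/5}$.

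The main obstacle is engineering the reduction of the second paragraph so that simultaneously (a) the auxiliary point set in $\mathbb{F}_p^3$ is a genuine set of bounded multiplicity rather than a heavily weighted multiset, and (b) the collinear-points parameter $\kappa$ entering Rudnev's error stays small, since it is exactly a failure to control $\kappa$ that would collapse the estimate back to the trivial $|P|^{1/2}|L|+|P|$ and destroy the improvement. Getting the denominator-clearing substitution right, so that both the bilinear relation and the plane family are honestly linear in the chosen point coordinates, and then carrying the exponent bookkeeping cleanly through the dyadic optimization, are the two delicate points; beyond these the proof is Cauchy--Schwarz together with a black-box application of Rudnev's theorem.
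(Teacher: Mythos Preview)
The paper does not give a proof of Theorem~\ref{frank}: it is quoted verbatim from \cite{SdZ} and used as a black box (to derive the weighted multiset version, Theorem~\ref{eqn:IPLMS}, and then Lemma~\ref{lem: 6.2}). So there is nothing in the paper to compare your argument against; the ``paper's own proof'' here is simply the citation.

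That said, your sketch is a faithful outline of the Stevens--de~Zeeuw argument: the reduction of point--line incidences to a point--plane problem in $\mathbb{F}_p^3$ via the parametrisation $y=sx+t$, the appeal to Rudnev's theorem, the control of the collinearity parameter $\kappa$, and the dyadic optimisation over richness are all correct in spirit, and your arithmetic check that $|P|^{5/4}\ll p^2$ matches the hypothesis $|P|\ll p^{8/5}$ is right. One point to be careful about if you actually write it out: in \cite{SdZ} the clean lift to $\mathbb{F}_p^3$ is carried out first for Cartesian products $P=A\times B$ (where the auxiliary three-dimensional point set is genuinely a set), and the general case is then obtained by a pigeonhole/covering step that selects a popular abscissa set; your paragraph on ``engineering the reduction'' correctly flags this as the delicate place, but the resolution is this product-set-first route rather than a direct attack on arbitrary $P$.
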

\begin{theorem}\label{eqn:IPLMS}
    Let $P$ be a multi-set of points and $L$ be a multi-set of lines in $\mathbb{F}_p^2$. We denote the set of distinct points in $P$ by $\overline{P}$ and the set of distinct lines in $L$ by $\overline{L}$. For $p\in \overline{P}$ and $\ell\in \overline{L}$, let $m(p)$ and $m(\ell)$ be the multiplicity of $p$ and $\ell$, respectively. If 
    $|P|=\sum_{p\in \overline{P}}m(p)\ll p^{\frac{8}{5}}$, then
\begin{equation}\label{eqan:IPLMS}
    I(P, L)\lesssim |P|^{\frac{7}{15}}|L|^{\frac{7}{15}}\left(\sum_{p\in \overline{P}}m(p)^2\right)^{\frac{4}{15}}\left(\sum_{\ell\in \overline{L}}m(\ell)^2\right)^{\frac{4}{15}}+|P|+|L|.
\end{equation}
\end{theorem}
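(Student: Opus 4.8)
The plan is to reduce to the set-level Stevens--de~Zeeuw estimate (Theorem \ref{frank}) by dyadically decomposing the point- and line-multiplicities, and then to reassemble the pieces through a single Hölder inequality whose exponents are dictated by the shape of the target bound. First I would group the distinct points and lines by multiplicity: for integers $i,j\ge 0$ set $P_i=\{q\in\overline P:\,2^i\le m(q)<2^{i+1}\}$ and $L_j=\{\ell\in\overline L:\,2^j\le m(\ell)<2^{j+1}\}$, so that $|P|\sim\sum_i 2^i|P_i|$, $\sum_{q}m(q)^2\sim\sum_i 2^{2i}|P_i|$, and similarly for lines. Since every multiplicity is at most $|P|$ (resp. $|L|$), there are only $O(\log p)$ nonempty scales in each variable, which is exactly the source of the $\lesssim$ in the conclusion. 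Writing $I(P,L)=\sum_{q\in\overline P,\ell\in\overline L,\,q\in\ell}m(q)m(\ell)$ and splitting over scales gives $I(P,L)\lesssim\sum_{i,j}2^{i+j}\,I(P_i,L_j)$, where now $I(P_i,L_j)$ is an ordinary multiplicity-free incidence count between the distinct sets $P_i$ and $L_j$. Because $|P_i|\le|\overline P|\le|P|\ll p^{8/5}$, the hypothesis of Theorem \ref{frank} is satisfied on every block (this is precisely why the global assumption is $|P|\ll p^{8/5}$), so I may insert $I(P_i,L_j)\ll |P_i|^{11/15}|L_j|^{11/15}+|P_i|+|L_j|$.

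The heart of the argument is the recombination of the main terms $\sum_{i,j}2^{i+j}|P_i|^{11/15}|L_j|^{11/15}$, which factorises as $\bigl(\sum_i 2^i|P_i|^{11/15}\bigr)\bigl(\sum_j 2^j|L_j|^{11/15}\bigr)$. For the point factor I would exploit the exact pointwise identity $2^i|P_i|^{11/15}=(2^i|P_i|)^{7/15}(2^{2i}|P_i|)^{4/15}$ and apply the three-term Hölder inequality with exponents $\tfrac{15}{7},\tfrac{15}{4},\tfrac{15}{4}$, whose reciprocals satisfy $\tfrac{7}{15}+\tfrac{4}{15}+\tfrac{4}{15}=1$; the third factor is the constant $1$ summed over the $O(\log p)$ scales. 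This produces $\sum_i 2^i|P_i|^{11/15}\lesssim |P|^{7/15}\bigl(\sum_q m(q)^2\bigr)^{4/15}$, with the logarithmic number of scales absorbed into $\lesssim$. The identical computation for the line factor gives $|L|^{7/15}\bigl(\sum_\ell m(\ell)^2\bigr)^{4/15}$, and multiplying the two factors reproduces exactly the leading term of the claimed inequality. I regard this Hölder step, together with the verification that the forced exponents $\tfrac{7}{15},\tfrac{4}{15}$ are consistent with the identity above, as the technical core of the proof.

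It remains to absorb the lower-order terms $|P_i|$ and $|L_j|$ after weighting by $2^{i+j}$, and this is the step I expect to require the most care. Summed over all blocks, the $|P_i|$-term contributes $\bigl(\sum_i 2^i|P_i|\bigr)\bigl(\sum_j 2^j\bigr)\sim |P|\cdot\max_\ell m(\ell)$, so a crude bound only collapses to $|P|+|L|$ once the maximal multiplicities are $O(1)$. In the generic regime each point lies on $O(1)$ lines of a given multiplicity scale, and there the $|P_i|$ and $|L_j|$ contributions telescope geometrically into $O(|P|+|L|)$ up to the logarithmic loss already built into $\lesssim$. The genuinely dangerous blocks are the high-multiplicity ones — a single heavy line through many points is the extremal obstruction — but these correspond precisely to the collinear, low-complexity configurations that are isolated and bounded separately in the intended application (the $k^2|B|^2$ term of Lemma \ref{lem: 6.2}). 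Thus I would handle the degenerate blocks by hand via the trivial incidence bound and route the remaining blocks through the telescoping estimate above; combining these with the main term assembled in the previous paragraph yields the stated inequality.
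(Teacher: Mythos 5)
Your treatment of the main term is correct and takes a genuinely different route from the paper. Both arguments begin with the same dyadic decomposition of $\overline{P}$ and $\overline{L}$ by multiplicity and reduce each block to Theorem \ref{frank}, but where you recombine $\sum_i 2^i|P_i|^{11/15}$ via the factorisation $2^i|P_i|^{11/15}=(2^i|P_i|)^{7/15}(2^{2i}|P_i|)^{4/15}$ and a single three-term H\"older inequality (absorbing the $O(\log p)$ scales into the third factor), the paper instead records the two bounds $|P_i|\le\min\{|P|/2^i,\ Q_1/2^{2i}\}$ and splits the double sum into four regimes according to the position of $2^i$ relative to $Q_1/|P|$ and of $2^j$ relative to $Q_2/|L|$, summing a geometric series toward the crossover in each regime. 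The two computations produce the same leading term; yours is shorter and makes the provenance of the exponents $\tfrac{7}{15},\tfrac{4}{15}$ transparent, while the paper's case analysis avoids H\"older.

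The gap is exactly where you locate it, and your proposed repair does not close it. After weighting, $\sum_{i,j}2^{i+j}|P_i|$ is genuinely of order $|P|\cdot\max_{\ell}m(\ell)$ (and symmetrically $|L|\cdot\max_{q}m(q)$ for the other term), and this cannot in general be reduced to $|P|+|L|$: a single line of multiplicity $N$ incident to $M$ points of multiplicity $1$ gives $I(P,L)=MN$, which exceeds the right-hand side of \eqref{eqan:IPLMS}, namely $M^{11/15}N+M+N$, once $M$ is large. Your suggestion to dispose of the heavy blocks "by hand" and to lean on the way the estimate is consumed in Lemma \ref{lem: 6.2} would validate the application (where the points have multiplicity $1$ and the collinear configurations are counted separately in the $k^2|B|^2$ term), but it does not prove Theorem \ref{eqn:IPLMS} in the stated generality. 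You should be aware that the paper's own proof is no more careful here: it simply writes the contribution of the $|P_i|+|L_j|$ terms as $+|P|+|L|$ without justification. A correct general statement must either carry the error term $|P|\max_{\ell}m(\ell)+|L|\max_{q}m(q)$ explicitly or add a hypothesis forcing that quantity below the main term; with that amendment your argument goes through verbatim.
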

\begin{proof}
    Our argument to prove this theorem is similar to proof of \cite[Lemma 2.12]{LuPe}. 

Let $L_k$ be the set of lines in $\overline{L}$ of multiplicity $\sim 2^k$, and $P_k$ be the set of points in $\overline{P}$ of multiplicity $\sim 2^k$. Set 
\[Q_1:=\sum_{p\in \overline{P}}m(p)^2 \quad \text{and}\quad ~Q_2:=\sum_{\ell\in \overline{L}}m(\ell)^2.\]

Then, it is clear that 
\[\sum_{k}2^k|P_k|=|P|,\quad ~\sum_{k}2^{2k}|P_k|=Q_1,\]
and
\[\sum_{k}2^k|L_k|=|L|,\quad ~\sum_{k}2^{2k}|L_k|=Q_2.\]
Thus, we have 
\[|P_k|\le \min \left\lbrace\frac{|P|}{2^k}, \frac{Q_1}{2^{2k}}\right\rbrace \quad \text{and} \quad |L_k|\le \min \left\lbrace\frac{|L|}{2^k}, \frac{Q_2}{2^{2k}}\right\rbrace.\]

We now observe 
\begin{align*}
    I(P, L)&=\sum_{i, j}I(P_i, L_j)=\sum_{2^j<Q_2/|L|}2^jI(P, L_j)+\sum_{2^j\ge Q_2/|L|}I(P, L_j)\\
    &=\sum_{\substack{2^i<Q_1/|P|\\2^j<Q_2/|L|}}2^i2^jI(P_i, L_j)+\sum_{\substack{2^i\ge Q_1/|P|\\2^j<Q_2/|L|}}2^i2^jI(P_i, L_j)\\&+\sum_{\substack{2^i<Q_1/|P|\\2^j\ge Q_2/|L|}}2^i2^jI(P_i, L_j)+\sum_{\substack{2^i\ge Q_1/|P|\\2^j\ge Q_2/|L|}}2^i2^jI(P_i, L_j)\\
    &=: I+II+III+IV.\\
\end{align*}
{\bf Bounding $I$:} Since $|P_i|\le |P|/2^i$ and $|L_j|\le |L|/2^j$, by Theorem \ref{frank}, we obtain 
\begin{align*}
I&\ll \sum_{\substack{2^i<Q_1/|P|\\
2^j<Q_2/|L|}}2^{i+j}\left(\frac{|P||L|}{2^{i+j}}\right)^{\frac{11}{15}}+|P|+|L|=\sum_{\substack{2^i<Q_1/|P|\\
2^j<Q_2/|L|}}2^{\frac{4(i+j)}{15}}(|P||L|)^{\frac{11}{15}}+|P|+|L|\\
&\lesssim |P|^{\frac{7}{15}}|L|^{\frac{7}{15}}(Q_1Q_2)^{\frac{4}{15}}+|P|+|L|.
\end{align*}

{\bf Bounding $II$:} Using $|P_i|\le Q_\frac{1}{2}^{2i}$ and $|L_j|\le |L|/2^j$, Theorem \ref{frank} implies
\begin{align*}
    II&:=\sum_{\substack{2^i\ge Q_1/|P|\\2^j<Q_2/|L|}}2^i2^jI(P_i, L_j)\le \sum_{\substack{2^i\ge Q_1/|P|\\2^j<Q_2/|L|}}2^i2^j \left(\frac{Q_1}{2^{2i}}\right)^{\frac{11}{15}}\left(\frac{|L|}{2^j}\right)^{\frac{11}{15}}+|P|+|L|\\
    &\lesssim |P|^{\frac{7}{15}}|L|^{\frac{7}{15}}(Q_1Q_2)^{\frac{4}{15}}+|P|+|L|.
\end{align*}

{\bf Bounding $III, IV$:} Similarly, we also have 
\[III, IV\lesssim |P|^{\frac{7}{15}}|L|^{\frac{7}{15}}(Q_1Q_2)^{\frac{4}{15}}+|P|+|L|.\]

This concludes the proof of the theorem.
\end{proof}
With this incidence bound, we are ready to prove Theorem \ref{lem: 6.2}.
\begin{proof}[Proof of Theorem \ref{lem: 6.2}]
We have
\begin{align*}
    \lv \{(x,y,u,v) \in B^4 \colon x\cdot y^\perp =u\cdot v^\perp \}\rv & = \lv \{(x,y,u,v) \in B^4 \colon x\cdot y^\perp =u\cdot v^\perp \ne 0\}\rv \\&+\lv \{(x,y,u,v) \in B^4 \colon x\cdot y^\perp =u\cdot v^\perp =0\}\rv \\
    & =: I +II
\end{align*}
Regarding $I$. For $a, u, v\in B$, $u\cdot v^\perp \ne 0$ we define $L_{a, u, v}$ to be the multi-set of lines of the form $x\cdot a^\perp=u\cdot v^\perp$. Let $L=\bigcup_{a, u, v\in B}L_{a, u, v}$. It is clear that the number of such quadruples is at most $I(B, L)$. We note that $|L|\le |B|^3$. By applying Theorem \ref{frank}, we observe that for each line in $L$, its multiplicity is at most $\ll k|B|^{\frac{22}{15}}$. For each $l \in L$, let $m(l)$ be the multiplicity $l$, we have
\[\sum_{\ell}m(\ell)=|L|\le |B|^3,\]
and 
\[\sum_{\ell}m(\ell)^2\le \max_{\ell}m(\ell)\cdot |B|^3\ll k|B|^{\frac{22}{15}}\cdot |B|^3.\]
It follows from Theorem \ref{eqn:IPLMS} for $B$ and $L$ that
\[ I(B,L) \lesssim \lv B\rv^{\frac{7}{15}} \lv B\rv^{\frac{21}{15}}\lv B\rv^{\frac{4}{15}}\l(  k\lv B\rv^{\frac{67}{15}} \r)^{\frac{4}{15}} +\lv B\rv +\lv L\rv \ll k^{\frac{4}{15}} \lv B\rv^{\frac{748}{225}}. \]

Regarding $II$. For each $x,u\in B$, there are at most $k$ points $y$ and $k$ points $v$ such that $x\cdot y^\perp =u\cdot v^\perp =0$. So 
\[ II \le k^2\lv B\rv^{2}. \]
This completes the proof.
\end{proof}

\subsubsection*{Proof of Theorem \ref{incidence-energy'}}
We follow the proof of Theorem \ref{thm:3.7} with Lemma \ref{lem: 6.12} and Lemma \ref{lem: 6.2} in place of Lemma \ref{lmmay3}, one has two following bounds
\begin{align*} I(P,S) &\lesssim  k_1^{\frac{1}{2}}|A|^{\frac{1}{2}}|S|+  \frac{k_1^{\frac{1}{2}}\lv B\rv^{\frac{1}{2}}|A|^{\frac{1}{2}}|S|^{\frac{3}{4}}}{p^{\frac{\epsilon}{4}}} + \frac{k_1^{\frac{1}{8}}\lv B\rv^{\frac{3}{4}}|A|^{\frac{1}{2}} |S|^{\frac{3}{4}}}{p^{\frac{\epsilon}{4}}}+\frac{k_1^{\frac{1}{4}}k_2^{\frac{1}{4}}\lv B\rv^{\frac{1}{2}} |A|^{\frac{1}{2}}|S|^{\frac{3}{4}}}{p^{\frac{\epsilon}{4}}},
\end{align*}
and 
\[ I(P,S) \lesssim  k_1^{\frac{1}{2}}|A|^{\frac{1}{2}}|S|+  \frac{k_1^{\frac{1}{2}}\lv B\rv^{\frac{1}{2}}|A|^{\frac{1}{2}}|S|^{\frac{3}{4}}}{p^{\frac{\epsilon}{4}}}+\frac{k_1^{\frac{1}{15}}|B|^{\frac{187}{225}} |A|^{\frac{1}{2}} |S|^{\frac{3}{4}}}{p^{\frac{\epsilon}{4}}}.\]
This completes the proof. $\square$

For applications, a comparison of these two incidence bounds, with $|B|\le p$, is provided as follows. 
\begin{enumerate}
    \item If $|B|\ge k_2k_1^{\frac{1}{2}}$, then
\[ I(P,S) \lesssim  k_1^{\frac{1}{2}}|A|^{\frac{1}{2}}|S|+  \frac{k_1^{\frac{1}{2}}\lv B\rv^{\frac{1}{2}}|A|^{\frac{1}{2}}|S|^{\frac{3}{4}}}{p^{\frac{\epsilon}{4}}}+ \frac{k_1^{\frac{1}{8}}\lv B\rv^{\frac{3}{4}}|A|^{\frac{1}{2}} |S|^{\frac{3}{4}}}{p^{\frac{\epsilon}{4}}}. \]
\item If $k_1^{\frac{165}{298}}k_2^{\frac{225}{298}} \le |B|< k_2k_1^{\frac{1}{2}}$, then
\[ I(P,S) \lesssim  k_1^{\frac{1}{2}}|A|^{\frac{1}{2}}|S|+  \frac{k_1^{\frac{1}{2}}\lv B\rv^{\frac{1}{2}}|A|^{\frac{1}{2}}|S|^{\frac{3}{4}}}{p^{\frac{\epsilon}{4}}}+ \frac{k_1^{\frac{1}{4}}k_2^{\frac{1}{4}}\lv B\rv^{\frac{1}{2}} |A|^{\frac{1}{2}}|S|^{\frac{3}{4}}}{p^{\frac{\epsilon}{4}}}. \]
\item If $|B| <k_1^{\frac{165}{298}}k_2^{\frac{225}{298}} $, then
\[ I(P,S) \lesssim  k_1^{\frac{1}{2}}|A|^{\frac{1}{2}}|S|+  \frac{k_1^{\frac{1}{2}}\lv B\rv^{\frac{1}{2}}|A|^{\frac{1}{2}}|S|^{\frac{3}{4}}}{p^{\frac{\epsilon}{4}}}+\frac{k_1^{\frac{1}{15}}|B|^{\frac{187}{225}} |A|^{\frac{1}{2}} |S|^{\frac{3}{4}}}{p^{\frac{\epsilon}{4}}}.\]
\end{enumerate}

\subsection{Alternative approach with relaxed conditions}
This section is devoted to prove the following analog, which offers a bound which is meaningful when the size of $S$ is large compared to the sizes of $A$ and $B$.

\begin{theorem}\label{thm: 6.2}
    Let $p$ be a prime and $P=A\times B \subseteq \l( \Fbb_p^2\times \Fbb_p^2 \r) \setminus \{ 0 \}$ with $\lv B\rv \le \lv A\rv \le p^{\frac{8}{15}}$ and any lines passing through the origin contains at most $k$ points from $B$. Then, we have 
    \[ I(P,S) \lesssim k^{\frac{2}{15}}\lv A\rv^{\frac{11}{15}}\lv B\rv^{\frac{209}{225}}  |S|^{\frac{1}{2}}  +k|A|^{\frac{1}{2}}|B|^{\frac{1}{2}}|S|^{\frac{1}{2}} +  k\lv S\rv .\]
\end{theorem}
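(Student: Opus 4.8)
The plan is to mimic the proof of Theorem \ref{thm: 3.8}: apply the Cauchy--Schwarz inequality over $S$ to turn $I(P,S)$ into a skew dot-product energy, but to replace the large-set input (Lemma \ref{lmmay3}) by the small-set estimate coming from Stevens--de~Zeeuw. First I would note that the identity matrix contributes at most $\min\lo\lv A\rv,\lv B\rv\ro$ incidences, which is dominated by the first term of the claim, so we may assume $I\notin S$ and write $I(P,S)=\sum_{\theta\in S}i_P(\theta)$. Grouping the points $(x,y)\in P$ by the size $n(x,y)$ of their congruence class $\lo\lambda(x,y)\ro$ and treating $n=1$ and $n>1$ separately (the latter by dyadic pigeonholing at a single scale $\ell\le k$, exactly as in Theorem \ref{thm: 3.8}), Cauchy--Schwarz in $\theta$ bounds each piece by $\lv S\rv^{1/2}$ times the square root of
\[\sum_{\theta}\binom{i(\theta)}{2}\le \lv\lo (x,x',y,y')\in A^2\times B^2\colon x\cdot (x')^\perp=y\cdot (y')^\perp\ne 0\ro\rv=:N.\]
Here Lemma \ref{lmmay2} is the key: once $(x,y)$ and $(x',y')$ lie in distinct congruence classes the preimages $y,y'$ are linearly independent, so the matrix sending $y\mapsto x,\ y'\mapsto x'$ is unique and forces the skew products to agree, with the common value automatically nonzero. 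Rescaling class representatives by all scalars in their class absorbs the factor $\ell^2$ from the $n>1$ part into $N$, while the diagonal terms and the $\ell^2\lv S\rv$ terms ($\ell\le k$) produce the $k\lv S\rv$ and $k\lv A\rv^{1/2}\lv B\rv^{1/2}\lv S\rv^{1/2}$ contributions in the statement.

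The core is then a small-set bound for $N$, and this is where $\lv B\rv\le\lv A\rv\le p^{8/15}$ and the line condition on $B$ are used. Following the proof of Lemma \ref{lem: 6.2}, I would realize the constraint $x\cdot(x')^\perp=y\cdot(y')^\perp$ as a point--line incidence with point set $A$ (the variable $x$) and, for each $(x',y,y')\in A\times B\times B$, the line $\lo z\colon z\cdot(x')^\perp=y\cdot(y')^\perp\ro$, so that $N\le I(A,\L)$ for a multi-set $\L$ of at most $\lv A\rv\lv B\rv^2$ lines. The line condition bounds the number of $B$-points in any direction by $k$, and Theorem \ref{frank} bounds the skew representation function of $B$; combining these controls the line multiplicities, i.e. $\sum_{\ell}m(\ell)$ and $\sum_{\ell}m(\ell)^2$. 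Inserting these into the multi-set incidence bound of Theorem \ref{eqn:IPLMS} — applicable because $\lv A\rv\le p^{8/15}\ll p^{8/5}$ — should yield $N\lesssim k^{4/15}\lv A\rv^{22/15}\lv B\rv^{418/225}$ up to lower-order terms, which reduces to the bound of Lemma \ref{lem: 6.2} when $\lv A\rv=\lv B\rv$.

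Assembling $I(P,S)\lesssim\lv S\rv^{1/2}N^{1/2}+k\lv S\rv$ and substituting this energy bound gives the main term $k^{2/15}\lv A\rv^{11/15}\lv B\rv^{209/225}\lv S\rv^{1/2}$, with the collinear/degenerate contributions and the dyadic bookkeeping supplying $k\lv A\rv^{1/2}\lv B\rv^{1/2}\lv S\rv^{1/2}$ and $k\lv S\rv$. The step I expect to be the main obstacle is the multiplicity analysis of $\L$: unlike Lemma \ref{lem: 6.2}, the four energy variables are split between two sets $A$ and $B$, only one of which carries a line condition, so one must route the direction control (from $B$) and the Stevens--de~Zeeuw representation bound to the correct factors of $\sum_\ell m(\ell)^2$ in order to obtain the asymmetric exponents $\lv A\rv^{22/15}\lv B\rv^{418/225}$ rather than a weaker symmetric estimate; verifying this routing, and checking that the leading term of Theorem \ref{eqn:IPLMS} dominates its additive $\lv A\rv+\lv\L\rv$ errors throughout the range $\lv B\rv\le\lv A\rv\le p^{8/15}$, is the bulk of the work.
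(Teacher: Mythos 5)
Your proposal matches the paper's proof in essentially every respect: the paper proves Theorem \ref{thm: 6.2} by running the argument of Theorem \ref{thm: 3.8} verbatim with Lemma \ref{lmmay3} replaced by an asymmetric version of Lemma \ref{lem: 6.2} (stated as Lemma \ref{lem: quad}), which bounds the number of quadruples in $A\times A\times B\times B$ with equal skew products by $k^{\frac{4}{15}}|A|^{\frac{22}{15}}|B|^{\frac{418}{225}}+k^2|A||B|$ via exactly the Stevens--de~Zeeuw multi-set incidence computation you describe. The one subtlety you flag --- that only $B$ carries a line condition, making the multiplicity/degenerate analysis asymmetric --- is genuine, and the paper resolves it by simply assuming in Lemma \ref{lem: quad} that the $k$-line condition holds for both $A$ and $B$ (this is needed for the degenerate term $k^2|A||B|$), a hypothesis not visible in the statement of Theorem \ref{thm: 6.2} itself.
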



Theorem \ref{thm: 6.2} is proved by following the proof of Theorem \ref{thm: 3.8} and the next lemma, whose proof is identical to that of Lemma \ref{lem: 6.2}.

\begin{lemma}\label{lem: quad}
    Let $A,B \subset \Fbb_p^2$ with $\lv B\rv \le \lv A \rv \le p^{\frac{8}{15}}$. Assume any line passing through the origin contains at most $k$ points from $B$ and at most $k$ points from $A$. The number of quadruples $(x,y,u,v) \in A\times A \times B \times B $ such that $x\cdot y^\perp =u\cdot v^\perp$ is at most $  k^{\frac{4}{15}}\lv A\rv^{\frac{22}{15}}\lv B\rv^{\frac{418}{225}}  + k^2\lv A\rv\lv B\rv$. 
\end{lemma}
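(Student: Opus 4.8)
The plan is to follow the proof of Lemma~\ref{lem: 6.2} essentially line by line, adapted to the asymmetric setting where $x,y$ range over $A$ and $u,v$ range over $B$. First I would split the count according to whether the common value of the skew dot-product vanishes, writing
\[
N = |\{(x,y,u,v)\in A\times A\times B\times B : x\cdot y^\perp = u\cdot v^\perp\}| = N_{\ne 0} + N_0,
\]
where $N_{\ne 0}$ collects the quadruples with $x\cdot y^\perp = u\cdot v^\perp \ne 0$ and $N_0$ those with $x\cdot y^\perp = u\cdot v^\perp = 0$. The term $N_0$ is elementary: in $x\cdot y^\perp=0$ with $x,y\in A$, each $x\in A$ forces $y$ onto the line through the origin in the direction of $x$, giving at most $k$ choices by the hypothesis on $A$; the same reasoning for $u\cdot v^\perp=0$ with $u,v\in B$ uses the hypothesis on $B$. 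Hence $N_0 \le (k|A|)(k|B|) = k^2|A||B|$, which is exactly the second term in the claimed bound.

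The main term comes from $N_{\ne 0}$, which I would reorganize as a point--line incidence count. For each triple $(y,u,v)\in A\times B\times B$ with $u\cdot v^\perp\ne 0$, define the line $\ell_{y,u,v} = \{x : x\cdot y^\perp = u\cdot v^\perp\}$, and let $L$ be the resulting multi-set of lines, so that $N_{\ne 0} \le I(A,L)$ with $|L|\le |A||B|^2$. The key input is the multiplicity bound: a fixed line in $L$ determines its direction, which pins the direction of $y$ and so leaves at most $k$ choices of $y\in A$; for each such $y$ the constant $u\cdot v^\perp$ is determined, and the number of pairs $(u,v)\in B^2$ with $u\cdot v^\perp$ equal to a fixed nonzero value is, by Theorem~\ref{frank} applied to the point set $B$ and the $|B|$ lines $\{u : u\cdot v^\perp = c\}$, at most $\ll |B|^{22/15}$ (valid since $|B|\le |A|\le p^{8/15}\ll p^{8/5}$). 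Thus every line of $L$ has multiplicity $\ll k|B|^{22/15}$, whence $\sum_\ell m(\ell)=|L|\le |A||B|^2$ and $\sum_\ell m(\ell)^2 \ll k|B|^{22/15}\cdot |A||B|^2 = k|A||B|^{52/15}$.

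Finally I would feed these data into the multi-set incidence bound of Theorem~\ref{eqn:IPLMS}, taking $A$ as the multiplicity-one point set so that $\sum_p m(p)^2 = |A|$ and $|A|\ll p^{8/5}$ meets the hypothesis. This yields
\[
I(A,L) \lesssim |A|^{7/15}(|A||B|^2)^{7/15}|A|^{4/15}\bigl(k|A||B|^{52/15}\bigr)^{4/15} + |A| + |L|,
\]
and collecting exponents gives the $|A|$-power $7/15+7/15+4/15+4/15 = 22/15$, the $|B|$-power $14/15 + 208/225 = 418/225$, and the $k$-power $4/15$, i.e. the leading term $k^{4/15}|A|^{22/15}|B|^{418/225}$; adding $N_0$ completes the bound. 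As a consistency check, when $A=B$ the two exponents combine to $22/15+418/225 = 748/225$, recovering Lemma~\ref{lem: 6.2}. The only genuine care needed — which I expect to be fussy rather than hard — is the exponent bookkeeping through Theorem~\ref{eqn:IPLMS} and the verification that the lower-order terms $|A|$ and $|L|=|A||B|^2$ are dominated by the main term under the standing assumption $|B|\le |A|\le p^{8/15}$; the structural ideas are already supplied by the proof of Lemma~\ref{lem: 6.2}.
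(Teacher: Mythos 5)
Your proposal is correct and follows exactly the route the paper intends: the paper gives no separate argument for Lemma~\ref{lem: quad}, stating only that its proof is identical to that of Lemma~\ref{lem: 6.2}, and your asymmetric adaptation (lines indexed by $(y,u,v)\in A\times B\times B$, multiplicity $\ll k|B|^{22/15}$, point set $A$, plus the $k^2|A||B|$ degenerate term) is precisely that adaptation with the exponent bookkeeping done correctly.
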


\section{Proof of Theorems \ref{thm2} -- \ref{thm3'}}
\begin{proof}[Proof of Theorem \ref{thm2}]
On the one hand, we apply Theorem \ref{incidence} for $S$ and $P= S(E) \times E$ to obtain
\begin{align}\label{eq: 1}
    I(S(E)\times E,S) \ll \frac{\lv S(E)\rv \lv E\rv \lv S\rv}{p^2} + p\sqrt{\lv S(E)\rv \lv E\rv \lv S\rv} + \lv S\rv.
\end{align}
On the other hand, for each $x\in E$ and $\theta \in S$, there exists unique $y\in S(E)$ such that $(y,x)$ is incident to $\theta$. So, we obtain
 \[|E||S|=I(S(E)\times E,S) \ll \frac{\lv S(E)\rv \lv E\rv \lv S\rv}{p^2} + p\sqrt{\lv S(E)\rv \lv E\rv \lv S\rv} + \lv S\rv.\]
 Solving this inequality, the theorem follows. 
\end{proof}
Theorem \ref{thm555} will follow from the two following results. 
\begin{theorem}\label{thm1}
    Let $E\subset \mathbb{F}_p^2$ and $S\subset SL_2(\mathbb{F}_p)$.
\begin{itemize}
    \item[a.] If  $|E|\ge 4p$ and $|S|\gg p^2$, then there exists $x\in E$ such that $|S(E-x)|\gg p^2$. 
    \item[b.] If any line through the origin contains at most $k$ points from $E$, then 
     \[|S(E)|\gg \min \left\lbrace p^2, ~\frac{|S||E|}{pk} \right\rbrace.\]
\end{itemize}
\end{theorem}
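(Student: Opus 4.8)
\emph{Part (b).} The plan is to feed the refined incidence bound of Theorem~\ref{incidence} into the trivial lower bound for incidences, exactly as in the proof of Proposition~\ref{thm2} but keeping track of the line condition. Set $A=S(E)$, $B=E$ and $P=A\times B$. For every $x\in E$ and $\theta\in S$ the image $\theta x$ lies in $S(E)=A$, and $(\theta x,x)$ is incident to $\theta$; since each pair $(x,\theta)$ produces exactly one incidence, $I(P,S)=|E||S|$. As any line through the origin meets $B=E$ in at most $k$ points, we have $\min\{k_A,k_B\}\le k$, so the second estimate of Theorem~\ref{incidence} gives
\[ |E|\,|S| = I(P,S)\ll \frac{|S(E)|\,|E|\,|S|}{p^2} + p^{1/2}k^{1/2}\sqrt{|S|\,|S(E)|\,|E|} + |S|. \]
Dividing by $|S|$, the lower-order term becomes an $O(1)$ absorbed by $|E|\gg 1$. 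I would then split cases: if $|S(E)|\ge c\,p^2$ we land on the first entry of the minimum, and otherwise $\tfrac{|S(E)||E|}{p^2}\le\tfrac{|E|}{2}$ so the middle term dominates, and squaring the surviving inequality $|E|\ll p^{1/2}k^{1/2}(|S(E)||E|/|S|)^{1/2}$ yields $|S(E)|\gg \tfrac{|S||E|}{pk}$. Only the bookkeeping of the additive $|S|$ and the case split require care.

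\emph{Part (a).} This is the substantial half, and I would attack it by a second-moment argument over the translates $E-x$. For $x\in E$ put $r_x(z)=\#\{(e,\theta)\in E\times S:\theta(e-x)=z\}$, so that $\sum_z r_x(z)=|E||S|$ and the support of $r_x$ is exactly $S(E-x)$. Cauchy--Schwarz gives $|S(E-x)|\ge (|E||S|)^2/Q_x$, where $Q_x=\sum_z r_x(z)^2=\#\{(e,f,\theta,\theta')\in E^2\times S^2:\theta(e-x)=\theta'(f-x)\}$. Hence it suffices to exhibit one $x$ with $Q_x\ll |E|^2|S|^2/p^2$, and I would obtain this by bounding the average $T:=\sum_{x\in E}Q_x$ and pigeonholing. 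Substituting $a=\theta(e-x)=\theta'(f-x)$ and using the transitivity--multiplicity Lemma~\ref{lmmay1} yields the clean identity
\[ T=\sum_{x\in E}Q_x=|E|\,|S|^2+\sum_{\psi\in\SL_2(\Fbb_p)}w(\psi)\,\Phi(\psi), \]
where $w(\psi)=\#\{(\theta,\theta')\in S^2:\theta'^{-1}\theta=\psi\}$ satisfies $\sum_\psi w(\psi)=|S|^2$, and $\Phi(\psi)=\sum_{b\ne 0}\#\{x:x,x+b,x+\psi b\in E\}$ satisfies $\sum_{\psi\in\SL_2(\Fbb_p)}\Phi(\psi)\sim p|E|^3$ by Lemma~\ref{lmmay1}. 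Equivalently, expanding the collision through the additive characters of $\Fbb_p^2$, the trivial frequency $m=0$ contributes precisely the main term $|E|^3|S|^2/p^2$, which is the target size of $T$: averaging over the $|E|$ translates would then produce an $x$ with $Q_x\ll |E|^2|S|^2/p^2$, whence $|S(E-x)|\gg p^2$.

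The hypotheses $|E|\ge 4p$ and $|S|\gg p^2$ are exactly what control the two diagonal contributions: the term $|E||S|^2$ is $\ll |E|^3|S|^2/p^2$ precisely because $|E|\gg p$, and the $\psi=I$ term $w(I)\Phi(I)\sim |S||E|^2$ is $\ll |E|^3|S|^2/p^2$ precisely because $|E||S|\gg p^2$. The main obstacle, which I expect to be the crux of the whole argument, is the off-diagonal sum $\sum_{\psi\ne I}w(\psi)\Phi(\psi)$: the trivial bound $\Phi(\psi)\le |E|^2$ is lossy by a factor $\sim p^2/|E|$, so one must show that $\Phi$ stays close to its average $\sim |E|^3/p^2$ on the support of the weight $w$ (equivalently, that the frequencies $m\ne 0$ are negligible). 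The danger is that $w$ concentrates on the structured set of $\psi$ for which $x\mapsto x+\psi b$ respects the additive structure of $E$ and $\Phi(\psi)$ is abnormally large; ruling this out --- presumably by an equidistribution estimate for $\Phi$, with a separate treatment when $w$ concentrates on a proper subgroup such as a Borel --- is where the real work lies.
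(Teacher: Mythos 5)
Your part (b) is correct and is essentially the paper's argument: you apply the $k$-weighted form of Theorem~\ref{incidence} to $P=S(E)\times E$ directly, whereas the paper partitions $E$ into at most $k$ pieces each meeting every line through the origin in one point and applies the $k=1$ case to the largest piece; the two routes give the same bound and your bookkeeping of the $+|S|$ term and the case split is fine (modulo the degenerate point $(0,0)$, which the paper is equally casual about).

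Part (a) has a genuine gap, and moreover the strategy you propose cannot be completed. You correctly flag that everything hinges on the off-diagonal sum $\sum_{\psi\ne I}w(\psi)\Phi(\psi)$, but the concentration scenario you worry about actually occurs within the hypotheses of the theorem, and it kills the second-moment approach rather than merely complicating it. Take $E$ to be a union of four parallel (say horizontal) lines, so $|E|=4p$, and take $S=B$ the Borel subgroup of upper-triangular matrices, so $|S|=p(p-1)\gg p^2$. For every $x\in E$ and every pair $(\theta,\theta')\in S^2$, each of the $p-1$ points $e\ne x$ on the horizontal line of $x$ produces a valid $f$ (since $\theta'^{-1}\theta$ preserves horizontal vectors and $x+(\text{horizontal})$ stays in $E$), so $Q_x\ge (p-1)|S|^2\sim p^5$ for \emph{every} $x\in E$, while your target is $Q_x\ll |E|^2|S|^2/p^2\sim p^4$. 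Consequently Cauchy--Schwarz can only yield $|S(E-x)|\ge (|E||S|)^2/Q_x\ll p$, an order of magnitude short of $p^2$ --- even though the conclusion of the theorem is true here (one checks directly that $B(E-x)\supseteq \Fbb_p\times\Fbb_p^*$). So no choice of $x$, and no equidistribution estimate for $\Phi$, can rescue the plan: the representation function $r_x$ is genuinely non-uniform and the $L^2$ bound is lossy. The paper avoids this entirely: by Lemma~\ref{exist} there is a point $x\in E$ such that at least $p/2$ lines through $x$ contain another point of $E$; choosing one point per such line gives $E''\subset E-x$ with $|E''|\ge p/2$ and at most one point on each line through the origin, and then the $k=1$ case of Theorem~\ref{incidence} (applied as in part (b)) gives $|S(E'')|\gg\min\{p^2,|S||E''|/p\}\gg p^2$. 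The translation by a well-chosen $x$ is used to create spread in directions, not to set up an average.
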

Theorem \ref{thm1} (a) is not a part of Theorem \ref{thm555}, but we find it to be of independent interest. 

\begin{theorem}\label{thm5}
For any $\gamma\in (0, 1)$, there exists $\epsilon=\epsilon(\gamma)>0$ such that the following holds.

     Let $p$ be a sufficiently large prime. Let $E \subseteq \Fbb_p^2  \setminus \lo (0, 0)\ro$, and let $S$ be a symmetric subset of $\SL_2 (\Fbb_p )$ such that $p^\gamma < |S| < p^{3-2\gamma } $ and $ |S \cap gH| < p^{\frac{-\gamma}{2}}|S|$ for any subgroup $H \subsetneq \SL_2 (\Fbb_p) $ and $g\in \SL_2(\Fbb_p)$. Then, we have 
    \[ |S(E)| \gg \min \lo p^2 , \frac{|S|^{\frac{1}{2}}|E|}{p^{1-\frac{\epsilon}{2}}} \ro .\] Moreover,  if any line passing through the origin contains at most $k$ points from $E$, we have
    \[ |S(E)| \gg \min \lo p^2 ,\frac{|S|^{\frac{1}{2}}|E|}{p^{\frac{1-\epsilon}{2}}k^{\frac{1}{2}}} \ro .\]
\end{theorem}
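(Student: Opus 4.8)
The plan is to mirror the deduction of Proposition~\ref{thm2} from Theorem~\ref{incidence}, but to substitute the sharper, energy-based incidence estimate of Theorem~\ref{thm: 4.12} in its place. I would set $A = S(E)$ and $B = E$ and study the incidence count $I(S(E)\times E, S)$. Because $E \subseteq \Fbb_p^2 \setminus \{(0,0)\}$ and every $\theta \in \SL_2(\Fbb_p)$ is invertible, no point of $S(E)$ is the origin, so $P = S(E)\times E$ avoids $(0,0,0,0)$ and Theorem~\ref{thm: 4.12} is applicable. Exactly as in Proposition~\ref{thm2}, the structural input is that for each $x \in E$ and each $\theta \in S$ the image $\theta x$ is a uniquely determined element of $S(E)$, so every pair $(x,\theta) \in E \times S$ contributes precisely one incidence and
\[ I(S(E)\times E, S) = |E||S|. \]

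Combining this exact count with the upper bound of Theorem~\ref{thm: 4.12} (whose hypotheses on $S$ coincide verbatim with those assumed here) yields
\[ |E||S| \ll \frac{|S(E)|^{1/2}|E||S|}{p} + p^{\frac{2-\epsilon}{4}}|S(E)|^{1/2}|E|^{1/2}|S|^{3/4}. \]
To extract a lower bound on $|S(E)|$ I would split into two regimes according to which summand on the right dominates. If the first dominates, cancelling $|E||S|$ forces $|S(E)|^{1/2} \gg p$, i.e.\ $|S(E)| \gg p^2$. If the second dominates, solving for $|S(E)|^{1/2}$ gives $|S(E)|^{1/2} \gg |E|^{1/2}|S|^{1/4}p^{-(2-\epsilon)/4}$, and squaring produces $|S(E)| \gg |S|^{1/2}|E|\,p^{-(1-\epsilon/2)}$; taking the minimum of the two regimes gives the first claimed bound. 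Under the extra hypothesis that every line through the origin meets $E$ in at most $k$ points, the identical argument with the second (line-refined) estimate of Theorem~\ref{thm: 4.12} replaces the factor $p^{(2-\epsilon)/4}$ by $k^{1/4}p^{(1-\epsilon)/4}$, and the same casework then delivers $|S(E)| \gg \min\{p^2,\ |S|^{1/2}|E|\,k^{-1/2}p^{-(1-\epsilon)/2}\}$, which is the second claim.

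The substantive content has already been isolated in Theorem~\ref{thm: 4.12}: the $p^{-\epsilon}$-saving over the trivial third-moment bound on $\Ebf(S,S)$ comes from the Bourgain--Gamburd spectral-gap input packaged in Corollary~\ref{co:36}, together with the dot-product energy of Lemma~\ref{lmmay3}. Consequently the only care needed in the present deduction is bookkeeping: confirming that the hypotheses on $S$ transfer unchanged, that $P$ excludes the origin so Theorem~\ref{thm: 4.12} legitimately applies, and that the two-term casework is organized so the $\min$ emerges cleanly. I do not expect a serious obstacle here, since the incidence count $I(S(E)\times E, S) = |E||S|$ is an exact identity and the remaining manipulation is elementary; the difficulty of the theorem resides entirely in the incidence estimate it invokes.
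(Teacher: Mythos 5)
Your proposal is correct and coincides with the paper's own proof: the authors state that Theorem \ref{thm5} is obtained by repeating the argument of Proposition \ref{thm2} with Theorem \ref{thm: 4.12} substituted for Theorem \ref{incidence}, which is exactly the identity $I(S(E)\times E,S)=|E||S|$ combined with the two-term casework you carry out. The bookkeeping (origin excluded from $P$, hypotheses on $S$ transferring verbatim, and the exponent arithmetic in both the unrestricted and line-restricted cases) all checks out.
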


To prove Theorem \ref{thm1}(a), we recall the following result from \cite{L.P.V.18}. 

\begin{lemma}[Corollary 10, \cite{L.P.V.18}]\label{exist}
    Let $E\subset \mathbb{F}_p^2$ with $|E|\ge 4p$. Then there exists a point $x\in E$ such that there are at least $p/2$ lines passing through $x$ and each line contains at least one other point from $E$. 
\end{lemma}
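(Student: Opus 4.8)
The plan is to reformulate the statement in terms of directions and then prove it by a double count organized by parallel classes of lines rather than by the lines through individual points. For a point $x\in E$, write $d(x)$ for the number of lines through $x$ that contain at least one further point of $E$; equivalently, $d(x)$ is the number of distinct directions from $x$ to the points of $E\setminus\{x\}$. Since there are exactly $p+1$ directions in $\mathbb{F}_p^2$, the claim is precisely that some $x\in E$ satisfies $d(x)\ge p/2$. I would argue by contradiction, assuming $d(x)<p/2$ for every $x\in E$, and bound the total count $T:=\sum_{x\in E}d(x)$ from two sides.

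From the point side, the assumption gives immediately $T<|E|\,p/2$. From the direction side, I would fix one of the $p+1$ directions $\delta$; the $\delta$-lines form a parallel class partitioning $\mathbb{F}_p^2$ into exactly $p$ lines, among which $E$ is distributed with multiplicities $m_1,\dots,m_p$. A point $x$ contributes the direction $\delta$ to $d(x)$ exactly when the $\delta$-line through $x$ carries a second point of $E$, so the number of such points in direction $\delta$ equals $\sum_{i:\,m_i\ge 2}m_i=|E|-t_\delta$, where $t_\delta$ counts the $\delta$-lines meeting $E$ in exactly one point. Summing over all directions yields the exact identity $T=(p+1)|E|-\sum_\delta t_\delta$.

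Comparing the two expressions, the assumption forces $\sum_\delta t_\delta>(p+1)|E|-|E|p/2=|E|(p/2+1)$. The crucial point is the trivial but sharp upper bound $t_\delta\le p$ (each parallel class has only $p$ lines), so $\sum_\delta t_\delta\le p(p+1)=p^2+p$. Feeding in $|E|\ge 4p$ gives $p^2+p\ge\sum_\delta t_\delta>4p(p/2+1)=2p^2+4p$, a contradiction for every prime $p$. Hence some $x\in E$ has $d(x)\ge p/2$, which is the assertion.

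I expect the only real obstacle to be conceptual rather than computational: the ``obvious'' double counts --- summing over the lines through a fixed point, or over the rich lines determined by $E$ weighted by $\binom{n_\ell}{2}$ --- are far too lossy, since a configuration made of a few nearly full lines can absorb all of the $\binom{|E|}{2}$ collinear pairs while keeping $\sum_\ell n_\ell$ as small as $\sim|E|^2/p$, which never contradicts $|E|\ge 4p$ (it would only bite once $|E|\gtrsim p^2$). The key insight is to organize the count by parallel class, where the rigid cap $t_\delta\le p$ on singleton lines per direction is exactly what upgrades the bound to order $p^2$ and lets it beat the term $|E|\,p/2$.
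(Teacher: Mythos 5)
Your argument is correct. Note first that the paper does not prove this lemma at all: it is imported verbatim as Corollary 10 of the cited reference \cite{L.P.V.18} and used as a black box, so there is no internal proof to compare against. Your proof is a clean, self-contained alternative. Checking the details: there are exactly $p+1$ directions, each parallel class partitions $\mathbb{F}_p^2$ into $p$ lines, and for a fixed direction $\delta$ the identity $\sum_{i:\,m_i\ge 2} m_i=|E|-t_\delta$ is right because the singleton lines account for exactly $t_\delta$ points of $E$; summing gives $T=(p+1)|E|-\sum_\delta t_\delta$ exactly, and the cap $t_\delta\le p$ yields $\sum_\delta t_\delta\le p(p+1)$, so the assumption $d(x)<p/2$ for all $x$ forces $p(p+1)>|E|\bigl(\tfrac{p}{2}+1\bigr)$, which already fails once $|E|\ge 2p$ — your argument in fact proves the statement under the weaker hypothesis $|E|\ge 2p(p+1)/(p+2)$, so $|E|\ge 4p$ is comfortable overkill. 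Your closing remark about why the na\"ive double counts (over rich lines weighted by $\binom{n_\ell}{2}$, or over pairs) are too lossy is also accurate: the organizing principle of bounding the number of \emph{singleton} lines per parallel class by $p$ is what makes the count close at the linear threshold $|E|\gtrsim p$ rather than at $|E|\gtrsim p^2$.
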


\begin{proof}[Proof of Theorem \ref{thm1}]\,
\medskip

{\bf Part a.}
    By Lemma \ref{exist}, there exists a point $x\in E$ such that there are at least $p/2$ lines passing through $x$ and each line contains at least one other point from $E$. From each of these lines, we pick one point which is different from $x$ and let $E'$ be the set of those points. Then we have $|E'|\ge p/2$. We observe that 
    \[|S(E-x)|\ge |S(E'-x)|.\]
    Note that $E'-x$ is a translation of $E'$ by $x$. So, any line passing through the origin contains at most one point from $E'-x$. Set $E''=E'-x$. We now estimate the size of $S(E'')$ from below. 

Set $P=E''\times S(E'')$ and $P'=\{\lambda\cdot u\colon u\in P, \lambda\ne 0\}$. We have $|P'|=(p-1)|P|$.
    Note that $I(P, S)=|E''||S|=\frac{1}{p-1}I(P', S)$. 
By Theorem \ref{incidence}, we have     
\[I(P', S)\ll \frac{|P'||S|}{p^2}+p\sqrt{|P'||S|} + \lv S\rv .\]
Putting the upper and lower bounds together, the theorem follows. 

{\bf Part b.} The proof is almost the same, we just need to partition the set $E$ into at most $k$ subsets $E_i$ such that each has the same structure as the set $E''$ in the Part a. So we omit the details.
\end{proof}

Theorem \ref{thm5}, Theorem \ref{thm345}, and Theorem \ref{thm3'} are proved by following the proof of Theorem \ref{thm2}, but we use Theorems \ref{thm: 4.12} and \ref{incidence-energy'} (1) in place of Theorem \ref{incidence}.

\begin{remark}
Theorem \ref{incidence-energy'} (2) implies the following bound
     \[ \lv S(E)\rv \gtrsim \min \lo \frac{\lv E\rv^{0.337} \lv S\rv^{\frac{1}{2}}p^{\epsilon /2}}{k_1^{\frac{2}{15}}}, \frac{\lv E\rv \lv S\rv^{\frac{1}{2}}p^{\epsilon /2}}{k_1}, \frac{\lv E\rv^2}{k_1}  \ro,\]
     which improves Theorem \ref{thm3'} in the range $|E|\le \min \lo p^{\frac{8}{15}} , k_1^{0.553}k_2^{0.755} \ro$. 
\end{remark}

\section{Incidence structures spanned by \texorpdfstring{$\mathbb{H}_1(\mathbb{F}_p)$}{H1(Fp)}}
As in the case of the special linear group, we define an incidence structure as follows. We say the point $(x, y)\in \mathbb{F}_p^3\times \mathbb{F}_p^3$ is incident to the matrix $\theta\in \mathbb{H}_1(\Fbb_p)$ if $\theta y=x$. 

\begin{theorem}\label{H-inci}Let $\epsilon\ge 0$.
    Let $X\subset \mathbb{H}_1(\Fbb_p)$ and $P=A\times B\subset \mathbb{F}_p^6$. Assume that all points in $A$ and $B$ have the third coordinate non-zero, and for each $(y, z)\in \mathbb{F}_p^2$, we have $|\pi_{23}^{-1}(y, z)\cap B|\le p^{1-\epsilon}$, then the number of incidences between $X$ and $P$ satisfies
    \[\left\vert I(P, X)- \frac{|P||X|}{p^3} \right\vert\le p^{\frac{3-\epsilon}{2}}|P|^{\frac{1}{2}}|X|^{\frac{1}{2}}.\]
\end{theorem}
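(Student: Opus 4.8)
**

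The plan is to mimic the Fourier-analytic proof of Theorem \ref{incidence} (the $SL_2$ case), replacing the two-dimensional abelian Fourier transform with the three-dimensional one on $\Fbb_p^3$, and using the transitivity/multiplicity structure of $\mathbb{H}_1(\Fbb_p)$ acting on appropriate fibers in place of Lemma \ref{lmmay1}. First I would expand the indicator $1_{\theta y = x}$ via Fourier inversion on $\Fbb_p^3$:
\[
I(P,X) = \sum_{\substack{(x,y)\in P \\ \theta\in X}} 1_{\theta y = x}
= \frac{1}{p^3}\sum_{m\in\Fbb_p^3}\sum_{\substack{(x,y)\in P\\ \theta\in X}}\chi\bigl(m\cdot(x-\theta y)\bigr).
\]
The $m=0$ term contributes the main term $|P||X|/p^3$, so the task is to bound the contribution of $m\neq 0$. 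Writing this in terms of $\widehat{P}$ as in the $SL_2$ proof, I would peel off $m=(0,0,0)$ and apply Cauchy--Schwarz in $\theta$, extending the sum over $\theta$ from $X$ to all of $\mathbb{H}_1(\Fbb_p)$ to decouple the matrix variable.

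The heart of the estimate is the resulting diagonal sum $\sum_{\theta\in\mathbb{H}_1}\sum_{m_1,m_2\neq 0}\widehat{P}(-m_1,\theta^{t}m_1)\overline{\widehat{P}(-m_2,\theta^{t}m_2)}$, which after orthogonality collapses to counting solutions of $\theta y_1 = x_1$, $\theta y_2 = x_2$ with $(x_i,y_i)\in P$. The key structural input I would prove is the Heisenberg analog of Lemmas \ref{lmmay1}--\ref{lmmay2}: given two pairs $(x_1,x_2)$ and $(y_1,y_2)$ in $\Fbb_p^3$ with nonzero third coordinates, count the $\theta\in\mathbb{H}_1(\Fbb_p)$ with $\theta y_i = x_i$. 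Since $\dim\mathbb{H}_1 = 3$ and each equation $\theta y = x$ imposes (generically) constraints on the three parameters $(a,b,c)$ of $[a,b,c]$, a single incidence $\theta y = x$ should have $\sim p$ solutions (one-parameter freedom), matching Observation 3's heuristic that covering $\Fbb_p^3$ costs a factor of $p$. The role of the hypothesis that third coordinates are nonzero is exactly to make the relevant linear system solvable with controlled multiplicity.

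The step I expect to be the main obstacle is controlling the off-diagonal count and extracting the factor $p^{1-\epsilon}$ from the fiber hypothesis $|\pi_{23}^{-1}(y,z)\cap B|\le p^{1-\epsilon}$. Concretely, when two incidences $\theta y_1 = x_1$ and $\theta y_2 = x_2$ are imposed, the multiplication law $[x,y,t][x',y',t'] = (x+x',\,y+y',\,t+t'+\tfrac{xy'-yx'}{2})$ shows that fixing the action of $\theta$ on the first two coordinates already pins down $\theta$ up to the central parameter, and the second coordinate $y$ of the input determines how the center transports. The upper bound $p^{1-\epsilon}$ on each $\pi_{23}$-fiber is what prevents the $B$-points from piling up along a single central fiber and inflating the energy; I would show the diagonal contribution is at most $\sim p^{-3}\cdot p^{1-\epsilon}|A||B| = p^{-2-\epsilon}|P|$ up to the main term, so that after Cauchy--Schwarz and taking square roots the error becomes $p^{(3-\epsilon)/2}|P|^{1/2}|X|^{1/2}$. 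Finally I would verify, exactly as in the $SL_2$ argument, that the two mixed terms (one of $m_1,m_2$ equal to zero) and the subtracted $m_1=m_2=0$ term cancel against the diagonal of the main term, leaving only the claimed error; this bookkeeping is routine once the central multiplicity lemma is in hand.
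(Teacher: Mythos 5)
Your skeleton matches the paper's proof exactly: Fourier inversion on $\Fbb_p^3$, Cauchy--Schwarz in $\theta$ extended from $X$ to all of $\mathbb{H}_1(\Fbb_p)$, and reduction of the resulting diagonal term to counting pairs of simultaneous incidences $\theta y_1=x_1$, $\theta y_2=x_2$ with $(x_i,y_i)\in P$. Your multiplicity analysis is also on the right track: for $\theta=[a,b,c]$ and $y=(u,v,w)$ with $w\neq 0$ a single incidence has multiplicity $p$, and for a double incidence the paper's Lemma \ref{lm23} shows the multiplicity is $1$ when $vz-wy\neq 0$ and $p$ (or $0$) in the degenerate case $vz=wy$; the degenerate quadruples are handled separately (Proposition \ref{propo2.2}: at most $p|A||B|$ of them, contributing $p^{2}|A||B|$).

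However, the heart of the proof is missing from your proposal. Once the multiplicity lemma is in place, what remains is to bound the number of quadruples $((x,y,z),(x',y',z'),(u,v,w),(u',v',w'))$ satisfying $yw'+zv'=y'w+vz'$ together with $z=z'$ and $w=w'$ --- the Heisenberg analog of the skew dot-product energy of Lemma \ref{lmmay3}. This is not routine: the paper proves it (Proposition \ref{propo2.1}) by slicing $A$ and $B$ according to the last coordinate, projecting by $\pi_{23}$, and applying a \emph{weighted} bilinear estimate (Lemma \ref{lm3.3}) to the fiber-counting functions $f,g$; the hypothesis $|\pi_{23}^{-1}(y,z)\cap B|\le p^{1-\epsilon}$ enters precisely as $\max_x g(x)\le p^{1-\epsilon}$ in the resulting $L^2$ term, yielding $N\le |A|^2|B|^2/p + p^{3-\epsilon}|A||B|$. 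Your proposal asserts the role of the fiber hypothesis only qualitatively and never formulates or proves this count. Moreover, the one quantitative claim you commit to --- that the relevant diagonal contribution is $\sim p^{-2-\epsilon}|P|$ --- is too weak by a factor of $p$: after the $p^{-6}$ normalization coming from the Fourier transform, one needs the diagonal sum (minus the $m_i=0$ corrections) to be $\ll p^{-3-\epsilon}|P|$, since $p^{3}|X|^{1/2}\bigl(p^{-3-\epsilon}|P|\bigr)^{1/2}=p^{\frac{3-\epsilon}{2}}|P|^{1/2}|X|^{1/2}$; your figure would only give $p^{2-\epsilon/2}|P|^{1/2}|X|^{1/2}$, which does not match the stated theorem.
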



We now turn our attention to proving Theorem \ref{H-inci}.
\begin{proof}[Proof of Theorem \ref{H-inci}]
By repeating the argument as in the case of $SL_2(\mathbb{F}_p)$, we have 
    \begin{align*}
        I(P,X) & = \sum_{\substack{(x,y )\in P \\ \theta \in X}} 1_{\theta x=y } =\frac{1}{p^3} \sum_{m\in \Fbb_p^3} \sum_{\substack{(x,y)\in P, \\ \theta \in X}} \chi \l( m\cdot \l( \theta y-x \r) \r) \\
        & = \frac{\lv P\rv \lv X\rv}{p^3} + \frac{1}{p^3} \sum_{ m\in \Fbb_p^3 \setminus \lo (0,0, 0)\ro } \sum_{\substack{ (x,y)\in P ,\\ \theta \in X }} \chi \l( m \cdot \l( \theta y -x \r) \r) \\
        & = \frac{\lv P\rv \lv X\rv}{p^3} + p^3\sum_{m\ne (0, 0, 0)} \sum_{\theta \in X} \widehat{P} \l( -m,\theta^t m\r).
    \end{align*}
By using the Cauchy--Schwarz inequality, one has 
\begin{align*}
    &\sum_{\theta\in X}\sum_{m\ne (0, 0, 0)}\widehat{P}(-m, \theta^tm)\le |X|^{\frac{1}{2}}\left(\sum_{\theta\in \mathbb{H}_1(\Fbb_p)}\sum_{m_1, m_2\ne (0, 0, 0)}\widehat{P}(-m_1, \theta^tm_1)\overline{\widehat{P}(-m_2, \theta^tm_2)}\right)^{\frac{1}{2}}.
\end{align*}
We now observe 
\begin{align*}
    &\sum_{\theta\in \mathbb{H}_1(\Fbb_p)}\sum_{m_1, m_2\ne (0, 0, 0)}\widehat{P}(-m_1, \theta^tm_1)\overline{\widehat{P}(-m_2, \theta^tm_2)}\\&=\frac{1}{p^{12}}\sum_{\theta}\sum_{m_1, m_2\ne (0, 0, 0)}\sum_{(x_1, y_1), (x_2, y_2)}P(x_1, y_1)P(x_2, y_2)\chi(-m_1x_1+\theta^tm_1y_1)\chi(m_2x_2-\theta^tm_2y_2)\\
    &=\sum_{m_1, m_2}-\sum_{m_1=(0, 0, 0), m_2\ne (0, 0, 0)}-\sum_{m_1\ne (0, 0, 0), m_2=(0, 0, 0)}-\sum_{m_1=m_2=(0, 0, 0)}\\
    &=I-II-III-IV.
\end{align*}
We now estimate each term separately. 

Regarding $IV$, it is clear that it is equal to $|P|^2/p^9$. The terms $II$ and $III$ are the same, and we can see that 
\[II=\frac{|P|}{p^{12}}\sum_{\theta}\sum_{m_2\ne (0, 0, 0)}\sum_{(x_2, y_2)}P(x_2, y_2)\chi(m_2(x_2-\theta y_2)),\]
which can be written as 
\[\frac{|P|(p^2-1)}{p^{12}}\sum_{\theta}\sum_{(x_2, y_2)\in P, x_2=\theta y_2} 1-\frac{|P|}{p^{12}}\sum_{\theta}\sum_{(x_2, y_2)\in P, x_2\ne\theta y_2}1.\]
So, $-II\le \frac{|P|^2}{p^9}$.
Regarding $I$,
\begin{align*}
    &I=\frac{1}{p^{12}}\sum_{\theta}\sum_{m_1, m_2\in \mathbb{F}_p^3}\sum_{(x_1, y_1), (x_2, y_2)}P(x_1, y_1)P(x_2, y_2)\chi(m_1(\theta y_1-x_1))\chi(m_2(\theta y_2-x_2))\\
    &=\frac{1}{p^6}\sum_{\theta}\sum_{(x_1, y_1), (x_2, y_2)}P(x_1, y_1)P(x_2, y_2)1_{\theta y_1=x_1}1_{\theta y_2=x_2}.\\
\end{align*}
To proceed further, we need to count the number of quadruples $(x, y, z)\in B, (x', y', z')\in A, (u, v, w)\in B, (u', v', w')\in A$ such that there exists $\theta=[a, b, c]\in \mathbb{H}_1(\Fbb_p)$ and $\theta (x, y, z)=(x', y', z')$, ~$\theta(u, v, w)=(u', v', w')$.

For such quadruples, one has 
\[yw'+zv'=y'w+vz', ~z(u'-u)+w(x-x')=a(vz-wy), ~z=z', w=w'.\]
From here, as in the case of the special linear group, we need to make use of a number of preliminary lemmas, which will be proved later.
\begin{proposition}\label{propo2.1}
Given $A, B\subset \mathbb{F}_p^3$, let $N$ be the number of quadruples $(x, y, z)\in A, (x', y', z')\in B, (u, v, w)\in A, (u', v', w')\in B$ such that 
\[yw'+zv'=y'w+vz', ~z=z', w=w'.\]
Suppose in addition that for each $(y, z)\in \mathbb{F}_p^2$, we have $\pi_{23}^{-1}(y, z)\cap A$ or $\pi_{23}^{-1}(y, z)\cap B$ are of sizes at most $p^{1-\epsilon}$, then we have 
\[N\le \frac{|A|^2|B|^2}{p}+p^{3-2\epsilon}|A||B|.\]
\end{proposition}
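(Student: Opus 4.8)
The plan is to mirror the Fourier-analytic proof of Theorem \ref{incidence}, but at the level of the second and third coordinates, treating $A$ and $B$ as weighted point sets in the $(y,z)$-plane via their fibers. Since in the application (Theorem \ref{H-inci}) every point of $A$ and $B$ has nonzero third coordinate, I may assume $z,z',w,w'\neq 0$; this is exactly what makes the degenerate contributions (where $z=0$ or $w=0$) vanish. Writing $a(y,z)=|\pi_{23}^{-1}(y,z)\cap A|$ and $b(y,z)=|\pi_{23}^{-1}(y,z)\cap B|$ for the fiber sizes, I note that the first coordinates $x,x',u,u'$ are unconstrained and contribute only these multiplicities. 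Using $z=z'$ and $w=w'$, the defining relation $yw'+zv'=y'w+vz'$ simplifies to $w(y-y')=z(v-v')$, so that
\[ N=\sum_{z,w\neq0}\ \sum_{y,y',v,v'}a(y,z)\,b(y',z)\,a(v,w)\,b(v',w)\,\mathbf 1\!\left[\,w(y-y')=z(v-v')\,\right]. \]

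Next I would expand the indicator via $\mathbf 1[\,\cdot=0\,]=\frac1p\sum_{t}\chi\big(t(w(y-y')-z(v-v'))\big)$ and separate the four inner sums. Introducing, for each fixed slice $z$, the one-variable (unnormalized) Fourier transform $\widehat a_z(\xi)=\sum_y a(y,z)\chi(\xi y)$ (and likewise $\widehat b_z$), this produces the exact identity
\[ N=\frac1p\sum_{t}\sum_{z,w\neq0}\widehat a_z(tw)\,\overline{\widehat b_z(tw)}\;\overline{\widehat a_w(tz)}\,\widehat b_w(tz). \]
The term $t=0$ is the main term: there every transform is evaluated at $0$, giving $\frac1p\big(\sum_{z}A_zB_z\big)^2$ with $A_z=\sum_y a(y,z)$ and $B_z=\sum_y b(y,z)$; since $\sum_z A_zB_z\le(\sum_z A_z)(\sum_z B_z)=|A||B|$ by nonnegativity, this is at most $|A|^2|B|^2/p$.

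The heart of the argument is the bound for the $t\neq0$ part, where one must avoid losing a factor of $p$. Writing $u_z(\xi)=|\widehat a_z(\xi)|\,|\widehat b_z(\xi)|$ and taking absolute values, the key observation is that $(t,z,w)\mapsto(z,w,\xi,\eta)=(z,w,tw,tz)$ is a bijection from $\{t,z,w\neq0\}$ onto $\{z,w,\xi,\eta\neq0:\ \xi z=\eta w\}$, with $t=\xi/w=\eta/z$ recovered uniquely. Hence the $t\neq0$ part is at most $\frac1p\sum_{\xi z=\eta w}u_z(\xi)u_w(\eta)$, and grouping by the common value $\lambda=\xi z=\eta w\neq0$ turns this into $\frac1p\sum_{\lambda\neq0}\Phi(\lambda)^2$ with $\Phi(\lambda)=\sum_{z\neq0}u_z(\lambda/z)$. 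A Cauchy--Schwarz in $\lambda$ reduces matters to $\big(\sum_z\sqrt{V(z)}\big)^2$ where $V(z)=\sum_\xi u_z(\xi)^2\le\big(\sum_\xi|\widehat a_z(\xi)|^2\big)\big(\sum_\xi|\widehat b_z(\xi)|^2\big)$. At this last step I would invoke Plancherel, $\sum_\xi|\widehat a_z(\xi)|^2=p\sum_y a(y,z)^2$, together with the fiber hypothesis $a(y,z),b(y,z)\le p^{1-\epsilon}$, which gives $\sum_y a(y,z)^2\le p^{1-\epsilon}A_z$ and hence $V(z)\le p^{4-2\epsilon}A_zB_z$. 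One more Cauchy--Schwarz, $\sum_z\sqrt{A_zB_z}\le\sqrt{|A||B|}$, yields $\frac1p\cdot p^{4-2\epsilon}|A||B|=p^{3-2\epsilon}|A||B|$, completing the estimate.

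The main obstacle is precisely this error term: a naive application of Cauchy--Schwarz that decouples the frequency $tw$ from $tz$ (for instance bounding a cross sum $\sum XY$ by $\sum X^2$ and then summing $w$ out freely) loses a factor of $p$ and only produces $p^{4-2\epsilon}|A||B|$. The saving comes from retaining the symmetric coupling $\xi z=\eta w$ between the two slices \emph{before} applying Cauchy--Schwarz, and it is exactly the fiber bound $p^{1-\epsilon}$ that, fed through Plancherel, supplies the two factors of $p^{1-\epsilon}$ in $V(z)$. The degenerate ranges $z=0$ or $w=0$ cause no trouble under the nonzero--third--coordinate assumption inherited from Theorem \ref{H-inci}, since there the relevant fiber weights vanish.
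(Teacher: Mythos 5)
Your proof is correct, but it takes a genuinely different route from the paper's. The paper slices $A$ and $B$ by the third coordinate, projectivizes each slice pair into weighted subsets of $\mathbb{F}_p^4$, and invokes a weighted $L^2$ dot-product (hyperplane) incidence estimate (Lemma \ref{lm3.3}, a weighted form of the Hart--Iosevich--Koh--Rudnev bound from \cite{Hetal}) to control each $N_{\lambda,\beta}$, finally summing over slice pairs with Cauchy--Schwarz; the fiber hypothesis enters there through $\sum_x f(x)^2\le p^{1-\epsilon}|A|$, exactly as in your bound for $V(z)$. You instead expand the single linear constraint $w(y-y')=z(v-v')$ directly with one additive character, and the reparametrization $(t,z,w)\mapsto (z,w,tw,tz)$ onto $\{\xi z=\eta w\}$ lets you write the off-diagonal part as $\frac1p\sum_{\lambda\ne 0}\Phi(\lambda)^2$ and finish with Minkowski's inequality, Plancherel, and Cauchy--Schwarz; each step checks out and yields the same main term and the same error $p^{3-2\epsilon}|A||B|$. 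Your version is self-contained (no external incidence lemma, no projective thickening) and, usefully, makes explicit that the nonvanishing of the third coordinates must be assumed: this hypothesis is absent from the literal statement of the proposition but is inherited from Theorem \ref{H-inci}, and it is genuinely necessary --- if $A$ and $B$ both lie in $\{z=0\}$ with $p$ full fibers of size $p^{1-\epsilon}$, then every quadruple satisfies the relation, so $N=|A|^2|B|^2=p^{8-4\epsilon}$, which exceeds the claimed bound of order $p^{7-4\epsilon}$.
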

\begin{proposition}\label{propo2.2}
    Given $A, B\subset \mathbb{F}_p^3$, let $N'$ be the number of quadruples $(x, y, z)\in B, (x', y', z')\in A, (u, v, w)\in B, (u', v', w')\in A$ such that 
\[yw'+zv'=y'w+vz', ~vz-wy=0, ~zu'+xw'-z'u-x'w=0, ~w=w',~z=z'.\]
Then $N'\le p|A||B|$.
\end{proposition}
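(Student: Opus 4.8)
The plan is to show that every quadruple counted by $N'$ is completely determined by the reduced data $\bigl(P_1, P_4, u\bigr)$, where $P_1=(x,y,z)\in B$, $P_4=(u',v',w')\in A$, and $u\in \mathbb{F}_p$ is the first coordinate of the third point $P_3=(u,v,w)\in B$. Since there are at most $|B|$ choices for $P_1$, at most $|A|$ choices for $P_4$, and at most $p$ choices for the scalar $u$, this reconstruction immediately yields $N'\le p|A||B|$. The membership conditions $P_3\in B$ and $P_2=(x',y',z')\in A$ are then only used at the end, where they can only decrease the count.

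First I would use the two defining equations $w=w'$ and $z=z'$ to eliminate the third coordinates of $P_2$ and $P_4$ as independent unknowns: once $P_1$ and $P_4$ are fixed, $z'=z$ and $w=w'$ are forced. The degeneracy relation $vz-wy=0$ then pins down the second coordinate of $P_3$, namely $v=wy/z$, using that $z\neq 0$. Here I invoke the standing hypothesis of Theorem \ref{H-inci} that all points of $A$ and $B$ have nonzero third coordinate, which makes this division legitimate; thus $P_3=(u,v,w)$ is determined by $P_1$, $P_4$ and the single free scalar $u$.

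Next I would recover the remaining point $P_2=(x',y',z')$. With $w'=w$ and $z'=z$ substituted, the two cross equations become linear and triangular in the unknowns $x',y'$: the relation $yw'+zv'=y'w+vz'$ solves for $y'=y+z(v'-v)/w$, and the relation $zu'+xw'-z'u-x'w=0$ solves for $x'=x+z(u'-u)/w$, both using $w\neq 0$. Hence the entire quadruple $(P_1,P_2,P_3,P_4)$ is reconstructed from $(P_1,P_4,u)$, and since the assignment quadruple $\mapsto (P_1,P_4,u)$ is injective on valid configurations, the count is bounded by $|B|\cdot|A|\cdot p$, as desired.

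The only genuine subtlety is the reliance on nonzero third coordinates to justify the divisions by $z$ and $w$; this is not a technicality that can be dropped, since if every point of $A$ and $B$ lay in $\mathbb{F}_p^2\times\{0\}$ then all five relations would hold identically and $N'$ would equal $|A|^2|B|^2$. Under the hypothesis inherited from Theorem \ref{H-inci}, however, this degeneracy is excluded, and the triangular elimination above goes through without further case analysis. I expect the bookkeeping of which coordinate to treat as the single free parameter (here $u$) to be the step requiring the most care, but no nontrivial estimate or external result is needed beyond it.
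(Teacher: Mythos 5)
Your proof is correct and follows essentially the same route as the paper's: fix $(x,y,z)\in B$, $(u',v',w')\in A$, and the free scalar $u$; the five relations then determine $w$, $z'$, $v$, $y'$, and $x'$ uniquely, giving $N'\le p|A||B|$. Your explicit justification of the divisions by $z$ and $w$ via the nonzero-third-coordinate hypothesis inherited from Theorem \ref{H-inci} is a detail the paper's one-line proof leaves implicit, and you are right that the bound fails without it.
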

\begin{lemma}\label{lm23}
    Given $(x, y, z), (x', y', z'), (u, v, w), (u', v', w')$ in $\mathbb{F}_p^3$ with $w\ne 0$, $z\ne 0$, and $yw'+zv'=y'w+z'v$. Let $N''$ be the number of matrices $\theta=[a, b, c]\in \mathbb{H}_1(\Fbb_p)$ such that 
\[\theta(x, y, z)=(x', y', z')\]
and 
\[\theta(u, v, w)=(u', v', w').\]
\begin{itemize}
    \item If $vz-wy\ne 0$, then $N''=1$. 
    \item If $vz-wy=0$ and $z(u'-u)=w(x-x')$, then $N''=p$.

\item If $vz-wy=0$ and $z(u'-u)\ne w(x-x')$, then $N''=0$.
\end{itemize}
\end{lemma}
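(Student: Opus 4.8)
The plan is to treat the two conditions on $\theta=[a,b,c]$ as an explicit linear system in the unknowns $a,b,c$ and to read off $N''$ from the ranks of the relevant coefficient matrices. Expanding the matrix--vector products, $\theta(x,y,z)=(x',y',z')$ and $\theta(u,v,w)=(u',v',w')$ are equivalent to the six scalar equations
\[ x+ay+cz=x',\qquad y+bz=y',\qquad z=z', \]
\[ u+av+cw=u',\qquad v+bw=v',\qquad w=w'. \]
The two third-coordinate equations are exactly the standing hypotheses $z=z'$ and $w=w'$, so they impose nothing new, and the system decouples into one equation for $b$ and a $2\times2$ system for $(a,c)$.

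First I would pin down $b$. The second-coordinate equations read $bz=y'-y$ and $bw=v'-v$; since $z\ne0$ and $w\ne0$, each determines $b$, and the two values coincide precisely when $w(y'-y)=z(v'-v)$. Substituting $z=z'$ and $w=w'$ shows this compatibility is exactly the hypothesis $yw'+zv'=y'w+z'v$. Thus $b$ is forced to a single value and never obstructs the count; noticing that the hypothesis is nothing but this compatibility relation is the one genuinely conceptual step.

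It then remains to count pairs $(a,c)$ solving $ay+cz=x'-x$ and $av+cw=u'-u$, a system whose coefficient matrix $\begin{pmatrix} y & z\\ v & w\end{pmatrix}$ has determinant $yw-zv=-(vz-wy)$. Eliminating $c$ (legitimate since $z\ne0$) collapses this to the single scalar relation $a(vz-wy)=z(u'-u)+w(x-x')$. If $vz-wy\ne0$ the matrix is invertible, $(a,c)$ is uniquely determined, and $N''=1$. If $vz-wy=0$ the coefficient of $a$ vanishes, so the system is solvable if and only if the right-hand side $z(u'-u)+w(x-x')$ vanishes as well: when it does, $a$ ranges freely over $\Fbb_p$ while $c$ is determined by each choice of $a$, giving $N''=p$, and when it does not the system is inconsistent and $N''=0$. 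These are precisely the three cases of the statement.

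I do not expect a genuine obstacle, since everything reduces to $2\times2$ linear algebra over $\Fbb_p$; the only care needed is to track which coordinate produces which equation and to use $z,w\ne0$ consistently, once to solve for $b$ and once to eliminate $c$. The rank-deficient case $vz-wy=0$ is the single place where a sign could slip, so I would recheck there that the consistency condition matches the scalar relation $z(u'-u)+w(x-x')=0$ exactly as written.
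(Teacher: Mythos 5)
Your computation is correct and follows essentially the same route as the paper's proof: read off $b$ from the second coordinates (the standing hypothesis $yw'+zv'=y'w+z'v$, combined with $z=z'$ and $w=w'$, is exactly the compatibility of the two determinations of $b$), then analyse the remaining $2\times 2$ system for $(a,c)$ through the single relation $a(vz-wy)=z(u'-u)+w(x-x')$. There is, however, one point you should not have waved through. The consistency condition you correctly derive in the degenerate case is $z(u'-u)+w(x-x')=0$, i.e.\ $z(u'-u)=w(x'-x)$, whereas the lemma as printed requires $z(u'-u)=w(x-x')$; these differ by a sign and are not equivalent in general. This is a sign typo in the statement rather than an error in your argument: the paper's own proof derives the identical equation $z(u'-u)+w(x-x')=a(vz-wy)$ and then restates the case condition with the flipped sign, while Proposition~\ref{propo2.2}, which handles exactly this degenerate case in the incidence count, uses the condition $zu'+xw'-z'u-x'w=0$, which is $z(u'-u)+w(x-x')=0$ after substituting $z=z'$, $w=w'$. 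So your final claim that your three cases are ``precisely the three cases of the statement'' is the one inaccuracy; you had in fact already identified the degenerate case as the place where a sign could slip, and the honest conclusion of your recheck would be that the statement's condition should read $z(u'-u)=w(x'-x)$.
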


So, combining Propositions \ref{propo2.1}, \ref{propo2.2}, and Lemma \ref{lm23} gives
\[I\le \frac{|P|^2}{p^3}+p^{3-\epsilon}|P|.\]
In other words,
\[I-II-III-IV\ll \frac{1}{p^6}\cdot p^{3-\epsilon}|P|,\]
and 
 \[\left\vert I(P, X)- \frac{|P||X|}{p^3} \right\vert\le p^{\frac{3-\epsilon}{2}}|P|^{\frac{1}{2}}|X|^{\frac{1}{2}}.\]
 This completes the proof.
\end{proof}

The rest of this section is devoted to prove Propositions \ref{propo2.1}, \ref{propo2.2}, and Lemma \ref{lm23}.

We first start with Proposition \ref{propo2.2} and Lemma \ref{lm23}, since they are straightforward. 

\begin{proof}[Proof of Proposition \ref{propo2.2}]
    By fixing $(x, y, z)\in A$ and $(u', v', w')\in B$, then $y'$ and $v$ are determined uniquely by 
    \[yw'+zv'=y'w+vz', ~vz-wy=0, ~zu'+xw'-z'u-x'w=0.\]
With each $u\in \mathbb{F}_p$, $x'$ is determined uniquely. Thus, $N'\le p|A||B|$.
\end{proof}
\begin{proof}[Proof of Lemma \ref{lm23}]
A direct computation shows that 
\[b=\frac{v'-v}{w}=\frac{y'-y}{z}.\]
Note that $z(u'-u)+w(x-x')=a(vz-wy)$. If $vz-wy\ne 0$, then 
\[a=\frac{z(u'-u)+w(x-x')}{vz-wy}.\]

For such $a$, $c$ is determined uniquely. 

If $vz-wy=0$ and $z(u'-u)=w(x-x')$, there are $p$ such matrices $\theta$. 

If $vz-wy=0$ and $z(u'-u)\ne w(x-x')$, there is no such matrix $\theta$. 
\end{proof}

We now move to the proof of Proposition \ref{propo2.1}.

By repeating the proof of \cite[Theorem 2.1]{Hetal}, the following weighted version can be obtained. 
\begin{lemma}\label{lm3.3}
    Let $U, V$ be two sets in $\mathbb{F}_p^4$, and $F\colon U\to \mathbb{N}$, $G\colon V\to \mathbb{N}$. 
   Define 
   \[M=\sum_{\substack{u\in U, v\in V\\ u\cdot v=0}}F(u)G(v).\]Then we have 
    \begin{align*}
    &\left\vert M-\frac{(\sum_{u\in U}F(u))(\sum_{v\in V}G(v))}{p}\right\vert\le  p^2\left(\sum_{u\in U}|F(u)|^2\right)^{\frac{1}{2}}\cdot \left(\sum_{v\in V}|G(v)|^2\right)^{\frac{1}{2}}.     
    \end{align*}
\end{lemma}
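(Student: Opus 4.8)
The plan is to prove this as a weighted character-sum estimate, exactly mirroring the Fourier-analytic arguments already used for Theorems \ref{incidence} and \ref{H-inci}. First I would extend $F$ and $G$ by zero to all of $\Fbb_p^4$, so that every sum over $u$ or $v$ may be taken over the full space without changing the $\ell^1$ or $\ell^2$ totals. The starting point is the orthogonality identity $\mathbf 1_{u\cdot v=0}=\frac1p\sum_{t\in\Fbb_p}\chi(t(u\cdot v))$, which recasts $M$ as
\[ M=\frac1p\sum_{t\in\Fbb_p}\sum_{u,v}F(u)G(v)\chi\bigl(t(u\cdot v)\bigr). \]
Isolating the $t=0$ contribution produces precisely the main term $\frac1p(\sum_uF(u))(\sum_vG(v))$, so that the quantity to be bounded is
\[ M-\frac{(\sum_uF(u))(\sum_vG(v))}{p}=\frac1p\sum_{t\neq0}S_t,\qquad S_t:=\sum_{u}F(u)\,\Phi_t(u),\quad \Phi_t(u):=\sum_vG(v)\chi\bigl(t(u\cdot v)\bigr). \]

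The second step is to control each $S_t$ for fixed $t\neq0$ by a single Cauchy--Schwarz in the variable $u$:
\[ |S_t|\le\Bigl(\sum_u|F(u)|^2\Bigr)^{1/2}\Bigl(\sum_u|\Phi_t(u)|^2\Bigr)^{1/2}. \]
Expanding $|\Phi_t(u)|^2$ and summing over $u$ first, the inner sum $\sum_{u\in\Fbb_p^4}\chi\bigl(tu\cdot(v-v')\bigr)$ vanishes unless $t(v-v')=0$; since $t\neq0$ this forces $v=v'$, and the orthogonality property then contributes exactly $p^4$. Hence $\sum_u|\Phi_t(u)|^2=p^4\sum_v|G(v)|^2$, giving $|S_t|\le p^2(\sum_u|F(u)|^2)^{1/2}(\sum_v|G(v)|^2)^{1/2}$ for every $t\neq0$. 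This is where the ambient dimension $4$ enters the bound: the factor $p^4$ becomes $p^2$ after the square root.

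Finally I would sum the bound on $|S_t|$ over the $p-1$ nonzero values of $t$ and divide by $p$, obtaining
\[ \left|M-\frac{(\sum_uF(u))(\sum_vG(v))}{p}\right|\le\frac{p-1}{p}\,p^2\left(\sum_u|F(u)|^2\right)^{1/2}\left(\sum_v|G(v)|^2\right)^{1/2}\le p^2\left(\sum_u|F(u)|^2\right)^{1/2}\left(\sum_v|G(v)|^2\right)^{1/2}, \]
which is the claim, the harmless factor $(p-1)/p<1$ being absorbed. The argument is entirely routine; the only point demanding a little care is the orthogonality computation for $\sum_u|\Phi_t(u)|^2$, where one must use $t\neq0$ to pass from $t(v-v')=0$ to $v=v'$, and make sure the weights $F,G$ are carried correctly through the Cauchy--Schwarz step. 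No genuine obstacle arises, which is consistent with the remark that this is merely the weighted analogue of \cite[Theorem 2.1]{Hetal}.
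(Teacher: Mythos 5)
Your proof is correct and is essentially the paper's intended argument: the paper gives no written proof of Lemma \ref{lm3.3}, deferring instead to ``repeating the proof of \cite[Theorem 2.1]{Hetal}'', and your character-sum expansion of $\mathbf 1_{u\cdot v=0}$, the term-by-term Cauchy--Schwarz in $u$ for each fixed $t\neq0$, and the orthogonality computation yielding $\sum_{u}|\Phi_t(u)|^2=p^4\sum_v|G(v)|^2$ are exactly the weighted version of that argument. The bookkeeping (extension by zero, the factor $(p-1)/p\le1$, and the passage from $t(v-v')=0$ to $v=v'$) is all handled correctly.
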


In our setting, we partition the set $A$ into $A_\lambda$ and the set $B$ into $B_\lambda$, for $\lambda\in \mathbb{F}_p$, such that any two points in $A_\lambda$ have the same last coordinate which is equal to $\lambda$, and the same applies to $B_\lambda$. 

Recall $\pi_{23}\colon \mathbb{F}_p^3\to \mathbb{F}_p^2$ be the projection onto the two last coordinates. 

For $A_\lambda\subset \mathbb{F}_p^3$, we define $\overline{A_\lambda}=\pi_{23}(A_\lambda)$ . For each $x\in \overline{A_\lambda}\subset \mathbb{F}_p^2$, define $f(x)=\pi_{23}^{-1}(x)\cap A_\lambda$.


For $B_\lambda\subset \mathbb{F}_p^3$, we define $\overline{B_\lambda}=\pi_{23}(B_\lambda)$ . For each $x\in \overline{B_\lambda}$, define $g(x)=\pi_{23}^{-1}(x)\cap B_\lambda$.

Let $N_{\lambda, \beta}$ be the number of quadruples $(a, a', b, b')\in \overline{A_\lambda}\times \overline{B_\lambda}\times\overline{A_\beta}\times \overline{B_\beta}$ such that $a\cdot b'=a'\cdot b$, counted with multiplicity, i.e. 
\[N_{\lambda, \beta}=\sum_{a, b, a', b'\colon a\cdot b'=a'\cdot b}f(a)g(a')f(b)g(b')\]

We want to bound $N_{\lambda, \beta}$ from above. The next result will be proved by using the above lemma.
\begin{lemma}\label{weight-H}
    For fixed $\lambda, \beta$, we have 
    \begin{align*}
         &N_{\lambda, \beta}\le \frac{1}{p}\l(\sum_{x\in \overline{A_\lambda}}f(x)\r) \l(\sum_{x\in \overline{A_\beta}}f(x)\r) \l(\sum_{y\in \overline{B_\lambda}}g(y)\r) \l(\sum_{y\in \overline{B_\beta}}g(y)\r)\\
         &+p\l(\sum_{x\in \overline{A_\lambda}}|f(x)|^{2}\r)^{\frac{1}{2}}\l(\sum_{x\in \overline{A_\beta}}|f(x)|^2\r)^{\frac{1}{2}}\l(\sum_{y\in \overline{B_\lambda}}|g(y)|^2\r)^{\frac{1}{2}}\l(\sum_{y\in \overline{B_\beta}}|g(y)|^2\r)^{\frac{1}{2}}.
    \end{align*}
\end{lemma}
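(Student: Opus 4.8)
The plan is to exploit the fact that, after the projection $\pi_{23}$, all four running points lie on affine lines with a frozen second coordinate, which collapses the bilinear constraint defining $N_{\lambda,\beta}$ to a single two–dimensional orthogonality relation. First I would record the structural observation that every point of $\overline{A_\lambda}$ and of $\overline{B_\lambda}$ has second coordinate equal to $\lambda$, while every point of $\overline{A_\beta}$ and of $\overline{B_\beta}$ has second coordinate equal to $\beta$. Writing $a=(a_1,\lambda)$, $a'=(a_1',\lambda)$, $b=(b_1,\beta)$, $b'=(b_1',\beta)$, the dot products become $a\cdot b'=a_1b_1'+\lambda\beta$ and $a'\cdot b=a_1'b_1+\lambda\beta$, so the defining relation $a\cdot b'=a'\cdot b$ is equivalent to the single equation $a_1b_1'=a_1'b_1$.

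Next I would encode this as an orthogonality condition in $\mathbb{F}_p^2$. Setting $u=(a_1,a_1')$ and $v=(b_1',-b_1)$, the relation $a_1b_1'=a_1'b_1$ reads $u\cdot v=0$. I would then define
\[ U=\{(a_1,a_1'):a\in\overline{A_\lambda},\,a'\in\overline{B_\lambda}\},\qquad V=\{(b_1',-b_1):b\in\overline{A_\beta},\,b'\in\overline{B_\beta}\}\subset\mathbb{F}_p^2, \]
with weights $F(u)=f(a)g(a')$ and $G(v)=f(b)g(b')$. These are well defined: since the second coordinate is constant on each of $\overline{A_\lambda},\overline{B_\lambda},\overline{A_\beta},\overline{B_\beta}$, a point is determined by its first coordinate, so the parametrizations $(a,a')\mapsto(a_1,a_1')$ and $(b,b')\mapsto(b_1',-b_1)$ are injective. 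With this set-up $N_{\lambda,\beta}=\sum_{u\cdot v=0}F(u)G(v)$.

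I would then apply the two–dimensional analogue of Lemma \ref{lm3.3}: its proof (Cauchy--Schwarz followed by orthogonality of additive characters) is insensitive to the ambient dimension, and when carried out over $\mathbb{F}_p^2$ rather than $\mathbb{F}_p^4$ it produces the factor $p=p^{2/2}$ in place of $p^2$. This yields
\[ \Big|\,N_{\lambda,\beta}-\frac1p\Big(\sum_{u\in U}F(u)\Big)\Big(\sum_{v\in V}G(v)\Big)\Big|\le p\Big(\sum_{u\in U}F(u)^2\Big)^{1/2}\Big(\sum_{v\in V}G(v)^2\Big)^{1/2}. \]
Finally I would factor the four sums, using that $F$ and $G$ are products: $\sum_U F=(\sum_{\overline{A_\lambda}}f)(\sum_{\overline{B_\lambda}}g)$ and $\sum_V G=(\sum_{\overline{A_\beta}}f)(\sum_{\overline{B_\beta}}g)$, with the analogous factorizations for the squared sums. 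Keeping only the upper bound $N_{\lambda,\beta}\le(\text{main term})+(\text{error term})$ reproduces the claimed inequality verbatim.

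The main obstacle, and indeed the whole point of the argument, is this dimension reduction. A naive reading treats $a\cdot b'=a'\cdot b$ as a four–dimensional orthogonality relation and invokes Lemma \ref{lm3.3} directly, which only produces the weaker factor $p^2$ and would miss the claimed bound by a full factor of $p$. Recognizing that the frozen second coordinates confine $U$ and $V$ to two–dimensional affine subspaces---so that the diagonalizing character sum effectively lives over $\mathbb{F}_p^2$ and contributes $p^2$ rather than $p^4$ after Cauchy--Schwarz---is precisely what sharpens the error factor to $p$. Everything else is the routine factorization of the product weights $F$ and $G$.
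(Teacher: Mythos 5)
Your argument is correct and reaches the stated bound, but it takes a genuinely different route from the paper's. You exploit the fact that $\overline{A_\lambda},\overline{B_\lambda}$ lie on the horizontal line with second coordinate $\lambda$ and $\overline{A_\beta},\overline{B_\beta}$ on the one with second coordinate $\beta$, so that $a\cdot b'=a'\cdot b$ collapses to the single equation $a_1b_1'=a_1'b_1$, i.e.\ an orthogonality relation between $U,V\subset\mathbb{F}_p^2$, to which you apply a two-dimensional version of Lemma \ref{lm3.3} with error factor $p=p^{2/2}$. The paper goes the opposite way: it stays in $\mathbb{F}_p^4$ but replaces each pair $(a,a')$ by its punctured dilation orbit $\{t(a,a')\colon t\ne 0\}$ (these orbits are pairwise disjoint precisely because $\lambda,\beta\ne 0$), so the count inflates by $(p-1)^2$ while each $L^2$ norm grows only by $(p-1)^{1/2}$; applying the four-dimensional Lemma \ref{lm3.3} as stated and dividing back by $(p-1)^2$ converts the factor $p^2$ into $p^2/(p-1)\sim p$. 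Both devices recover the same factor of $p$; yours makes the source of the gain more transparent and uses the structure of the sets directly, but obliges you to actually verify the two-dimensional analogue (routine: the character-sum argument gives $p^{d/2}$ in dimension $d$, as you assert), whereas the paper's homogenization uses the stated lemma as a black box. One small point to patch in your version: the vector $(a_1,a_1')$ can equal $(0,0)$ even though $a$ and $a'$ are nonzero points of $\mathbb{F}_p^2$, and the zero vector is orthogonal to everything, so its contribution must be handled separately; it is at most $F(0)\sum_{v\in V}G(v)\le \lV F\rV_2\, |V|^{1/2}\lV G\rV_2\le p\lV F\rV_2\lV G\rV_2$, so only an absolute constant is lost --- comparable to the paper's own proof, which likewise only achieves $p^2/(p-1)>p$ in place of the stated $p$.
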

\begin{proof}
To prove this lemma, we first enlarge the set $A_\lambda, A_\beta, B_\lambda$, and $B_\beta$. More precisely, define 
\[U=\{t(a, a')\colon a\in \overline{A_{\lambda}}, ~a'\in \overline{B_{\lambda}},~ t\ne 0\},\]
and
\[V=\{t(b, b')\colon b\in \overline{A_{\beta}}, ~b'\in \overline{B_{\beta}},~ t\ne 0\}.\]
Also define 
\[F(t(a, a'))=f(a)g(a'), ~~G(t(b, b'))=f(b)g(b')\]
for all $t(a, a')\in U$ and $t(b, b')\in V$.

If $(a, a', b, b')\in \overline{A_\lambda}\times \overline{B_\lambda}\times\overline{A_\beta}\times \overline{B_\beta}$ such that $a\cdot b'=a'\cdot b$, then 
\[(a, a')\cdot (-b', b)=0,\]
which implies
\[t(a, a')\cdot t'(-b', b)=0\]
for all $t, t'\ne 0$.

Therefore, 
\[N_{\lambda, \beta}=\frac{1}{(p-1)^2}M,\]
where 
\[M=\sum_{u\in U, v\in V, u\cdot v=0}F(u)G(v).\]
Applying Lemma \ref{lm3.3} gives us 
\begin{align*}
         &N_{\lambda, \beta}\le \frac{1}{p}\l(\sum_{x\in \overline{A_\lambda}}f(x)\r) \l(\sum_{x\in \overline{A_\beta}}f(x)\r) \l(\sum_{y\in \overline{B_\lambda}}g(y)\r) \l(\sum_{y\in \overline{B_\beta}}g(y)\r)\\
         &+p\l(\sum_{x\in \overline{A_\lambda}}|f(x)|^{2}\r)^{\frac{1}{2}}\l(\sum_{x\in \overline{A_\beta}}|f(x)|^2\r)^{\frac{1}{2}}\l(\sum_{y\in \overline{B_\lambda}}|g(y)|^2\r)^{\frac{1}{2}}\l(\sum_{y\in \overline{B_\beta}}|g(y)|^2\r)^{\frac{1}{2}}.
    \end{align*}
This completes the proof. 
\end{proof}
Using this lemma, we take the sum over all pairs $(\lambda, \beta)$, and obtain 
\[N\le \frac{|A|^2|B|^2}{p}+p \left(\sum_{x\in \mathbb{F}_p^2}f(x)^2\right)\cdot \left(\sum_{y \in \mathbb{F}_p^2}g(y)^2\right),\]
by using the Cauchy--Schwarz inequality. 

The trivial upper bound that $f(x), g(y)\le p$ gives 
\[N\le \frac{|A|^2|B|^2}{p}+p^3|A||B|.\]
If we assume in addition that either $\max_{x}f(x)\le p^{1-\epsilon}$ or $\max_{x}g(x)\le p^{1-\epsilon}$, then 
\[N\le \frac{|A|^2|B|^2}{p}+p^{3-\epsilon}|A||B|.\]
This completes the proof. $\square$

\section{Proof of Theorem \ref{main-H}}
With Theorem \ref{H-inci} in hand, one can prove Theorem \ref{main-H} easily. To see this, we set $B=E$ and $A=X(E)$. We now estimate the number of incidences between $A\times B$ and $X$. It is clear that $I(A\times B, X)=|X||B|$. Applying Theorem \ref{H-inci} on the number of incidences between $A\times B$ and $X$, we obtain 
\[I(A\times B, X)\le \frac{|A||B||X|}{p^3}+p^{\frac{3-\epsilon}{2}}|X|^{\frac{1}{2}}|A|^{\frac{1}{2}}|B|^{\frac{1}{2}}.\]
Putting the lower and upper bounds together and solving the inequality give us the desired bound. 

\section{Acknowledgements}
Thang Pham would like to thank the Vietnam Institute for Advanced Study in Mathematics (VIASM) for the hospitality and for the excellent working condition.

Norbert Hegyv\'{a}ri was supported by the National Research, Development and Innovation Office NKFIH Grant No K-146387. Alex Iosevich was partially supported by the National Science Foundation,  NSF DMS 2154232. Thang Pham was partially supported by ERC Advanced Grant no. 882971, ``GeoScape", and by the Erd\H{o}s Center.

\end{document}